\newcommand{\com}[1]{}
\newcommand{\R}{\mathbb{R}}
\newcommand{\N}{\mathbb{N}}
\newcommand{\Z}{\mathbb{Z}}
\newcommand{\E}{\mathbb{E}}
\newcommand{\Prob}{\mathbb{P}}
\newcommand{\Probb}{\mathbf{P}}
\newcommand{\ProbX}{\mathbb{P}_X}
\newcommand{\Ss}{\mathcal{S}}
\newcommand{\U}{\mathcal{U}}
\newcommand{\Unif}{\mathcal{U}\bigl([0,1]\bigr)}
\newcommand{\Unifd}{\mathcal{U}\bigl([0,1]^d\bigr)}
\newcommand{\opSuppP}{\mathring{\overbrace{\supp(\Probb)}}}
\newcommand{\PSpace}{(\Omega_0 \times \Omega,  {\Ss}_0\otimes \Ss,
\Prob_0\otimes\Prob)}
\newcommand{\Qpn}{Q^{\text{dq}}_{\norm{\cdot}, p, d}}
\newcommand{\VQpn}{Q^{\text{vq}}_{\norm{\cdot}, p, d}}
\newcommand{\limn}{\lim_{n\to\infty}}
\newcommand{\limsn}{\limsup_{n\to\infty}}
\newcommand{\limin}{\liminf_{n\to\infty}}
\newcommand{\Cie}{C_{i,\varepsilon}}
\newcommand{\JG}{\mathcal{J}_\Gamma}
\newcommand{\Fp}{F^p}
\newcommand{\Fbp}{\bar F^p}
\newcommand{\Gae}{\Gamma_{\!\!\alpha, \varepsilon}}
\newcommand{\gaei}{\Gamma_{\alpha_n, \varepsilon, i}}
\newcommand{\Gammai}{\Gamma_{\!i}}
\newcommand{\Gamman}{\Gamma_{\!n}}
\newcommand{\Gammaone}{\Gamma_{\!1}}
\newcommand{\Gammatwo}{\Gamma_{\!2}}
\newcommand{\Gammathree}{\Gamma_{\!3}}
\newcommand{\dqp}{d^p}
\newcommand{\dqpn}{\dqp_{n,p}}
\newcommand{\dqbp}{\bar d^p}
\newcommand{\dqbpn}{\dqbp_{n,p}}
\providecommand{\dqpnn}[1]{\dqp_{#1}}
\newcommand{\Dqp}{d_p}
\newcommand{\Dqpn}{d_{n,p}}
\newcommand{\Dqbpn}{\bar d_{n,p}}
\providecommand{\abs}[1]{\lvert#1\rvert}
\providecommand{\biggabs}[1]{\biggl\lvert#1\biggr\rvert}
\providecommand{\norm}[1]{\lVert#1\rVert}
\providecommand{\normdp}[1]{\norm{#1}_{d/(d+p)}}
\providecommand{\ind}[1]{\mathbbm{1}_{#1}}
\providecommand{\consLP}[1]{\text{s.t. } \left[  \begin{smallmatrix}
          x_1 & \ldots & x_n\\
          1 & \ldots & 1\\
        \end{smallmatrix}  \right] \lambda =
      \left[\begin{smallmatrix}
         #1\\ 1\\
        \end{smallmatrix} \right],\,
      \lambda \geq 0 }
\providecommand{\LP}[1]{
\underset{\consLP{#1}}{\min_{\lambda\in\R^n}\sum_{i=1}^n \lambda_i \, \norm{#1-x_i}^p}}
\def \Prod{\displaystyle\prod}
\DeclareMathOperator{\supp}{supp}
\DeclareMathOperator{\conv}{conv}
\DeclareMathOperator{\dist}{dist}
\DeclareMathOperator{\adim}{aff.dim}
\theoremstyle{plain}
\newtheorem{prop}{Proposition}
\newtheorem{lemma}{Lemma}
\newtheorem{thm}{Theorem}
\theoremstyle{remark}
\newtheorem*{remark}{Remark}
\title{Sharp rate for the dual quantization problem}
\author{Gilles Pag\`es\thanks{Laboratoire de Probabilit\'es et Mod\`eles al\'eatoires, UMR~7599, Universit\'e Paris 6, case 188, 4,
pl. Jussieu, F-75252 Paris Cedex 05. E-mail: {\tt  gilles.pages@upmc.fr}}
 \and Benedikt Wilbertz\thanks{Laboratoire de Probabilit\'es et Mod\`eles al\'eatoires, UMR~7599, Universit\'e Paris 6, case 188, 4,
pl. Jussieu, F-75252 Paris Cedex 05. E-mail: {\tt  benedikt.wilbertz@gmx.de}}
 }
\begin{document}
\maketitle

\begin{abstract}
In this paper we establish the sharp rate of the optimal dual quantization problem.
The notion of dual quantization was recently introduced in 
\cite{dualStat}, where it has been shown that, at least in a Euclidean setting, dual
quantizers are based on a Delaunay triangulation, the dual counterpart of the
Voronoi tessellation on which ``regular'' quantization relies.
Moreover, this new approach shares an intrinsic stationarity property, which
makes it very valuable for numerical applications.

We establish in this paper the counterpart for dual quantization of the
celebrated Zador theorem, which describes the sharp asymptotics for the
quantization error when the quantizer size tends to infinity.  On the way we establish an extension of the so-called Pierce Lemma by a random quantization argument. Numerical results confirm our choices. 

%
\end{abstract}

\bigskip
\noindent {\em Keywords: quantization, quantization rate, Zador's Theorem, Pierce's Lemma,
dual quantization, Delaunay triangulation, random quantization.}

\section{Introduction}
Starting with~\cite{cdo} and continued in~\cite{dualStat}, we introduced a new notion of vector quantization called {\em
dual quantization} (or  {\em Delaunay quantization}  in a Euclidean framework). We developed in~\cite{dualAppl} some first applications towards the design of numerical schemes for multi-dimensional optimal stopping and stochastic control problems arising in Finance (see also~\cite{BCC}).
In general, the principle of dual quantization consists of mapping an $\R^d$-valued random vector 
(r.v.) onto a non-empty finite subset (or {\em grid}) $\Gamma\subset \R^d$ using an appropriate random
splitting operator ${\cal J}_{\Gamma} : \Omega_0\times \R^d\to \Gamma$ (defined
on an exogenous probability space $(\Omega_0,\mathcal{S}_0, \Prob_0)$)
which satisfies the intrinsic {\em stationarity property}
\begin{equation}\label{1a}
\forall\, \xi \!\in \conv(\Gamma),\qquad \E_{\Prob_0}({\cal J}_{\Gamma}
(\xi))=\int_{\Omega_0} {\cal J}_{\Gamma} (\omega_0,\xi)
\,\Prob_0(d\omega_0)=\xi,
\end{equation}
where $\conv(\Gamma)$ denotes the convex hull of $\Gamma$ in $\R^d$. Every r.v.  $X:(\Omega,\mathcal{S}, \Prob)\to \conv(\Gamma) $ defined on a probability space   can be  canonically
extended to $(\Omega_0\times\Omega,\mathcal{S}_0\otimes\mathcal{S},
\Prob_0\otimes\Prob)$ in order to define  {\em dual quantization} induced by $\Gamma$ as
\[
\widehat X^{\Gamma, dual}(\omega_0, \omega) =   {\cal J}_{\Gamma} (\omega_0,X(\omega)).
\]
As a specific feature inherited from~\eqref{1a}, it always satisfies the {\em dual} or {\em reverse stationary property}
 \[ 
 \E_{\Prob\otimes\Prob_0}({\cal J}_{\Gamma} (X)\,|\,
X)=X. 
\] 
This 
can be compared to the more  classical Voronoi framework   where the $\Gamma$-quantization of $X$ is defined from a  Borel nearest neighbour projection ${\rm Proj}_{\Gamma}$ by
\[
\widehat X^{\Gamma, vor}( \omega) = {\rm Proj}_{\Gamma}(X(\omega)).
\]
The stationary property then reads: $\displaystyle \E(X\,|\, \widehat X^{\Gamma, vor})=\widehat X^{\Gamma, vor}$, except that it holds 
only for grids which are critical points (typically local minima) of the so-called distortion function (see $e.g.$~\cite{Foundations})
in a Euclidean framework.

To each  quantization is corresponds  a functional approximation operator:    Voronoi quantization is related to the {\em stepwise constant functional  approximation operator}~$f \!\circ {\rm Proj}_{\Gamma}$ whereas dual quantization leads  to an operator defined for every $\xi\!\in\conv(\Gamma)$ by
\begin{equation}\label{eq:JGAmma}
\mathbb{J}_{\Gamma}(f)(\xi)= \E_{\Prob_0}\big(f(J_{\Gamma}(\omega_0,\xi))\big)= \sum_{x\in \Gamma} f(x)\lambda_x(\xi),
\end{equation} 
where $\lambda_x(\xi)=\Prob_0(J_{\Gamma}(.,\xi)=x)$, $x\!\in \Gamma$, are barycentric ``pseudo-coordinates" of $\xi$ in $\Gamma$ satisfying $\lambda_x(\xi)\!\in [0,1]$, $\sum_{x\in \Gamma} \lambda_x(\xi) =1$ and $\sum_{x\in \Gamma} \lambda_x(\xi) x=\xi$. The operator $\mathbb{J}_{\Gamma}$  is an {\em interpolation} operator which turns out,  under appropriate conditions, to be  more regular (continuous and stepwise affine, see~\cite{dualAppl}) than the ``Voronoi" one.
 It is shown  in~\cite{dualStat,cdo,dualAppl} how we can take advantage of this intrinsic stationary property to  produce more accurate error bounds for the resulting  {\em cubature formula}
 \begin{equation}\label{eq:cubature}
\E_{\Prob}(f(\widetilde X^{\Gamma,dual}))=  \E_{\Prob}(\mathbb{J}_{\Gamma}(f)(X))=   \E_{\Prob\otimes\Prob_0} \big(f(J_{\Gamma}(\omega_0,\xi))\big)= \sum_{x\in \Gamma}w_x^{dual}f(x) 
 \end{equation}
where $w_x^{dual} = \E_{\Prob}(\lambda_x(X))= \Prob\otimes\Prob_0(J_{\Gamma}(\omega_0,X)=x)$, $x\!\in \Gamma$, regardless of any optimality property  $\Gamma$ with respect to   $\Prob_{X}$.   Typically, if  $f\! \in {\rm Lip}(\R^d,\R)$ (Lipschitz continuous function)  with  coefficient~$[f]_{\rm Lip}$, 
\begin{eqnarray*}
  \Big|\E_{\Prob}f(X)-\E_{\Prob\otimes\Prob_0}f(\widetilde X^{\Gamma,dual})\Big|&\le& [f]_{\rm Lip}\big\|X-\widehat X^{\Gamma,dual}\big\|_{L^1(\Prob\otimes\Prob_0)}\\&=&  [f]_{\rm Lip}\E_{\Prob \otimes \Prob_0}\big(\|X-J_{\Gamma}(\omega_0,X)\| \big)   \\
  &=& [f]_{\rm Lip}\E_{\Prob \otimes \Prob_0}\big(\E_{\Prob \otimes \Prob_0}(\|X-J_{\Gamma}(\omega_0,X)\|  \,|\,X)  \big)   
\end{eqnarray*}
whereas, if $f$ has Lipschitz continuous differential (the norm on $\R^d$ is denoted $\|\,.\,\|$), a second order Taylor expansion yields
\begin{eqnarray}
\nonumber    \Big|\E_{\Prob}f(X)-\E_{\Prob\otimes\Prob_0}f(\widetilde X^{\Gamma,dual})\Big|&\le& \Big\|f(X) -\E_{\Prob \otimes \Prob_0}\big(f(J_{\Gamma}(\omega_0,X))\,|\,X\big) \Big\|_{L^1(\Prob\otimes\Prob_0)}    \\
\nonumber     &\le& [ Df]_{\rm Lip}\E_{\Prob \otimes \Prob_0}\big(\|X-J_{\Gamma}(\omega_0,X)\|^2  \big)  \\
  &\le& [ Df]_{\rm Lip}\E_{\Prob \otimes \Prob_0}\big(\E_{\Prob \otimes \Prob_0}(\|X-J_{\Gamma}(\omega_0,X)\|^2  \,|\,X)  \big)   \label{eq:fstatiodual}
\end{eqnarray}
where $\displaystyle \E_{\Prob\otimes\Prob_0}(\|X-J_{\Gamma}(\omega_0,X)\|^p\,|\,X)= \sum_{x\in\Gamma}\lambda_x(X)\|X-x\|^p= \mathbb{J}_{\Gamma}(\|.\|^2)(X),\; p=1,2$.

More generally, if one aims at approximating $\E\big (f(X)\,|\, g(Y) \big)$ by its dually quantized counterpart $\E_{\Prob\otimes\Prob_0\otimes\Prob_1}\big (f(J_{\Gamma_X}(\omega_0,X))\,|\, J_{\Gamma_Y}(\omega_1,Y) \big)$  (with obvious notations),  it is also possible under natural additional assumptions  to get  error bounds based on both related dual quantization error moduli, see e.g. the proof (Step~2) of ~Proposition~2.1 in~\cite{dualAppl}.

This suggests  to investigate the properties and the asymptotic    behaviour  of the $(\Gamma, L^p)$-mean dual quantization error, $p\!\in (0,\infty)$, defined by
$$
\Big\|X-\widehat X^{\Gamma,dual}\Big\|^p_{L^p(\Prob\otimes\Prob_0)}= \Big\|X-J_{\Gamma}(\omega_0,X)\Big\|^p_{L^p(\Prob\otimes\Prob_0)}= \E_{\Prob\otimes\Prob_0} \Big( \E_{\Prob\otimes\Prob_0}\big(\|X-J_{\Gamma}(\omega_0,X)\|^p\,|\, X\big) \Big)
$$ 
so as to make it as small as possible. This program can be summed up in four phases:

\smallskip
-- The first step is to minimize the above conditional expectation, $i.e.$  $\E(\|\xi-J_{\Gamma}(\omega_0,\xi)\|^p)$ for every $\xi \!\in\conv(\Gamma)$, for a fixed grid $\Gamma$ $i.e.$ to determine the {\em best}  splitting random operator $J_{\Gamma}$. In a  regular quantization, this phase corresponds to showing that the nearest neighbour projection on $\Gamma$ is   the best projection on $\Gamma$.

\smallskip
-- The second step is ``optional" . It aims at finding grids which minimize the
mean dual quantization error $\Big\|X-J_{\Gamma}(\omega_0,X)\Big\|_{L^p(\Prob\otimes\Prob_0)}$ {\em among all grids $\Gamma$ whose convex hull contains the support of the distribution of $X$} or equivalently such that $\Prob(X\!\in \conv(\Gamma))=1$.

\smallskip
-- The third step is to extend  dual quantization to r.v.s $X$ with unbounded support while  the performances of the resulting  cubature formula (see~\eqref{eq:fstatiodual}), having in mind that  the stationarity can no longer holds.

\smallskip
The first two steps have  been already solved in~\cite{dualStat}. We discuss  in-depth the third one in Section~\ref{sec:motiv}). The aim of this paper is to solve the fourth and last  step:   elucidate  is the rate of  decay  to $0$ of the   {\em optimal} $L^p$-mean dual quantization error modulus, $i.e.$ minimized over all grids $\Gamma$ of size at most $N$ -- as $N$ grows to infinity.

This is the  to establish  in a dual
quantization framework the counterpart of   Zador's Theorem  which
rules the convergence rate of  optimal ``regular" (Voronoi) quantization and is
recalled below. 
To be more precise, we will  establish such a  theorem, for $L^{\infty}$-bounded r.v.s but also, 
once mean dual quantization error will have been extended in an appropriate way
following~\cite{dualStat}, to general r.v.s.

\smallskip
Let us now introduce in more formal way   the (local and mean) dual quantization error moduli,  following~\cite{dualStat}.  For a grid $\Gamma\subset\R^d$, we define  the {\em $L^p$-mean dual quantization  error} of  $X$ induced by the grid $\Gamma $ by
\begin{equation}\label{def:dpGamma}
d_{p}(X; \Gamma)= \|F_p(X;\Gamma)\|_{L^p(\Prob)}
\end{equation}
where $F_p$ denotes the {\em  local dual quantization error} function  defined by
\begin{eqnarray}
\label{eq:Fp} F_p(\xi;\Gamma)&=&\inf\left \{  \Big(\sum_{x\in \Gamma} \lambda_x \|\xi-x\|^p
\Big)^{\frac 1p},\; \lambda_x\!\in [0,1],\, \sum_{x\in \Gamma}
\lambda_x\, x =\xi,\,
\sum_{x\in \Gamma} \lambda_x=1\right \} 
\end{eqnarray}
Note that $F_p(\xi;\Gamma)<+\infty$ if and only if $\xi\!\in \conv(\Gamma)$ so that  $d_{p}(X; \Gamma)<+\infty $ if and only if $ X\!\in \conv(\Gamma)$ $\Prob$-$a.s.$. and that $d_{p}(X; \Gamma)= \Big\|X-\widehat X^{\Gamma,dual}\Big\|^p_{L^p(\Prob\otimes\Prob_0)}$. Hence, this notion only makes sense for compactly supported r.v.s. In particular if the support of $\Prob_{_X}$ is compact and contains $d+1$ affinely independent points, $d_{n,p}(X,\Gamma)=+\infty$ as long as $n\le d$.This new quantization modulus leads to an optimal dual quantization problem at {\em level} $N$, 
\begin{equation}\label{def:dnpsharp}
d_{n,p}(X)\!=\!\inf\Big\{d_{n,p}(X, \Gamma),\, \Gamma\!\subset \R^d, \; |\Gamma|\le
n\Big\}
\!=\!\inf\Big\{\|F_p(X;\Gamma)\|_p,\, \Gamma\!\subset \R^d, \; |\Gamma|\le
n\Big\}.
\end{equation}

One important application of quantization in general is the use of quantization grids as numerical cubature formula (see~\eqref{eq:cubature}). 
The main feature here is the stationarity which allows to derive a second order formula for the integration error. 
Since, by construction, dual quantization can  achieve stationarity only on a compact set, we show in section 2.2 that the extension of dual quantization to non-compactly supported random variables as defined in~\cite{dualStat} preserves this second order rate on the whole support of the r.v.

We therefore define the splitting operator $\mathcal{J} _{\Gamma}$ outside $\conv(\Gamma)$ by setting

\[ 
\forall\,\xi \in \R^d\setminus
\conv(\Gamma),\quad \mathcal{J}_{\Gamma}(\omega_0,\xi)= {\rm
Proj}_{\conv{(\Gamma)}\cap \partial\Gamma}(\xi) 
\] 
where $ {\rm Proj}_{\conv{(\Gamma)}\cap \partial\Gamma}$ is a Borel nearest neighbour projection on $\conv{(\Gamma)}\cap \partial\Gamma$. This choice is not unique: an alternative extension could be  to set   $\mathcal{J}_{\Gamma}(\omega_0,\xi)={\rm Proj}_{\conv(\Gamma)}(\xi)$. 
But the above  choice is   tractable in terms of simulation and we will prove that it does not deteriorate  the resulting mean error when $|\Gamma|\to +\infty$. Though the stationary property is lost as expected, we point out  in Section~\ref{sec:motiv} that this operator remains as performing as $\mathcal{J}_{\Gamma}$ is for bounded r.v.s when implementing cubature formulas for unbounded r.v.s. 

%

Then, we to derive the {\em extended  local dual quantization error} function by
\begin{equation}\label{def:Fbarp}
\bar F_p(\xi;\Gamma):= F_p(\xi;\Gamma)\,\mbox{\bf 1}_{\conv(\Gamma)}(\xi)+{\rm
dist}(X,\Gamma)\,\mbox{\bf 1}_{\conv(\Gamma)^c}(\xi),
\end{equation}
and  the {\em  extended $L^p$-mean dual quantization error} of $X$
induced by  $\Gamma$ by  
\begin{equation}\label{def:dbarpGamma}
\bar d_{p}(X; \Gamma)= \|\bar F_p(X;\Gamma)\|_{L^p(\Prob)}.
\end{equation}
Finally, we define   the {\em extended $L^p$-mean dual quantization error}  at level $n$ given by
\begin{equation}\label{def:dbarnp}
\bar d_{n,p}(X)=\inf \Big\{\bar d_p(X,\Gamma),\; \Gamma\!\subset \R^d, \;
|\Gamma|\le n\Big\}.
\end{equation}


Finally, we briefly recall a few facts about  the (regular) {\em Voronoi optimal quantization problem} at level $n$ associated to the nearest neighbour projection ${\rm Proj}_{\Gamma}$: it reads
\begin{equation}\label{def:enp}
e_{n,p}(X)=\inf \left\{ \| {\rm dist}(X,\Gamma)\|_{L^p(\Prob)} ,\; \Gamma\subset \R^d,\, |\Gamma|\le n\right\}
\end{equation}
(where ${\rm dist}(x,A)= \inf_{a\in A}\|x-a\|$). It is well-known that $e_{n,p}(X)\downarrow 0$ as soon as $n\to +\infty$ and
$X\!\in L^p(\Prob)$. Moreover, the rate of convergence to $0$ of
$e_{n,p}(X)$ is ruled by  Zador's  Theorem (see~\cite{Foundations}).

\begin{thm}[Zador]
Let $X\!\in L_{\R^d}^{p'}(\Prob)$, $p'\!>\!p$. Let $\ProbX= h.\lambda_d +\nu$, $\nu\perp\lambda_d$ be the distribution of $X$ where $\lambda_d$ denotes the Lebesgue measure on $(\R^d,{\cal B}or(\R^d))$.
Then
\[
\limn n^{\frac 1d} e_{n,p}(X) =
\VQpn\, \|h\|_{\frac{d}{p+d}}^{\frac 1p}
\] 
where $ \|h\|_{\frac{d}{p+d}}=\displaystyle \Big(\int_{\R^d} h(\xi)^{\frac{d}{p+d}}d\xi\Big)^{1+\frac pd}$ and $\displaystyle \VQpn= \inf_{n} n^{\frac 1d}\,e_{n,p}(U([0,1]^d))\!\in
(0,\infty)$.
\end{thm}

The above  rate depends on $d$ and  is known as the {\em curse of dimensionality}.
 Its statement  and proof goes back to Zador (PhD, 1954) for the uniform distributions on hypercubes, its 
extension to   absolutely  continuous distributions is due to 
Bucklew and Wise in~\cite{bucklew}. A first general rigor proof 
 (according to mathematical  standards) was provided 
in~\cite{Foundations} in 2000 (see also \cite{GRAY} for a survey of the history of quantization).

\smallskip
It should be noted that $d_{n,p}(X)$ and $\bar d_{n,p}(X)$ do not coincide even for bounded r.v.s. We will extensively use (see~\cite{dualStat}) that
\[
 d_{n,p}(X)\ge  \bar d_{n,p}(X)\ge e_{n,p}(X).
\]

This paper is entirely devoted to establishing the sharp asymptotics of the
optimal dual quantization error moduli $d_{n,p}(X)$ and $\bar d_{n,p}(X)$ as $n$ goes to infinity. 
The main result is stated in Theorem~\ref{thm:DQRate} (Zador's like theorem)
(see Section~\ref{mains} below). 
Proposition~\ref{PdtQErrop} (a Pierce like Lemma) is a companion result
which provides a non-asymptotic upper bound for the exact rate simply involving  
moments of the r.v. $X$ (higher than $p$).  Our   proof   has the same structure  as that of the original Zador  Theorem (see~$e.g.$~\cite{Foundations} where it has been rigorously completed for   the first time), except that the  splitting operator $\mathbb{J}_{\Gamma}$ is much more demanding to handle than the plain nearest neighbour projection: it requires more sophisticate arguments borrowed from convex analysis (including dual primal/methods) and geometry,  both in  a probabilistic framework.  In one dimension the exact rate $O(n^{-1})$ for $d_{n,p}(X)$ and $\bar d_{n,p}(X)$ follows from a random quantization argument detailed in Section~\ref{Pierce} (extended Pierce Lemma for $d_{n,p}(X)$).
This rate can be transferred in a $d$-dimensional framework  to  $O(n^{-\frac
1d})$  using  a product (dual) quantization
argument (see~Proposition~\ref{prop:asymptQ} below and Section~\ref{sec:localProperties}). Finally, the sharp upper bound
is obtained in Section~\ref{sec:rate} by successive approximation procedures of
the density of $X$,  whereas the lower bound relies on a new ``firewall" Lemma.

\smallskip
\noindent {\sc Notations}:  $\bullet$ $\conv(A)$ stands for the convex hull of $A\subset \R^d$,
$\abs{A}$ for its cardinality, ${\rm diam}_{\norm{.}}(A)=\sup_{x,y\in A}\norm{x-y}$ for its diameter and ${\rm aff.dim}(A)$ for the dimension of the affine subspace of $\R^d$ spanned by $A$. 

\noindent  $\bullet$ We denote 
$ \big(\begin{smallmatrix} n\\ i \end{smallmatrix}\big):=\frac{n!}{i!(n-i)!}, \; n,\, i\!\in \{0,\ldots,n\}$, $n\!\in \mathbb N$.
 
\noindent $\bullet$ $\lfloor x \rfloor$ and $\lceil x\rceil$ will denote the lower and the upper
integral part of the real number $x$ respectively; set likewise
$x_{\pm}=\max(\pm x,0)$.  For two
sequences of real numbers $(a_n)$ and $(b_n)$,  $a_n\sim b_n$ if $a_n =u_n b_n$ with $\lim_n u_n =1$.

\noindent $\bullet$  For every $x=(x^1,\ldots,x^d)\!\in \R^d$, $|x|_{\ell^r}=(|x^1|^r
+\cdots|x^d|^r)^{1/r}$ denotes the $\ell^r$-norm or pseudo-norm, $0<r<+\infty$  and $|x|_{\ell^{\infty}}=\max_{1\le i\le d}|x_i|$ denotes the $\ell^{\infty}$-norm. A general norm on $\R^d$ will be denoted $\norm{\cdot}$.
 
\noindent $\bullet$ ${\rm supp}(\mu)$ denotes the support of a distribution $\mu$ on $(\R^d,{\cal B}or(\R^d))$. 

\section{Main results and motivation for extended dual quantization}
\subsection{Main results}\label{mains} 
The theorem below establishes  for any $p>0$ and any norm on $\R^d$
the counterpart of Zador's Theorem in the framework of dual quantization for both
$d_{n,p}$ and $\bar d_{n,p}$   error moduli.

\begin{thm}\label{thm:DQRate} $(a)$ 
Let $X\!\in L_{\R^d}^{\infty}(\Prob)$. Assume the distribution $\ProbX$ of
$X$ reads $\ProbX= h.\lambda_d+\nu$, $\nu\perp\lambda_d$. Then
\[
\limn n^{\frac 1d}\, d_{n,p}(X) 
= \limn n^{\frac 1d}\, \bar d_{n,p}(X)
= \Qpn\,\|h\|_{\frac{d}{p+d}}^{\frac 1p}
\] 
where $\displaystyle \Qpn= \inf_{n\ge 1} n^{\frac 1d}\, d_{n,p}(U([0,1]^d))\!\in
(0,\infty)$.

\noindent $(b)$  Let $X\!\in L_{\R^d}^{p'}(\Prob)$, $p'>p$. Assume the
distribution $\ProbX$ of $X$ reads $\ProbX= h.\lambda_d+\nu$,
$\nu\perp\lambda_d$. Then
\[
\limn n^{\frac 1d}\, \bar d_{n,p}(X) 
= \Qpn\,\|h\|_{\frac{d}{p+d}}^{\frac 1p}.
\] 
\noindent $(c)$ If $d=1$, then 
\[
d_{n,p}(U([0,1])) = \biggl(\frac{2}{(p+1)(p+2)} \biggr)^{\frac 1p}
\frac{1}{n-1},
\]  
which implies $Q^{dq}_{|\,.\,|,p,1}=\left(\frac{2^{p+1}}{p+2}\right)^{\frac
1p}Q^{vq}_{|\,.\,|,p,1}$.
\end{thm}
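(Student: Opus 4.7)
The plan is to follow the three-step architecture of the classical Zador theorem: (i) show that $\Qpn := \limn n^{1/d}\Dqpn(U([0,1]^d))$ exists in $(0,\infty)$; (ii) deduce the sharp rate for bounded absolutely continuous distributions by combining a ``firing-squad'' upper bound with a matching lower bound obtained through partitioning into small cubes; (iii) remove the boundedness assumption via an extended Pierce-type lemma. Part (c) should be handled first, since it is purely computational and fixes the one-dimensional value of $\Qpn$.

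\textbf{Part (c).} In dimension one, any grid $\Gamma$ producing a finite error on $U([0,1])$ must contain the endpoints $\{0,1\}$, and the linear program defining $F_p(\xi;\Gamma)$ is attained by the barycentric combination of the two neighbours of $\xi$ in $\Gamma$ (the one-dimensional Delaunay simplex). Parametrising $\xi = x_i + t(x_{i+1}-x_i)$ on each interval of length $\Delta_i$, an elementary computation gives $\int_{x_i}^{x_{i+1}} F_p^p(\xi;\Gamma)\,d\xi = \frac{2}{(p+1)(p+2)}\,\Delta_i^{p+1}$. Convex minimisation of $\sum_i \Delta_i^{p+1}$ under $\sum_i \Delta_i = 1$ with $n-1$ intervals yields the equidistant grid and the announced closed-form formula; the comparison with $\VQpn$ is then trivial arithmetic.

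\textbf{Existence of $\Qpn$ and rate for bounded $X$.} To prove that $n^{1/d}\Dqpn(U([0,1]^d))$ converges, I would tile $[0,1]^d$ into $m^d$ congruent sub-cubes, equip each of them with a copy of a $k$-point near-optimal grid rescaled by $1/m$ and translated into place, and observe that the union is admissible of cardinality $km^d$ for $U([0,1]^d)$. Additivity of $\int F_p^p$ over disjoint cells (which holds because the optimal LP weights at $\xi$ are supported in a single Delaunay simplex lying inside one cell) together with the scaling $\Dqpn(U([0,1/m]^d)) = m^{-1}\Dqpn(U([0,1]^d))$ gives $d_{km^d,p}(U) \le m^{-1}d_{k,p}(U)$; a Fekete-type argument then produces the limit, with positivity ensured by $\Dqpn \ge e_{n,p}$. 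For the upper bound in (a), apply the classical ``firing-squad'' scheme: discretise $h$ on a fine cube partition, allocate $n_i \sim n\cdot h(C_i)^{d/(p+d)}$ near-optimal points to each cube $C_i$, paste the grids together, and invoke H\"older's inequality in its equality form to obtain $\limsn n^{1/d}\Dqpn(X) \le \Qpn\|h\|_{d/(p+d)}^{1/p}$. For the matching lower bound, fix an asymptotically optimal sequence $\Gamma_n$, localise $\Dqpn(X)^p$ on the cells of an auxiliary partition, compare each localised error with a uniform-distribution dual-quantization error on the cell via rescaling, and recover $\|h\|_{d/(p+d)}$ by reverse H\"older. A subtle point is that any asymptotically optimal $\Gamma_n$ must essentially cover $\supp(h)$, otherwise $F_p = +\infty$ on a set of positive mass; the same remark identifies the limits of $\Dqpn$ and $\Dqbpn$ for bounded $X$.

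\textbf{Extension to $L^{p'}$ and main obstacle.} For part (b), truncate $X$ to a ball of large radius $R$, apply part (a) to the truncated variable, and control the tail via the extended Pierce lemma of the form $\Dqbpn(X) \le C_{p,p',d}\,n^{-1/d}\|X\|_{p'}$ for $X \in L^{p'}$, $p'>p$; letting $R \to \infty$ then recovers the factor $\|h\|_{d/(p+d)}^{1/p}$ by dominated convergence. The central obstacle of the whole proof is precisely this Pierce-type lemma, because, unlike in the Voronoi setting, it is not enough to place grid points near $X$: they must also convex-hull cover $X$ on an event of sufficient mass, otherwise $\bar F_p$ collapses to the slow $\dist$ term and the prefactor $n^{-1/d}$ is lost. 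The strategy, as announced in the abstract, is the random-quantization one: draw i.i.d.\ grid points from a carefully chosen auxiliary distribution, show that with sufficiently high probability the random convex hull captures $X$ and that the expected $\bar F_p^p$ admits the desired $n^{-p/d}$ decay with a constant depending only on $\|X\|_{p'}$, and extract a deterministic admissible grid by the probabilistic method.
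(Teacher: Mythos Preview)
Your architecture is right, but there is a genuine gap at the lower-bound step, which is exactly where the paper's main new idea sits. You write ``localise $d_{n,p}(X)^p$ on the cells of an auxiliary partition, compare each localised error with a uniform-distribution dual-quantization error on the cell via rescaling''. This does not go through as stated: unlike nearest-neighbour quantization, for $\xi\in C_i$ the optimal LP defining $F_p(\xi;\Gamma_n)$ may put mass on grid points \emph{outside} $C_i$ --- the Delaunay simplex of $\xi$ need not respect your partition --- so $\int_{C_i}F^p(\xi;\Gamma_n)\,d\Probb$ is \emph{not} bounded below by $d^p_{|\Gamma_n\cap C_i|,p}(\U(C_i))$. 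The paper closes this with a \emph{Firewall Lemma}: one lays an auxiliary lattice $\Gamma_{\alpha,\varepsilon}$ of mesh $\alpha$ in a thin boundary layer $C_i\setminus C_{i,\varepsilon}$ and proves, for $\xi\in C_{i,\varepsilon}$,
\[
F^p(\xi;\Gamma)\ \ge\ (1+\eta)^{-p}\,F^p\bigl(\xi;(\Gamma\cap\mathring C_i)\cup\Gamma_{\alpha,\varepsilon}\bigr)\ -\ C_\eta\,\alpha^p,
\]
by sliding each exterior point $x_j$ radially toward $\xi$ until it hits the boundary layer and then re-expanding it on the firewall lattice. With $\alpha_n\asymp n^{-1/d}$ both corrections are $O(n^{-p/d})$, and only after this can one invoke $d_{n_i,p}(\U(C_i))$ with $n_i=|\Gamma_n\cap\mathring C_i|+|\Gamma_{\alpha_n,\varepsilon}|$ and optimise over the $n_i/n$. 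Your reverse-H\"older intuition alone does not manufacture this localisation.

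Three smaller points. First, you never treat the singular part $\nu$; the paper needs a separate argument that $n^{1/d}d_{n,p}(\nu)\to 0$ for compactly supported singular $\nu$, via a dyadic cube covering of a null carrier of $\nu$. Second, your upper-bound plan for (b) (``truncate, apply (a), control the tail by Pierce'') collides with the fact --- stressed in the paper --- that $\bar d_{n,p}$ is \emph{not} sub-additive in the measure, so you cannot simply add the inner and outer contributions; the paper again uses a firewall layer around $\partial C_k$ to glue an inner grid on $C_k$ to an outer grid on $C_k^c$ while keeping $F^p$ (not $\bar F^p$) in control on the interface. Third, your Pierce-lemma sketch (random $d$-dimensional convex-hull coverage) is considerably harder than what is actually done: the paper proves a one-dimensional extended Pierce lemma using the order statistics of i.i.d.\ Pareto variables --- where the convex hull is just the interval $[1,Y_{(n)}]$ and spacings have explicit Beta moments --- and then lifts it to $\R^d$ by a product-quantizer argument, bypassing any random-hull estimate in $\R^d$.
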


Moreover, we will also establish in Section~\ref{sec:rate} an upper bound for the
dual quantization coefficient $\Qpn$ when $\norm{\cdot} =
\abs{\cdot}_{\ell^r}$.
\begin{prop}[Product quantization]\label{prop:asymptQ}
Let $r,p\in[1,\infty)$ with $r\leq p$.
Then it holds for every $d\in\N$
\[
	Q^{\text{dq}}_{\abs{\cdot}_{\ell^r}, p, d} \leq d^{\frac{1}{r}}\cdot
	Q^{\text{dq}}_{\abs{\cdot}, p, 1}
\]
where $|\,.\,|$ denotes standard absolute value on $\R$.\end{prop}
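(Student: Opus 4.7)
The plan is to tensorize the one-dimensional construction via a product-grid argument. Fix $m\in\N$ and let $\Gamma_1\subset[0,1]$ be an $m$-point grid (near-)realizing the infimum defining $d_{m,p}(\Unif)$ in the $d=1$, norm $\abs{\cdot}$ setting. Form the product grid $\Gamma:=\Gamma_1^d\subset[0,1]^d$, of cardinality $n=m^d$ and convex hull $[0,1]^d$, so that $\Unifd$ is admissible for $\Gamma$.

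\smallskip
For each $\xi=(\xi_1,\ldots,\xi_d)\in[0,1]^d$ and each $i\in\{1,\ldots,d\}$, pick weights $(\lambda^{(i)}_{x_i})_{x_i\in\Gamma_1}$ (nearly) achieving the infimum defining $F_p(\xi_i;\Gamma_1)$. A direct check of the marginal identities shows that the product weights $\mu_{(x_1,\ldots,x_d)}:=\prod_{i=1}^d \lambda^{(i)}_{x_i}$ form a valid splitting of $\xi$ on $\Gamma$, yielding the pointwise upper bound
\[
F_p(\xi;\Gamma)^p \le \sum_{(x_1,\ldots,x_d)\in\Gamma}\Bigl(\prod_{i=1}^d \lambda^{(i)}_{x_i}\Bigr)\Bigl(\sum_{j=1}^d\abs{\xi_j-x_j}^r\Bigr)^{p/r}.
\]

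\smallskip
The main step is now a double application of Minkowski's inequality, which is exactly where the hypothesis $p/r\ge 1$ enters. First, viewing $\mu$ as the law of an independent tuple $(Y_1,\ldots,Y_d)$ with $Y_j\sim\lambda^{(j)}$, Minkowski in $L^{p/r}(\mu)$ applied to the nonnegative summands $\abs{\xi_j-Y_j}^r$ gives
\[
F_p(\xi;\Gamma)^p \le \Bigl(\sum_{j=1}^d F_p(\xi_j;\Gamma_1)^r\Bigr)^{p/r}.
\]
Specializing to $\xi=X\sim\Unifd$ with i.i.d.\ $\Unif$-components $X_1,\ldots,X_d$, taking $L^p(\Prob)$-norms, and applying Minkowski a second time in $L^{p/r}(\Prob)$ together with the exchangeability of the $X_j$, one obtains
\[
\norm{F_p(X;\Gamma)}_p^p \le d^{p/r}\,\norm{F_p(X_1;\Gamma_1)}_p^p.
\]
Minimizing over $\Gamma_1$ of size $m$ then gives $d_{m^d,p}(\Unifd)\le d^{1/r}\,d_{m,p}(\Unif)$.

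\smallskip
Multiplying by $(m^d)^{1/d}=m$ and letting $m\to\infty$ along the subsequence $n=m^d$ (the full limit exists by Theorem~\ref{thm:DQRate}(a), so any subsequence converges to the same value) yields the claim. The only nonroutine point is the placement of the two Minkowski inequalities; both crucially use $p\ge r$, which is the standing assumption, while the rest is bookkeeping.
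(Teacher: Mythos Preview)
Your argument is correct, and the overall architecture (product grid $\Gamma=\Gamma_1^d$, $n=m^d$, then pass to the limit) matches the paper's. The technical route, however, differs. The paper does not work directly with the $\ell^r$ norm: it first invokes the norm comparison $\abs{x}_{\ell^r}\le d^{1/r-1/p}\abs{x}_{\ell^p}$ to reduce to the $\ell^p$ case, and then applies the \emph{exact} product identity of Proposition~\ref{prop:product} (valid precisely for $\norm{\cdot}=\abs{\cdot}_{\ell^p}$), which gives $n^{p/d}d^p_{n,\abs{\cdot}_{\ell^p}}(\Unifd)\le d\,m^p d^p_m(\Unif)$; combining the two factors $d^{p/r-1}$ and $d$ yields $d^{p/r}$. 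Your approach instead stays with the $\ell^r$ norm throughout and replaces the exact product formula by two applications of Minkowski's inequality in $L^{p/r}$ (first over the product splitting law, then over $\Prob$), exploiting $p/r\ge 1$ at each step. Both routes land on the same constant; the paper's is slightly shorter because it recycles Proposition~\ref{prop:product}, while yours is more self-contained and makes transparent exactly where the hypothesis $r\le p$ is used.
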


Since this upper bound achieves the same asymptotic rate as in the case of
regular quantization (cf. Corollary~9.4 in~\cite{Foundations}), this suggests the rate
 $O(d^\frac{1}{r})$ to be also the true one for $\Qpn$ as $d\to\infty$.

As a step towards the above sharp rate theorem, we  also establish a counterpart
of the so-called Pierce Lemma (as stated in an operating form $e.g.$ 
in~\cite{meanRegular}). 
In practice, it turns out to be quite  useful for applications since it provides 
non-asymptotic error bounds which  only depend on the moments of the r.v. $X$
and the size of the optimal grid as emphasized in~\cite{dualAppl} (see section
\ref{sec:ddimBound} for the proof).

 \begin{prop}[$d$-dimensional extended Pierce Lemma] \label{PdtQErrop} $(a)$ Let
 $p,\,\eta>0$. There exists 
 a real constant $C_{d, p,\eta}>0$ such that, for every $n\ge 1$
and every r.v.  $X\!\in L_{\R^d}^{p+\eta}(\Omega,{\cal A}, \Prob)$, 
 \[ 
 \bar d_{n,p}(X)\le C_{d,p,\eta}\sigma_{p+\eta,\|.\|}(X)\,n^{-1/d}
 \] 
 where $\sigma_{p+\eta,\|.\|}(X)= \inf_{a\in \R^d} \|X-a\|_{L^{p+\eta}}$ denotes the $L^{p+\eta}$-pseudo-standard deviation of $X$. 
 
 \smallskip
\noindent $(b)$  If $\supp(\ProbX)$ is compact then there exists 
 a real constant $C'_{d, p,\eta}>0$ such that, for every $n\ge 1$
 \[
 d_{n,p}(X)\le C'_{d,p,\eta}{\rm diam}_{\norm{.}}({\rm supp}(\Prob_{_X}))\,n^{-1/d}. 
 \]
\end{prop}


\subsection{How to use the extended $L^p$-dual quantization error modulus?} \label{sec:motiv}
%

We  briefly explain why the extended dual quantization error modulus,  already been  introduced in~\cite{dualStat} for non-compactly supported distributions, is the right  tool to {\em perform automatically an optimized truncation} of non-compactly supported distributions.  basically, it uses its  additional  ``outer Voronoi projection"   as a {\em penalization term}   which expands automatically the convex hull of the dually optimal   grid at its appropriate ``amplitude", making altogether   the distribution outside of its  convex hull  ``negligible"   and sharing  an optimal  rate of decay $n^{-\frac  1d}$ as its size $n$ goes to infinity. The specific choice of a Voronoi quantization among other possible  solutions  for this penalization is motivated by both  its   theoretical tractability and its simple implementability in stochastic grid optimization algorithms.
This feature if of the highest importance for numerical integration or conditional execration approximation. This is the main motivation  to introduce and deeply  investigate  the sharp asymptotics of  this   $L^p$-mean extended dual quantization error modulus  $\bar d_{n,p}(X)$. 

We saw in~\cite{dualStat} that {\em Euclidean  dual quantization}  of a compactly supported distribution produces {\em  stationary} (dual) quantizers, namely r.v.s $\widehat X^{dual}$ satisfying $\E(\widehat X^{dual}\,|\, X)=X$, so that (see Proposition~?? in~\cite{dualStat}), dual quantization based cubature formula induce on functions $f\!\in {\cal C}_{\rm Lip}^1(\R^d,\R)$ (Lipschitz functions with Lipschitz continuous gradient) an error at most equal to $[D f]_{\rm Lip}d_{2,n}(X)^2$. Taking into account the rate established in Theorem~\ref{thm:DQRate}$(a)$, this yields  a $O(n^{-\frac{2}{d}})$ error rate.

There is no way to extend dual quantization to (possibly) unbounded r.v.s so that it preserves the above stationarity property. However, with the choice we made (nearest neighbor projection on the  grid outside its convex hull), natural heuristic arguments strongly suggest that the above order $O(n^{-\frac{2}{d}})$ is still satisfied for functions in  $ {\cal C}_{\rm Lip}^1(\R^d,\R)$.

We consider an  unbounded Borel  distribution $\mu=\Probb_{_X}$ of an $\R^d$-valued r.v. $X$. Let $\Gamma_{n}$ be an {\em  Euclidean $L^2$-optimal  extended} dual quantization grid of size $n$ for $\mu$ (see~\cite{dualStat} or  Theorem~\ref{thm:existence}) and $\widehat X^{dual}$
the resulting $\Gamma_n$-valued extended dual quantization of $X$. Let $C_{n}= \conv\big(\Gamma_{n}\big) $ denote the  convex hull of $\Gamma_n$. It is clear by construction of $\widehat X^{dual}$  that $\widehat X^{dual}\!=\! \widetilde X^{dual}
+\!\widetilde X^{vor}$
where, with obvious notations, 
\[
\mbox{\bf 1}_{\{X\in C_{n}\}} \E\big(\widetilde X^{dual}|X\big) = \mbox{\bf 1}_{\{X\!\in C_{n}\}} X\; \mbox{ (dual stationarity) and }\;  \widetilde X^{vor}= {\rm Proj}_{\Gamma_n \cap C_n}(X).
\]
Hence, if $f\!\in {\cal C}_{\rm Lip}^1(\R^d,\R)$, $ \E\big((D f(X)|X-\widetilde X^{dual}) |X\!\in C_{n}\big)=0$ and 
\begin{eqnarray*}
\left|\E\Big(f\big(\widetilde X^{dual}\big) |X\!\in C_{n}  \Big)\!-\!\E\Big(f\big( X\big)|X\!\in C_{n}\Big)\right|&\!=\!&\left| \E\left(f\big(\widetilde X^{dual}\big)\!-\!f\big( X\big) \!-\!D f(X).(X\!-\!\widetilde X^{dual}) |X\!\in C_{n}\right)\right|\\
&\!\le\! & [D f]_{\rm Lip}d_{n,2}(\Gamma_{n},\widetilde X^{dual}| X\!\in C_{n})^2.
\end{eqnarray*}
Consequently, 
\begin{eqnarray*}
\left|\E\Big(f\big(\widetilde  X^{dual}\big)\mbox{\bf 1}_{\{X\in C_{n}\}} \Big) -\E\Big(f\big( X\big)\mbox{\bf 1}_{\{X\in C_{n}\}} \Big)\right|&\le&  [D f]_{\rm Lip}d_{n,2}(\widetilde X^{dual},\Gamma_{n})^2/\Probb( X\!\in C_{n})\\ &\le&  [D f]_{\rm Lip}\bar d_{n,2}(X,\Gamma_{n})^2/\Probb( X\!\in C_{n}).
\end{eqnarray*}
On the other  hand, 
\begin{eqnarray*}
\left|\E\Big(f\big(\widetilde X^{vor}\big) \mbox{\bf 1}_{\{X\notin C_{n}\}} \Big)-\E\Big(f\big( X\big) \mbox{\bf 1}_{\{X\notin C_{n}\}} \Big)\right|&\le& [f]_{\rm Lip}  e_{n,2}\big(X,\Gamma_n\big)\Probb\big(X\notin C_{n}\big)^{\frac 12}\\
& \le& [f]_{\rm Lip}\bar d_{n,2}(X)\Probb\big(X\notin C_{n}\big)^{\frac 12}. 
\end{eqnarray*}
Relying on Theorem~\ref{thm:DQRate}$(b)$, we know that, if $\mu = h.\lambda_d \stackrel{\perp}{+} \nu$, then  $\bar d_{n,2}(X)\sim  Q^{dq}_{2,|.|_{eucl}}\,\|h\|_{\frac{d}{2+d}}^{\frac 1p} n^{-\frac 1d}$. 
The ``outside" contribution will be negligible compared to the    ``inside" one as soon as 
\begin{equation}\label{eq:PCn}
\Probb\big(X\notin C_{n}\big) = o\Big( \bar d_{n,2}(X,\Gamma_{n})^2 \Big)=  o\Big( n^{-\frac 2d}\Big).
\end{equation}

 This condition turns out to be  not very demanding and can be checked, at least heuristically, as illustrated below: if $X\stackrel{d}{=} {\cal N}(0;I_d)$, one may conjecture, taking advantage of the spherical symmetries of the normal  distribution, that $C_{n}$ is approximately a sphere centered at $0$ with radius $\rho_{n}= \max_{a\in \Gamma_{n}}|a|$.
As 
 \[
 \Probb(|X|\ge \xi) \sim V_d\,\xi^{d-2}e^{-\frac{\xi^2}{2}}\;\mbox{ as }\; \xi\to +\infty\quad (\mbox{with }V_d=\lambda_{d-1}(S_d(0,1))).
 \]
Condition~\eqref{eq:PCn} is satisfied as soon as $\liminf_n \frac{\rho_{n}}{\sqrt{\log n}}>\frac{2}{\sqrt{d}}$ ($\ge $ if $d=1,2$). As an example, one must have in mind that, for optimal {\em Voronoi} quantization, this inequality is satisfied since (see~\cite{PASA}) $\lim_n \frac{\rho_{n}}{\sqrt{\log n}}=\sqrt{2(1+2/d)}>\frac{2}{\sqrt{d}}$. More precisely, we have
\[
\Probb\big(X\notin C_{n}\big)\sim \kappa_d(\log n)^{\frac d2-1}n^{-1-\frac 2d} \; \mbox{ so that }\; \bar d_{n,2}(X)\Probb\big(X\notin C_{n}\big)^{\frac 12} = O\big(n^{-\frac 2d-\frac 12}(\log n)^{\frac{d-2}{4}}\big).
\] 
 
Numerical experiments, not reproduced here, carried out with the above ${\cal N}(0;I_d)$ distribution confirm   that the radius of optimal dual quantizers always achieves this asymptotics which makes the above partially heuristic reasoning very likely. Moreover, we also tested the two rates of convergence of $\Prob(X\!\in C_n)$ and $\bar d_{n,2}(X)^2$, this time on the joint distribution of the $(W_1, \sup_{t\in [0,1]}W_t)$, $W$ standard Brownian motion which has less symmetries (see appendix \ref{app:num}). They also confirm that the above partially heuristic reasoning is very likely.

\section{Dual quantization: background and basic properties}\label{sec:defs}

Throughout  the paper, except specific mention, $\R^d$ is equipped with a norm $\norm{\cdot}$. 

\subsection{More background}
In the introduction, the definitions related to Voronoi (or regular)and  dual quantizations of a r.v.  $X$ defined on a probability space $(\Omega,{\cal S}, \Prob)$ have been recalled (see~(\ref{def:dnpsharp})-(\ref{def:dbarnp})). 
The aim of this section is to come back briefly to the origin and the
motivations which led us to introduce dual quantization in~\cite{dualStat}. On the way, we will also recall several basic results on dual quantization established in~\cite{dualStat}.
First, we will assume throughout the paper that the r.v.  of interest, $X$, is
truly  $d$-dimensional in the sense that
\[
{\rm aff.dim}({\rm supp}(\Prob_{_X})) =  d.
\] 
 
Let us start by a few practical points.  First note that although all
these definitions are related to a r.v.  $X$, in fact it only depends on the distribution $\Probb = \ProbX$, so we will also often write $\Dqp(\Probb, \Gamma)$ for $\Dqp(X, \Gamma)$ and $\Dqpn(\Probb)$.
%
 Furthermore, to alleviate notations, we will denote from now on $F^p$, $d^p$ and $\bar
 d^p$, \dots instead of  $(F_p)^p$, $(d_p)^p$ and $(\bar d_p)^p$,\dots

\smallskip Let us come back to the terminology {\em dual quantization}: it refers to   a canonical example
of the intrinsic stationary splitting operator: the dual quantization operator.

\smallskip To be more precise,  
let $p\!\in[1,+\infty)$ and let $\Gamma=\{x_1,\ldots,x_n\}\subset \R^d$ be a
grid of size $n\ge d+1$ such that  ${\rm aff.dim}(\Gamma) =  d$ $i.e.$ $\Gamma$ contains at least one $d+1$-tuple of  affinely independent points.

\smallskip The underlying idea is to ``split" $\xi\!\in \conv(\Gamma)$ across at most $d+1$ affinely
independent points in $\Gamma$
proportionally to its barycentric coordinates of $\xi$. There are usually many possible choices of such a $\Gamma$-valued $(d+1)$-tuple of affinely independent points, so  we
introduced a minimal inertia based criterion to select the most appropriate
one  $\xi$, namely the function $F_p(\xi;\Gamma)$ defined for every
$\xi$ as the value of the minimization problem 
\begin{equation}
F_p(\xi;\Gamma)=\inf_{(\lambda_1,\ldots,\lambda_n)}\Big\{\Big(\sum_{i=1}^n
\lambda_i\|\xi-x_i\|^p\Big)^{\frac 1p}, \lambda_i\!\in[0,1] ,
\sum_i\lambda_i\Big[\!\begin{array}{c}x_i\\1\!\end{array}\Big]=\Big[\!\begin{array}{c}\xi\\1\end{array}\!\Big]\Big\}.
\end{equation}
Owing to the compactness of the constraint set ($\lambda_i\ge 0$,
$\sum_i\lambda_i =1$, $\sum_i \lambda_i x_i = \xi$), there exists at least one
solution $\lambda^*(\xi)$  to the above minimization problem. Moreover,  for any such solution,  one shows using convex
extremality arguments, that the set $I^*(\xi):=\big\{i\!\in\{1,\ldots,n\}\mbox{ s.t. } \lambda_i^*(\xi)>0\big\}$ defines an affinely independent subset $\{x_i,\; i\!\in I^*(\xi)\}$.

\smallskip
If, for every $\xi\!\in  conv(\Gamma)$, this solution is  unique, the {\em  dual quantization operator} is simply
defined on $\conv(\Gamma)$ by 
\begin{equation}\label{eq:Jstar}
\forall\, \xi\!\in \conv(\Gamma),\; \forall\,
\omega_0\!\in \Omega_0,\quad \mathcal{J}^*_{\Gamma}(\omega_0, \xi) = \sum_{i\in
I(\xi)^*}x_i \mbox{\bf 1}_{\{\sum_{j=1}^{i-1} \lambda^*_j(\xi)\le U(\omega_0)< 
\sum_{j=1}^{i} \lambda^*_j(\xi)\}},
\end{equation}
where $U$ denotes a  random variable  uniformly distributed over $[0,1]$ on an exogenous probability space  $(\Omega_0,\mathcal{S}_0,
\Prob_0)$. 
This operator ${\cal J}^*_{\Gamma}$ is then measurable
(see~\cite{dualStat}).

The above uniqueness assumption  is not so stringent, especially for applications.
Thus, in a purely Euclidean  quadratic framework:  $\|\,.\,\|= |\,.\,|_{\ell^2}$ (canonical Euclidean norm) and $p=2$  and if $\Gamma$ is said in  ``general position"~(\footnote{no $d+2$ points of $\Gamma$ lie on a sphere in $\R^d$.}), then $\displaystyle\Big\{\{\xi\, \mbox{ s.t. }\, I^*(\xi) =I\}, \, |I|\le d+1\Big\}$ makes up a Borel partition of
$\conv(\Gamma)$ (with possibly empty elements), known in $2$-dimension as the
{\em Delaunay triangulation} of $\Gamma$ (see \cite{rajan} for the connection
with  Delaunay triangulations). 

In a more  general framework, we refer to~\cite{dualStat}
for a construction of dual quantization operators. Such operators are splitting operators since,  by construction, they satisfy the stationarity  property~(\ref{1a}).

\smallskip One must have in mind that the dual quantization operators $\mathcal{J}^*_{\Gamma}(\omega_0, \xi) $   play the  role of the nearest neighbour projections for regular Voronoi quantization. One checks that, by construction, 
\[
\forall\, \xi\!\in \conv(\Gamma),\quad \|\mathcal{J}^*_{\Gamma}(\xi)-\xi\|_{L^p(\Prob_0)}= \|F_p(\xi;\Gamma)\|_{L^p(\Prob_0)}
\]
so that, as soon as  ${\rm supp}(\Prob_{_X})\subset \Gamma$ (or equivalently $\Prob(X\!\in \conv(\Gamma))=1$),
\[
d_{p}(X;\Gamma) = \| \mathcal{J}^*_{\Gamma}(X)-X\|_{L^p(\Prob_0\otimes \Prob)}= \|F_p(X;\Gamma)\|_{L^p(\Prob_0\otimes \Prob)}.
\] 
At this stage, it appears naturally that the the second step of the optimization process is to find (at least) one grid  which optimally
``fits" (the distribution of) $X$ for this criterion $i.e.$ which is the solution to the second
level  optimization problem 
\[ 
d_{n,p}(X)
=\inf\left\{\|\mathcal{J}^*_{\Gamma}(X)-X\|_{L^p(\Prob_0\otimes \Prob)}, \;
\mathcal{J}^*_{\Gamma}: \Omega_0\times \conv(\Gamma)\to \Gamma,
\conv(\Gamma)\supset {\rm supp}(\ProbX),\, |\Gamma|\le n\right\}. 
\] 
Note that
if $X\!\in L_{\R^d}^{\infty}(\Prob)$, $d_{n,p}(X)<+\infty$ if and only if $n\ge d+1$ (whereas it is
identically infinite if $X$ is not essentially bounded). The existence of an
optimal grid (or dual quantizer) has been established in~\cite{dualStat} (see below). 

The error modulus  $d_{n,p}(X)$ can also be characterized as  the {\em lowest $L^p$-mean
approximation error by a r.v.  having at most $n$ values and satisfying the
intrinsic stationarity property} as established in~\cite{dualStat} (Theorem~2, precisely recalled in Theorem~\ref{thm:DQLinkStat} below). 
It should be compared to the well-known property satisfied by the mean (regular) quantization error modulus $e_{n,p}(X)$, namely
\[ 
e_{n,p}(X) = \inf\Big\{\|X-\widehat
X\|_{L^p(\Prob)},\, |\widehat X( \Omega)|\le n\Big\}. 
\]

A stochastic optimization procedure based on a
stochastic gradient approach has been devised in~\cite{dualStat} to  compute optimal dual quantization grids w.r.t. various
distributions  (so far, uniform over $[0,1]^2$, normal, $(W_1,\sup_{t\in[0,1]}  W_t)$, $W$ standard Brownian motion in a purely Euclidean framework). 

Let us conclude by two results established in~\cite{dualStat}. The first one is the   characterization of dual quantization operator in terms  in terms of best
$L^p$-approximation (see~\cite{dualStat}, Theorem~2).

\begin{thm}\label{thm:DQLinkStat}
Let 
$X: \Omega, \mathcal{S},\Prob)\to \R^d$ be a r.v. such that ${\rm aff.dim}({\rm supp}(\Prob_{_X}))= d$ and let  $n\!\in\N$, $n\ge d+1$. Then
\begin{equation*}
\begin{split}
d_{n,p}(X) & = \inf\bigl\{ \E \norm{X -  \JG(X)}_{L^p}: \,
\JG: \Omega_0\times \R^d\to\Gamma, \text{ intrinsic stationary},\\
& \qquad\qquad\qquad\qquad\qquad\qquad\supp(\ProbX) \subset\conv(\Gamma),\,
\abs{\Gamma} \leq n \bigr\}\\
& = \inf\bigl\{ \E \norm{X -  \widehat X}_{L^p}: \widehat X :\PSpace\to \R^d, \\
& \qquad\qquad\qquad\qquad\qquad\qquad \abs{\widehat
X(\Omega_0\times\Omega)} \leq n,\, \E(\widehat X|X) = X \bigr\}\le +\infty.
\end{split}
\end{equation*}
This quantity is finite if and only if $X\in L^\infty(\Omega, \mathcal{S},\Prob)$.
\end{thm}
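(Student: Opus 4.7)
The goal is to prove the two equalities by establishing the chain $d_{n,p}(X) = \text{(first inf)} = \text{(second inf)}$, together with the finiteness dichotomy. My plan is to show each equality by a pair of matching inequalities, then deduce the $L^\infty$ characterization as a direct byproduct.

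For the first equality, I would start from an intrinsic stationary operator $\JG:\Omega_0\times\R^d\to\Gamma$ with $\supp(\ProbX)\subset\conv(\Gamma)$. Disintegrating over $\Prob_0$, define
\[
\lambda_x(\xi):=\Prob_0\bigl(\JG(\cdot,\xi)=x\bigr),\qquad x\!\in\Gamma,
\]
so that $\lambda_x(\xi)\!\in[0,1]$, $\sum_x\lambda_x(\xi)=1$, and the intrinsic stationarity \eqref{1a} yields $\sum_x\lambda_x(\xi)\,x=\xi$. Hence $(\lambda_x(\xi))_x$ is admissible in the definition of $F_p(\xi;\Gamma)$, which gives $\E_{\Prob_0}\norm{\JG(\cdot,\xi)-\xi}^p\ge F_p(\xi;\Gamma)^p$; integrating over $\ProbX$ yields the bound $\E\norm{X-\JG(X)}^p\ge d_{n,p}(X)^p$. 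For the reverse inequality, a measurable selection argument provides minimizers $\lambda^*(\xi)$ of $F_p(\xi;\Gamma)^p$ depending measurably on $\xi$; then the explicit randomization described in the excerpt, based on a uniform $U$ on $(\Omega_0,\Ss_0,\Prob_0)$, produces an intrinsic stationary $\mathcal{J}^*_\Gamma$ that achieves $\E_{\Prob_0}\norm{\mathcal{J}^*_\Gamma(\xi)-\xi}^p=F_p(\xi;\Gamma)^p$ pointwise.

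For the second equality, the forward direction is immediate: setting $\widehat X:=\JG(X)$ satisfies $|\widehat X(\Omega_0\times\Omega)|\le|\Gamma|\le n$, and by Fubini together with intrinsic stationarity, $\E(\widehat X\mid X)=\E_{\Prob_0}(\JG(\cdot,X))=X$. For the reverse direction, given any $\widehat X$ on $\PSpace$ with $|\widehat X(\Omega_0\times\Omega)|\le n$ and $\E(\widehat X\mid X)=X$, take $\Gamma:=\widehat X(\Omega_0\times\Omega)$, extract a regular conditional distribution of $\widehat X$ given $X$, and set $\lambda_x(\xi):=\Prob(\widehat X=x\mid X=\xi)$. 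These weights are feasible in $F_p(\xi;\Gamma)$ since $\sum_x\lambda_x(\xi)\,x=\E(\widehat X\mid X=\xi)=\xi$, so
\[
\E\norm{X-\widehat X}^p=\E\!\!\sum_{x\in\Gamma}\lambda_x(X)\,\norm{X-x}^p\ \ge\ \E F_p(X;\Gamma)^p\ \ge\ d_{n,p}(X)^p.
\]

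The finiteness statement follows at once from the second characterization: if the inf is finite, some admissible $\widehat X$ takes values in a finite set $\{x_1,\ldots,x_n\}$ and $X=\E(\widehat X\mid X)\!\in\conv\{x_1,\ldots,x_n\}$ a.s., which is bounded, so $X\in L^\infty$. Conversely, when $X\in L^\infty$ and $n\ge d+1$, one can choose a grid whose convex hull contains $\supp(\ProbX)$; compactness of $\conv(\Gamma)$ makes $F_p(X;\Gamma)$ bounded, giving a finite admissible value. The main obstacle, and the only step with real content, is the measurable selection of $\lambda^*(\xi)$ needed to realize the infimum in $F_p(\xi;\Gamma)$ by an honest intrinsic stationary operator; everything else reduces to Fubini applied to a product probability space and to the feasibility of conditional weights in the definition of $F_p$.
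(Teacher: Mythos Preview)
The paper does not actually prove this theorem: it is quoted from the companion paper~\cite{dualStat} (``the following theorem established in~\cite{dualStat}''), and only the surrounding discussion sketches the construction of the operator $\mathcal{J}^*_\Gamma$ via a uniform random variable and records the identity $\|\mathcal{J}^*_{\Gamma}(X)-X\|_{L^p(\Prob_0\otimes \Prob)}=\|F_p(X;\Gamma)\|_{L^p(\Prob)}$. There is therefore no in-paper proof to compare against.

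That said, your argument is the natural one and matches the informal description given here: disintegrate an intrinsic stationary operator to obtain admissible barycentric weights (giving $\ge$), and conversely randomize a measurable selection of optimal weights with a uniform variable to build $\mathcal{J}^*_\Gamma$ (giving $\le$); then pass between operators and general $\widehat X$ via regular conditional distributions. Your identification of the measurable selection of $\lambda^*(\xi)$ as the only nontrivial step is accurate; it is handled in~\cite{dualStat} by parametric linear programming (the feasible set and cost depend measurably on $\xi$, and the argmin correspondence admits a Borel selector). One small caveat on the finiteness clause: as you note, the converse implication needs $n\ge d+1$ in general (or, more precisely, $n$ at least one plus the affine dimension of $\supp(\ProbX)$); the paper makes this explicit just above the theorem, so your formulation is in fact sharper than the bare statement.
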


Finally, the following existence result 
for optimal dual quantizers  {\em at level $n\in\N$}  and the $L^p$-norm
with $p\in(1,\infty)$  is  established in \cite{dualStat}.
 Although we will not
use it in our proofs, this result is recalled for the reader's convenience.

\begin{thm}[Existence of optimal quantizers]\label{thm:existence}
Let $X \in L^p(\Prob)$ for some $p \in (1,\infty)$. 
\begin{enumerate}
  \item[(a)] If $\supp(\ProbX)$ is compact, then  there
  exists for every $n\in \N$ a grid $\Gamman^{\ast} \subset \R^d,\,
  \abs{\Gamman^{\ast}}\leq n$ such that $d_p(X;\Gamman^{\ast}) = d_{n,p}(X)$.
  \item[(b)] If $\ProbX$ is strongly continuous in the sense that it assigns 
  no mass  to  hyperplanes of $\R^d$, then  there exists for every $n\in \N$
  a grid $\Gamman^{\ast} \subset \R^d,\, \abs{\Gamman^{\ast}}\leq n$ such that
$\bar d_p(X;\Gamman^{\ast}) = \bar d_{n,p}(X)$.
\end{enumerate}
If furthermore $\abs{\supp(\ProbX)}\geq n$, then the above statements hold with
$\abs{\Gamman^{\ast}}= n$.
\end{thm}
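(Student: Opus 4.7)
The strategy is the standard direct method of the calculus of variations: take a minimizing sequence $(\Gamma^k)_{k\in \N}$ of grids with $|\Gamma^k|\le n$ and $d_p(X;\Gamma^k)\downarrow d_{n,p}(X)$, confine the grids to a fixed compact set, extract a convergent subsequence, and conclude by lower semicontinuity of the cost functional.

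For part (a), confinement proceeds as follows. Since finiteness of the error forces $\supp(\ProbX)\subset \conv(\Gamma^k)$, and since by Carath\'eodory each $\xi\in \supp(\ProbX)$ admits a barycentric representation on at most $d+1$ points of $\Gamma^k$, any point of $\Gamma^k$ that escapes to infinity is either not needed to maintain the covering of $\supp(\ProbX)$ (and can be pulled back onto a fixed bounded region without changing $F_p(\,\cdot\,;\Gamma^k)$ on $\supp(\ProbX)$) or is used with positive weight in some barycentric representation (and then incurs a prohibitive cost of the form $\lambda\norm{\xi-x}^p$). A careful swapping/projection argument along these lines shows we may assume $\Gamma^k \subset \bar B(0,R)$ for some fixed $R$, so compactness of $\bar B(0,R)^n$ produces a limit grid $\Gamma^\ast$ with $|\Gamma^\ast|\le n$ along a subsequence.

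The core technical step is lower semicontinuity of $\Gamma\mapsto F_p(\xi;\Gamma)$. The local error is the optimal value of a linear program whose constraint matrix and cost depend continuously on $\Gamma$; parametric LP perturbation results then yield $F_p(\xi;\Gamma^\ast)\le \liminf_k F_p(\xi;\Gamma^k)$ for every $\xi$ in the relative interior of $\conv(\Gamma^\ast)$, and Fatou's lemma promotes this to $d_p(X;\Gamma^\ast)\le d_{n,p}(X)$, giving optimality. The main obstacle I expect is the boundary of $\conv(\Gamma^\ast)$: there the feasible polytope of the defining LP can collapse, so passing to the limit requires an argument in the spirit of Berge's maximum theorem after carving off those critical $\xi$.

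Part (b) proceeds analogously for $\bar d_p$. Confinement of the minimizing sequence is actually simpler now, since the nearest-neighbour penalty $\dist(X,\Gamma)\,\mathbf{1}_{\conv(\Gamma)^c}$ in $\bar F_p$ directly prevents grid points from escaping to infinity, exactly as in the classical Voronoi existence proof. The strong continuity assumption on $\ProbX$ forces $\ProbX(\partial \conv(\Gamma^\ast))=0$, which is precisely what is needed to pass the limit inside the integral, since $\bar F_p$ may be discontinuous across this (hyperplane-structured) boundary. Finally, the assertion $|\Gamma^\ast_n|=n$ when $|\supp(\ProbX)|\ge n$ follows by a strict-improvement argument: if an optimal grid had strictly fewer than $n$ points, one could add a point $x\in \supp(\ProbX)$ not yet in the grid and shift an infinitesimal amount of weight onto combinations involving $x$; strict convexity of $t\mapsto t^p$ for $p>1$ makes this decrease strict.
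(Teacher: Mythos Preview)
The paper does not actually prove this theorem: it is stated ``for the reader's convenience'' with the proof deferred entirely to the companion paper~\cite{dualStat}. So there is no proof here to compare against, and your sketch cannot be judged as matching or differing from the paper's argument.

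That said, your outline is the natural direct-method approach and is broadly in the right spirit for this kind of existence result. A few points deserve care if you flesh it out. First, your confinement argument in part~(a) is the weakest link: the dichotomy ``either the escaping point is not needed, or it is used with positive weight and incurs prohibitive cost'' is not quite airtight, since a point $x^k_j\to\infty$ could appear with weight $\lambda^k_j\to 0$ in optimal barycentric representations while $\lambda^k_j\|\xi-x^k_j\|^p$ stays bounded; you need a uniform argument over $\xi\in\supp(\ProbX)$, not a pointwise one. Second, you correctly flag the boundary $\partial\conv(\Gamma^\ast)$ as the delicate spot for lower semicontinuity; note that in part~(a) the support is compact but you have no continuity assumption on $\ProbX$, so $\ProbX$ may charge faces of $\conv(\Gamma^\ast)$ and Berge-type arguments alone will not suffice---you will need to show that feasibility is preserved in the limit on all of $\supp(\ProbX)$, not just its relative interior. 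Third, in part~(b) your use of strong continuity to kill $\ProbX(\partial\conv(\Gamma^\ast))$ is exactly right, and the confinement via the nearest-neighbour penalty is indeed the standard Voronoi argument. The final strict-improvement step for $|\Gamma^\ast|=n$ is plausible but needs more than strict convexity of $t\mapsto t^p$: you must exhibit a region of positive $\ProbX$-mass on which adding the new point genuinely lowers $F_p$ (or $\bar F_p$), which is a geometric statement about how the new point interacts with the existing Delaunay-type structure.
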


\subsection{Local properties of the dual quantization functional}\label{sec:localProperties}

We establish or recall  in this paragraph some first general properties of the local $L^p$-dual
quantization functional $F^p$, which will be needed for the final proof of
Theorem \ref{thm:DQRate}.

\begin{prop}\label{prop:PtsInsertion} Let $\Gamma_1$, $\Gamma_2 \subset \R^d$
be finite grids and let $\xi\!\in \R^d$. Then
\[
\Gamma_1 \subset \Gamma_2 \Longrightarrow F_p(\xi; \Gamma_2) \leq F_p(\xi;
\Gamma_1).
\]
\end{prop}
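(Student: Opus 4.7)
The plan is to exploit the fact that enlarging the grid can only enlarge the feasible set in the linear program defining $F_p(\xi;\Gamma)$, so the infimum cannot increase. This is a pure monotonicity argument; no geometry is actually used.

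Concretely, I will first dispose of the trivial case $\xi\notin\conv(\Gamma_1)$: there the feasible set defining $F_p(\xi;\Gamma_1)$ is empty, hence $F_p(\xi;\Gamma_1)=+\infty$ by the usual convention $\inf\emptyset=+\infty$, and the inequality holds vacuously. So assume $\xi\in\conv(\Gamma_1)$, in which case $\xi\in\conv(\Gamma_2)$ as well since $\Gamma_1\subset\Gamma_2$, and both feasible sets are non-empty.

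Now I pick any admissible family $(\lambda_x)_{x\in\Gamma_1}$ for $F_p(\xi;\Gamma_1)$, i.e.\ $\lambda_x\in[0,1]$, $\sum_{x\in\Gamma_1}\lambda_x x=\xi$ and $\sum_{x\in\Gamma_1}\lambda_x=1$. I extend it to $(\tilde\lambda_x)_{x\in\Gamma_2}$ by setting $\tilde\lambda_x=\lambda_x$ for $x\in\Gamma_1$ and $\tilde\lambda_x=0$ for $x\in\Gamma_2\setminus\Gamma_1$. The constraints $\tilde\lambda_x\in[0,1]$, $\sum_{x\in\Gamma_2}\tilde\lambda_x x=\xi$ and $\sum_{x\in\Gamma_2}\tilde\lambda_x=1$ are obviously satisfied, and the objective value is unchanged:
\[
\sum_{x\in\Gamma_2}\tilde\lambda_x\,\norm{\xi-x}^p=\sum_{x\in\Gamma_1}\lambda_x\,\norm{\xi-x}^p.
\]
Hence every value attained by the $\Gamma_1$-program is also attained by the $\Gamma_2$-program, so the infimum over the larger feasible set is at most the infimum over the smaller, yielding $F_p(\xi;\Gamma_2)^p\leq F_p(\xi;\Gamma_1)^p$ and therefore the claim after taking $p$-th roots.

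There is no real obstacle here; the only point that deserves a word is the convention $\inf\emptyset=+\infty$, which keeps the statement coherent when $\xi\notin\conv(\Gamma_1)$ but $\xi\in\conv(\Gamma_2)$ (so that $F_p(\xi;\Gamma_2)$ may well be finite while $F_p(\xi;\Gamma_1)=+\infty$).
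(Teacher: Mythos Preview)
Your proof is correct and follows essentially the same approach as the paper: both arguments extend a feasible weight vector for the $\Gamma_1$-program to one for the $\Gamma_2$-program by padding with zeros on the extra points, so the infimum cannot increase. Your additional remark handling the degenerate case $\xi\notin\conv(\Gamma_1)$ via the convention $\inf\emptyset=+\infty$ is a nice clarification that the paper leaves implicit.
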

\noindent {\it Proof.} First note that the set $\{\lambda\!\in \R^n\,|\,  \left[  \begin{smallmatrix}
          x_1 & \ldots & x_m\\
          1 & \ldots & 1\\
        \end{smallmatrix}  \right] \lambda =
      \left[\begin{smallmatrix}
         \xi\\ 1\\
        \end{smallmatrix} \right]\}$ 
        is clearly a compact set on which the continuous function 
        $\lambda
        \mapsto \sum_{i=1}^n \lambda_i \|\xi-x_i\|^p$ attains a minimum.
Assume $\Gammaone = \{x_1, \ldots, x_m\}$ and $\Gammatwo = \{x_1, \ldots, x_m,
x_{m+1}, \ldots, x_n\}$. Then
\begin{eqnarray*}
\begin{split}
  F^p(\xi; \Gammatwo)  = \LP{\xi} 
  & \leq
  \underset{ \text{s.t. } \left[  \begin{smallmatrix}
          x_1 & \ldots & x_m\\
          1 & \ldots & 1\\
        \end{smallmatrix}  \right] \lambda =
      \left[\begin{smallmatrix}
         \xi\\ 1\\
        \end{smallmatrix} \right],\,
      \lambda \geq 0 }{\min_{\lambda \in \R^n, \lambda_{m+1}=\cdots= \lambda_n = 0}\sum_{i=1}^n \lambda_i
      \, \norm{\xi-x_i}^p}\\
  & = \underset{ \text{s.t. } \left[  \begin{smallmatrix}
          x_1 & \ldots & x_m\\
          1 & \ldots & 1\\
        \end{smallmatrix}  \right] \lambda =
      \left[\begin{smallmatrix}
         \xi\\ 1\\
        \end{smallmatrix} \right],\,
      \lambda \geq 0 }{\min_{\lambda\in \R^m }\sum_{i=1}^m \lambda_i
      \, \norm{\xi-x_i}^p}
   = F^p(\xi; \Gammaone).\qquad \Box
\end{split}
\end{eqnarray*}

We will also make use of the following three properties established
in~\cite{dualStat} (Propositions~11, 12, 13 respectively). In particular, the third claim yields  a first upper
bound for the asymptotics of the local $L^p$-dual quantization error 
when the size of
the grid goes to infinity.

\begin{prop}\label{prop:rappels}
%
$(a)$ {\em Scalar bound:} Let $\Gamma = \{x_1, \ldots, x_n\}\subset  \R$ with $x_1\leq \ldots\leq x_n$.
Then
\[
 \forall \xi \in [x_1, x_n],\quad \Fp(\xi; \Gamma) \leq \max_{1\leq i \leq n-1}
\Bigl(\frac{x_{i+1}-x_i}{2}\Bigr)^p.
\]
%
$(b)$ {\em Local product Quantization:}
Let $\norm{\cdot} = |\cdot|_{\ell^p}$ 
and let $\Gamma = \Prod_{1\le j\le d} \Gamma_j$ for some $\Gamma_j
\subset \R$. Then 
\[ 
 \forall\, \xi\!\in \R^d, \quad F_{p, |.|_{\ell^p}}(\xi; \Gamma) = \Big(\sum_{j=1}^d F^p(\xi^j;
\Gamma_j)\Big)^{\frac 1p} 
\]
and the same holds true with $\bar F_{p,\ell^p}$ on $\R^d$.
%

$(c)$ {\em Product Quantization:}
Let $C=a+L\,[0,1]^d$, $a=(a_1,\ldots,a_d)\!\in \R^d$, $L>0$, be a hypercube,
with edges parallel to the coordinate axis with common edge-length $L$. Let $\Gamma$ be the product quantizer  of size $(m+1)^d$ defined by
$$
\Gamma=\Prod_{k=1}^d\Big\{a_j+\frac{i L}{m},\, i=0,\ldots,m\Big \}.
$$
%
There exists a positive real  constant  $C_{\norm{.},p} = \sup_{|x|_{\ell^p}=1}\norm{x}^p>0$ such that 
\begin{equation}\label{eq:prodFp}
  \forall \,\xi\!\in C,\quad F^p(\xi; \Gamma)  \leq  d\, C_{\norm{\cdot},p} \cdot
  \Bigl(\frac{L}{2} \Bigr)^p \cdot m^{-p}.
\end{equation}
\end{prop}



\section{Extended Pierce lemma and applications}\label{Pierce}

The aim of this section is to provide a non-asymptotic ``universal" upper-bound for the optimal  (extended) $L^p$-mean 
dual quantization error in the spirit of~\cite{pierce}: it achieves nevertheless the optimal rate of convergence when the size $n$
goes to infinity.
Like for  Voronoi quantization this upper-bound deeply relies on a
random quantization argument and will be a key in the proof of the sharp rate (step~2 of the proof of Theorem~\ref{thm:DQRate}).

For every integer $n\ge 1$, we define the set of ``non-decreasing" $n$-tuples of
$\R^n$ by
\[ {\cal I}_n := \{(x_1,\ldots,x_n)\!\in \R^n,\; -\infty<x_1\le
x_2\le\cdots\le x_n<+\infty\}.
\] 
Let $(x_1,\ldots,x_n)\!\in {\cal I}_n$ (so that $\Gamma=\{x_1,\ldots, x_n\}$ has at most $n$ elements) and  let $\xi\!\in \R$. When $d=1$, it is clear that  the minimization problem~(\ref{eq:Fp}) always
 has a unique solution when $\xi\!\in [x_1,x_n]$ so that, for every $\omega_0\!\in  \Omega_0=[0,1]$, one has 
\begin{eqnarray*}
{\cal \bar J}^*_{(x_1,\ldots,x_n)}(\omega_0,\xi)&=&
\sum_{i=1}^{n-1}\Big(x_i\mbox{\bf 1}_{\{\omega_0 \le \frac{x_{i+1}-\xi}{x_{i+1}-x_i}\}} 
+x_{i+1}\mbox{\bf 1}_{\{\omega_0\ge  \frac{x_{i+1}-\xi}{x_{i+1}-x_i}\}}\Big) \mbox{\bf 1}_{[x_i,x_{i+1})}(\xi)\\
&& +x_1\mbox{\bf 1}_{(-\infty,x_1)}(\xi)+x_n  \mbox{\bf 1}_{[
x_n,+\infty)}(\xi).
\end{eqnarray*}

\smallskip
It follows from~(\ref{def:Fbarp}) that
\begin{eqnarray}
\nonumber\bar F_n^p(\xi, x_1,\ldots,x_n) &=&   \E_{\Prob_0}\big|\xi -{\cal
\bar J}^*_{(x_1,\ldots,x_n)}(\omega_0,\xi)\big|^p\\
\label{eq:Fbard=1}
&=& \sum_{i=1}^{n-1}\left(\frac{(x_{i+1}-\xi)^p(\xi-x_i)}{x_{i+1}-x_i}+\frac{(x_{i+1}-\xi)(\xi-x_i)^p}{x_{i+1}-x_i}\right ) \mbox{\bf 1}_{[x_i,x_{i+1})}(\xi)\\
&& + (x_1-\xi )^p\mbox{\bf 1}_{(-\infty,x_1)}(\xi)+ (\xi-x_n)^p \mbox{\bf 1}_{[
x_n,+\infty)}(\xi)\nonumber
\end{eqnarray}
(the subscript $_n$ is temporarily added to the functional $\bar F^p$, $\bar F_p$, etc,  to emphasize that they are defined on $ {\cal I}_n\times \R$).
The   functionals $\bar F_n^p$ share three important properties   extensively used  in what follows:

\begin{itemize}
\item {\em Additivity}: Let $(x_1, \ldots,x_{i_0},\ldots,x_n)\!\in {\cal I}_n$. Then for every $\xi\!\in \R$
\[
\bar F^p_n (\xi, x_1,\ldots,x_n)= \bar F^p_{i_0} (\xi,
x_1,\ldots,x_{i_0})\mbox{\bf 1}_{(-\infty,x_{i_0})}(\xi)+ \bar F^p_{n- i_0+1}
(\xi, x_{i_0},\ldots,x_{n})\mbox{\bf 1}_{ [x_{i_0}, +\infty)}(\xi).
\]
\item
 {\em  Consistency and monotony}:
 Let $(x_1,\ldots,x_{n})\!\in {\cal I}_{n}$ and $\widetilde x_i\!\in [x_i,x_{i+1}]$ for an $i\!\in \{1,\ldots,n-1\}$. For every  $\xi \!\in \R$, 
\[
\bar F^{p}_{n+1}(\xi, x_1,\ldots,x_{i-1},x_i,\widetilde x_i,x_{i+1},\ldots,x_n)\le \bar F^{p}_{n}(\xi, x_1,\ldots,x_{i-1},x_i,x_{i+1},\ldots,x_n).
\]
When $\xi\!\in [x_1,x_n]$, $\bar F_n^p(\xi;x_1,\dots,x_n)$ coincides with  $
F^p(\xi,\{x_1,\dots,x_n\})$ and this inequality is  a consequence of the
definition of $ F_p$ as the value function of the minimization
problem~(\ref{eq:Fp}). Outside, the above inequality holds as an equality since
it amounts to the nearest distance of $\xi$ to $[x_1,x_n]$. 
%
As a  consequence, 
\begin{equation}\label{monotonie1}
n\longmapsto \bar d_{n,p}(X)=  \inf_{(x_1,\ldots,x_n)\in {\cal I}_n}\|\bar F_{p,n}(X,
x_1,\ldots,x_n)\|_{L^p} \;\mbox{ is non-increasing,}
\end{equation}
More generally, for every fixed $x^0\!\in \R$, both
\begin{equation}\label{monotonie2}
\hskip -0.5cm  n\longmapsto\hskip -0.65cm   \inf_{(x^0,x_2,\ldots,x_n)\in {\cal I}_n}\|\bar F_{p,n}(X,x^0, x_2,\ldots,x_n)\|_{L^p} \,\mbox{ and }\,n\longmapsto \hskip -0.65cm  \inf_{(x_1,x_2,\ldots,x_{n-1},x^0)\in {\cal I}_n}\|\bar F_{p, n}(X, x_1,\ldots,x_{n-1},x^0)\|_{L^p} 
\end{equation}
are non-increasing.

\item {\em Scaling}: $\forall\,\omega\!\in \Omega_0$, $\forall\, (x_1,\ldots,x_n)\!\in {\cal I}_n$, $\forall\, \xi \!\in \R$, $\forall\, \alpha\!\in \R_+$,  $\forall\, \beta\!\in \R$,
\begin{eqnarray*}\label{eq:scaling}
(i)& \bar F^p_n(\alpha\, \xi+\beta, \alpha\, x_1 +\beta ,\ldots,\alpha\, x_n +\beta)&= \alpha \,\bar F_n^p(\xi,x_1,\ldots,x_n),\\
(ii) &\bar F^p_n(\xi, x_1   ,\ldots,x_n)&=\bar
F^p_n(-\xi,-x_n,\ldots,-x_1).
\end{eqnarray*}
\end{itemize}


\begin{thm}\label{Pierce0}Let $p,\,\eta>0$. There exists a real
constant $C_{p,\eta}>0$
such that for every random variable  $X:(\Omega,{\cal A},\Prob)\to \R$,
\[
\forall\, n\ge 1,\;\quad
\inf_{(x_1,\ldots,x_n)\in {\cal I}_n}\|\bar
F_{p,n}(X, x_1,\ldots,x_n)\|_{L^p} \le C_{p,\eta}\|X\|_{L^{p+\eta}}n^{-1}.
\] 
\end{thm}

The proof below relies  on a random quantization argument involving an $n$-sample of the Pareto$(\delta)$-distribution on $[1,+\infty)$. Though significantly more demanding, it plays the same  crucial role in establishing the sharp rate result as the so-called Pierce Lemma established in~\cite{meanRegular} (see also~\cite{Foundations}) for Voronoi quantization    to prove the original Zador Theorem.  

In the proof, we will make use of the $\Gamma$ and $B$ functions defined by
$\Gamma(a)=\int_0^{+\infty}u^{a-1}e^{-u}du$, $a>0$, and
$B(a,b) =\int_0^1u^{a-1}(1-u)^{b-1}du$, $a,b>0$,  respectively, and satisfying $B(a,b)= \frac{\Gamma(a)\Gamma(b)}{\Gamma(a+b)}$.

\begin{proof} 
%
{\sc Step~1.} 
%
We first assume that $X$ is $[1,+\infty)$-valued and $n\ge 2$. Let $(Y_n)_{n\ge1}$ be a sequence of i.i.d. Pareto$(\delta)$-distributed random variables (with probability density $f_{_Y}(y) =\delta y^{-\delta-1}\mbox{\bf 1}_{\{y\ge 1\}}$) defined on a probability space $(\Omega',{\cal A}', \Prob')$. 
%
%

\smallskip Let $\delta=\delta(\eta,p)\!\in (0,\frac{\eta}{\lceil p \rceil})$ be chosen so that $\ell=\ell(p,\eta)=\frac{p}{\delta}$  is an integer and $\ell\ge 2$. 
 For every $n\ge \ell(p,\eta)$, set $\widetilde n= n-\ell+2\!\in \N$, $\widetilde n\le n$. It follows from the monotony property~(\ref{monotonie2}) that
\begin{eqnarray*}
\inf _{(1, x_2,\ldots,x_n)\in {\cal I}_n}\| \bar F_{p,n} (X, 1,x_2,\ldots,x_n) \|_{L^p}& \le & \hskip -0.25 cm \inf_{(1,x_2,\ldots,x_{\widetilde n})\in {\cal I}_{\widetilde n}} \hskip -0.25 cm\|\bar F_{p,\widetilde n }(X,1,x_2,\ldots,x_{\widetilde n})\|_{L^p}\\
&\le& \hskip -0.25 cm\|\bar F_{p,\widetilde n}(X,Y^{(n)}_0,Y^{(n)}_1,\ldots,Y^{(n)}_{\widetilde n-1})\|_{L^p(\Omega\times \Omega', \Prob\otimes \Prob')}
\end{eqnarray*}

\noindent where, for every $n \ge 1$,  $Y^{(n)}=(Y^{(n)}_1,\ldots,Y^{(n)}_n)$ denotes the standard order statistics of the first $n$  terms of the sequence $(Y_k)_{k\ge1}$ and $Y^{(n)}_0=1$.  On the other hand, we recall (see $e.g.$~\cite{CAVC}) that  the joint distribution of $(Y^{(n)}_i,Y^{(n)}_{i+1})$, $1\le i\le n-1$,  is given by 
\[
\Prob'_{(Y^{(n)}_i,Y^{(n)}_{i+1})}(du,dv)=\delta^2  \frac{n!}{(i-1)!(n-i-1)!}(1-u^{-\delta})^{i-1}v^{-\delta(n-i-1)}(uv)^{-\delta-1}\,du\,dv.
\]


\medskip
\noindent {\sc Step~2.} Assume that $n\ge 3$.  Since $X$ and $(Y_1,\ldots,Y_0)$ are  independent and $X\ge 1$
\[
\|\bar F_{p,\widetilde n}(X,Y^{(n)}_0,Y^{(n)}_1,\ldots,Y^{(n)}_{\widetilde n-1})\|^p_{L^p(\Omega\times \Omega', \Prob\otimes \Prob')}= \int_{[1,+\infty)}\|\bar F_{p,\widetilde n}(\xi, Y^{(n)}_0,Y^{(n)}_1,\ldots,Y^{(n)}_{\widetilde n-1})\|^p_{L^p( \Omega', \Prob')}\Prob_{_X}(d\xi).
\]
Relying on the expression~(\ref{eq:Fbard=1}) of the functional $\bar F^p_n$, we set for every $i=0,\ldots,n-\ell$ and $\xi\ge 1$
 \[
 (a)_i := \E \left(\frac{(Y^{(n)}_{i+1}-\xi)^p(\xi-Y^{(n)}_i)}{Y^{(n)}_{i+1}-Y^{(n)}_i}\mbox{\bf 1}_{\{Y^{(n)}_i< \xi\le Y^{(n)}_{i+1}\}}\right),\,
 (b)_i := \E \left(\frac{(Y^{(n)}_{i+1}-\xi)(\xi-Y^{(n)}_i)^p}{Y^{(n)}_{i+1}-Y^{(n)}_i}\mbox{\bf 1}_{\{Y^{(n)}_i< \xi\le Y^{(n)}_{i+1}\}}\right)
 \]
 and $\displaystyle (c)_{\widetilde n-1}:= \E\Big(\big ( \xi-Y^{(n)}_{n-\ell+1}\big)^p\mbox{\bf 1}_{\{\xi\ge Y^{(n)}_{n-\ell+1}\}} \Big)$.
 
 We will first inspect the sum $\sum_{i=0}^{n-\ell} (\Box)_i$, $\Box=a,b$   successively.
 
 Let $i\!\in \{1, \ldots, \tilde n-1\}$. It follows from the above expression of the distribution of $(Y^{(n)}_i,Y^{(n)}_{i+1})$ that
\[
 (a)_i = \delta^2 \int\!\!\int_{1\le u\le \xi\le v} \frac{(v-\xi)^p(\xi-u)}{v-u}(1-u^{-\delta})^{i-1}v^{-\delta(n-i-1)}(uv)^{-\delta-1} \,du\,dv \frac{n!}{(i-1)!(n-i-1)!}.
\]
The change of variable $v= \xi(w+1)$   yields
\[
(a)_i =  n(n-1)\left(\begin{smallmatrix} n-2\\i-1\end{smallmatrix}\right)\delta^2 \int_1^{\xi} \hskip -0.15cm du\, (\xi-u)(1-u^{-\delta})^{i-1} u^{-\delta-1} \xi^{p-\delta(n-i)} \hskip -0.15cm  \int_{0}^{+\infty} \hskip -0.15cm  dw\,  \frac{w^p}{\xi(w+1)-u}(w+1)^{-\delta(n-i)-1}.
\]
Noting that $\frac{\xi-u}{\xi(w+1)-u}\le \frac{1}{w+1}$ then leads to
\[
(a)_i \le   n(n-1)\left(\begin{smallmatrix} n-2\\i-1\end{smallmatrix}\right)\delta^2n(n-1)\xi^{p-\delta (n-i)}\int_1^{\xi} \!(1-u^{-\delta})^{i-1} u^{-\delta-1}du \,\times \int _0^{+\infty} \! w^p(1+w)^{-\delta(n-i)-2}dw .
\]
The change of variable $w=\frac{1}{y}-1$ shows that $\displaystyle  \int _0^{+\infty} w^p(1+w)^{-\delta(n-i)-2}dw  = B(\delta(n-i)-p+1, p+1)$ whereas $\displaystyle \int_1^{\xi}  (1-u^{-\delta})^{i-1} u^{-\delta-1} du = \frac{(1-\xi^{-\delta})^i}{\delta i}$ so that
\[
(a)_i \le  \delta n \left(\begin{smallmatrix} n-1\\i\end{smallmatrix}\right) (1-\xi^{-\delta})^i \xi^{p-\delta (n-i)} \frac{\Gamma(p+1)\Gamma(\delta(n-i)-p+1)}{\Gamma(\delta (n-i)+2)}
\]
where we used the standard  identity 
$ \left(\begin{smallmatrix} n-1\\i\end{smallmatrix}\right)= \frac{n-1}{i}\left(\begin{smallmatrix} n-2\\i-1\end{smallmatrix}\right)$. 

When $i=0$, noting that the density of $Y^{(n)}_1= \min_{1\le i\le n}Y_i$ is
$\delta n y^{-\delta n-1}\mbox{\bf 1}_{\{y\ge 1\}}$, we get
 \begin{eqnarray*}
(a)_0&= &\E \left(\frac{(Y^{(n)}_1-\xi)^p(\xi-1)}{Y^{(n)}_1-1}\mbox{\bf 1}_{\{1\le \xi\le Y^{(n)}_1\}}\right) \\
&=& \delta n \int_{\xi }^{+\infty} (\xi-1)\frac{(v-\xi )^p}{v-1} v^{-\delta n-1} dv\\
&=& \delta n \xi^{p-\delta n}\int_0^{+\infty}\frac{(\xi-1)}{\xi(w+1)-1}  w^p (w+1)^{-\delta n-1} dw \quad \mbox{where we set  $v=\xi (w+1)$}  \\
&\le &   \delta n \xi^{p-\delta n} B(\delta n -p+1,p+1)
  \end{eqnarray*}
where we used in the last line that $\frac{\xi-1}{\xi(w+1)-1} \le \frac{1}{w+1}$.   As a consequence 
 \begin{eqnarray*}
\sum_{i=0}^{n-\ell} (a)_i &\le &  \delta \,n\, \Gamma(p+1) \sum_{i=0}^{n-\ell} \left(\begin{smallmatrix} n-1\\ i \end{smallmatrix}\right)\xi^{p-\delta(n-i)} (1-\xi^{-\delta})^i\frac{\Gamma(\delta(n-i)-p+1}{\Gamma(\delta(n-i)+2}\\
&\le  &\delta\, n\, \Gamma(p+1)\xi^p (1-\xi^{-\delta})^n\sum_{j=\ell}^n \left(\begin{smallmatrix} n-1\\ j -1\end{smallmatrix}\right)(\xi^{\delta}-1)^{-j} \frac{\Gamma(\delta j-p+1)}{\Gamma(\delta j +2)}.
\end{eqnarray*}
Now using that for every $a>0$, $\frac{\Gamma(x+a)}{\Gamma(x)}\sim x^a$ as $x\to \infty$, we derive the existence of a real constants $\tilde \kappa^{(0)}_{p,\delta}, \,\kappa^{(0)}_{p,\delta}>0$ such that
\[
\forall\, j\ge 0,\quad  \frac{\Gamma(\delta j-p+1)}{\Gamma(\delta j +2)} \le \tilde \kappa^{(0)}_{p,\delta}\, j^{-(p+1)}\le  \kappa^{(0)}_{p,\delta} \, \frac{j^{\lceil p\rceil -p}}{j(j+1)\cdots(j+\lceil p\rceil )}.
\]
In turn, using that
\[
\left(\begin{smallmatrix} n+\lceil p\rceil \\ j+\lceil p\rceil  \end{smallmatrix}\right) = \frac{(n+\lceil p\rceil )\cdots n}{(j+\lceil p\rceil ) \cdots j} \left(\begin{smallmatrix} n-1\\ j-1 \end{smallmatrix}\right),
\]
we finally obtain
 \begin{eqnarray*}
\sum_{i=0}^{n-\ell} (a)_i& \le& \kappa^{(0)}_{p,\delta}\,  n \Gamma(p+1)\xi^p
\delta  (1-\xi^{-\delta})^n \frac{1}{(n+\lceil p\rceil )\cdots
(n+1)n}\sum_{j=\ell}^{n} \left(\begin{smallmatrix} n+\lceil p\rceil \\ j+\lceil p\rceil  \end{smallmatrix}\right)(\xi^{\delta}-1)^{-j}j^{\lceil p\rceil -p}\\
&\le&  \kappa^{(0)}_{p,\delta} \, \Gamma(p+1)\xi^p \delta  (1-\xi^{-\delta})^n \frac{n^{\lceil p\rceil -p}}{(n+\lceil p\rceil )\cdots (n+1)}(\xi^{\delta}-1)^{\lceil p\rceil }\Big(1+(\xi^{\delta}-1)^{-1}\Big)^{n+\lceil p\rceil }.  
\end{eqnarray*}
Now 
\[
 (1-\xi^{-\delta})^n \xi^p(\xi^{\delta}-1)^{\lceil p\rceil} \Big(1+(\xi^{\delta}-1)^{-1}\Big)^{n+\lceil p\rceil }= \xi^{p + \delta\lceil p\rceil}
\]
%
so that, using that $\xi\ge 1$ and $ \delta <\frac{\eta}{\lceil p\rceil}$,  we get $\xi^{p + \delta\lceil p\rceil}
\le  \xi^{p+\eta}$ which in turn implies   
\[
\sum_{i=0}^{n-\ell} (a)_i  \le  \kappa^{(0)}_{p,\delta}  \, \delta\,   \Gamma(p+1) \xi^{p+\eta} \frac{1}{n^p}.
\]

%
Let us pass now to the second sum involving $(b)_i$.  First note that, on the event $\displaystyle\Big \{Y^{(n)}_i \le \xi\le \frac{Y^{(n)}_i+Y^{(n)}_{i+1}}{2}\Big\}$ (which is clearly included in $\big\{Y^{(n)}_i \le \xi\le Y^{(n)}_{i+1}\big\}$), one has $(\xi-Y^{(n)}_i)^p(Y^{(n)}_{i+1}-\xi)\le (\xi-Y^{(n)}_i)(Y^{(n)}_{i+1}-\xi)^p$
 so that, owing to what precedes, we can focus on $\displaystyle \sum_{i=0}^{n-\ell} (\widetilde b)_i$ where 
 \[
  (\widetilde b)_i:=\E \left((\xi-Y^{(n)}_i)^p\mbox{\bf 1}_{\big\{  \frac{Y^{(n)}_i+Y^{(n)}_{i+1}}{2}\le \xi\le Y^{(n)}_{i+1}\big\}}\right)\ge \E \left(\frac{(Y^{(n)}_{i+1}-\xi)(\xi-Y^{(n)}_i)^p}{Y^{(n)}_{i+1}-Y^{(n)}_i}\mbox{\bf 1}_{\big\{  \frac{Y^{(n)}_i+Y^{(n)}_{i+1}}{2}\le \xi\le Y^{(n)}_{i+1}\big\}}\right).
 \]
 
This time we will analyze  successively the sum over $i=1,\ldots,n-\ell$ and the case $i=0$.
\begin{eqnarray*}
\sum_{i=1}^{n-\ell} (\widetilde b)_i&=& \delta^2 n(n-1)
\int\!\!\int_{\{1\le u\le \xi\le v\le 2\xi-u\}}\hskip -0.5 cm du\,dv\,
(uv)^{-\delta-1}(\xi-u)^p \sum_{i=1}^{n-\ell} \left(\begin{smallmatrix} n-2\\ i-1
\end{smallmatrix}\right)v^{-\delta(n-2-(i-1))} (1-u^{-\delta})^{i-1}
\\
&\le& \delta^2 n(n-1) \int\!\!\int_{\{1\le u\le \xi\le v\le
2\xi-u\}}du\,dv(uv)^{-\delta-1}(\xi-u)^p(1-u^{-\delta} +v^{-\delta})^{n-2}\\
&\le &  \delta^2 n(n-1) \int_1^{\xi} du u^{-\delta-1}(\xi-u) ^p  \int_{ 
\xi}^{2\xi-u} dv\, v^{-\delta-1}e^{-(n-2)(u^{-\delta}-v^{-\delta})}\\
&=&  \delta^2 n(n-1) \int_1^{\xi} du u^{-\delta-1}(\xi-u) ^p
e^{-(n-2)u^{-\delta}} \int_{  \xi}^{2\xi-u} dv\, v^{-\delta-1}e^{(n-2)v^{-\delta}}
\end{eqnarray*}
where we used in the  in the second line that $n-\ell-1\le n-2$ since $\ell\ge 1$. Setting $v= y^{-\frac{1}{\delta}}$ yields
\begin{eqnarray*}
 \int_{  \xi}^{2\xi-u}  v^{-\delta-1}e^{(n-2)v^{-\delta}}\, dv&=& \frac{1}{\delta} \int_{(2\xi-u)^{-\delta}}^{\xi^{-\delta}}e^{(n-2)y}dy\\
 &\le& \frac{1}{\delta} \big(\xi^{-\delta} -(2\xi-u)^{-\delta}\big)e^{(n-2)\xi^{-\delta}}\\
 &\le& (\xi-u) \xi^{-\delta-1}e^{(n-2)\xi^{-\delta}}
\end{eqnarray*}
where we used in the last line  the fundamental formula of Calculus. Consequently, 
\begin{eqnarray*}
\sum_{i=1}^{n-\ell} (\widetilde b)_i&\le&  n(n-1) \delta^2 \xi^{-\delta-1}\int_1^{\xi} u^{-\delta-1} (\xi-u)^{p+1} e^{-(n-2)(u^{-\delta}-\xi^{-\delta})}du\\
&=& n(n-1) \xi^{-\delta-1} \delta \int_0^{(n-2)(1-\xi^{-\delta})}  \Big(\xi-\big(\frac{x}{n-2}+\xi^{-\delta}\big)^{-\frac {1}{\delta}}\Big)^{p+1} e^{-x}\frac{dx}{n-2}
\end{eqnarray*}
where we   put $u=\big(\frac{x}{n-2}+\xi^{-\delta}\big)^{-\frac{1}{\delta}}$.
Now, applying again fundamental formula of Calculus to the function
$z^{-\frac{1}{\delta}}$ yields,
\[
\xi-\Big(\frac{x}{n-2}+\xi^{-\delta}\Big)^{-\frac {1}{\delta}} = (\xi^{-\delta})^{-\frac{1}{\delta}}-\Big(\frac{x}{n-2}+\xi^{-\delta}\Big)^{-\frac {1}{\delta}}\le \frac{x}{\delta(n-2)}\xi^{\delta+1}
\]
\begin{eqnarray*}
\mbox{so that  }\quad \sum_{i=1}^{n-\ell} (\widetilde b)_i & \le&   \frac{n(n-1)}{(n-2)^{p+2}} \delta^{-p} \xi^{(p+1)(\delta+1)-(\delta+1)} \int_0^{(n-2)(1-\xi^{-\delta})} x^{p+1} e^{-x}dx \\
&\le & \kappa^{(1)}_{p,\delta} \Gamma(p+2) n^{-p} \xi^{p(\delta+1)}
\end{eqnarray*}
for some constant $\kappa^{(1)}_{p,\delta} > 0$.

When $i=0$, keeping in mind that $Y^{(n)}_1 = \min_{1\le i\le n}Y_i$, 
\begin{eqnarray*}
(\widetilde b)_0& \le& (\xi-1)^p \Prob(\xi \le Y^{(n)}_1\le 2\xi-1)= (\xi-1)^p \big(\xi^{-n\delta}-(2\xi-1)^{-n\delta}\big) \\
&\le& n\delta (\xi-1)^{p+1} \xi^{-n\delta-1}=  n\delta \xi^{p(1+\delta)} g(1/\xi)
\end{eqnarray*}
where $g(u) = (1-u)^{p+1}u^{(n+p)\delta}$, $u\!\in (0,1)$. One checks that $g$ attains its maximum over $(0,1]$ at $u^*= \frac{(n+p)\delta}{(n+p)\delta +p+1}$
so that
\[
\sup_{u\in (0,1]}g(u) = g(u^*)= \left(\frac{p+1}{(n+p)\delta +p+1}\right)^{p+1}(u^*)^{(n+p)\delta} \le  \left(\frac{1}{1+\frac{n+p}{p+1}\delta }\right)^{p+1}.
\]
Finally, there exists a real constant $\kappa^{(2)}_{p,\delta}>0$ such that 
\[
(\widetilde b)_0 \le \xi^{p(\delta+1)} \frac{\delta n}{(1+\frac{n+p}{p+1}\delta)^{p+1}}\le \kappa^{(2)}_{p,\delta}  \xi^{p(\delta+1)}  n^{-p}.
\]

As concerns the $(c)_{n-\ell+1}$ term, we proceed as follows. 
\begin{eqnarray*}
\E\Big(\big ( \xi-Y^{(n)}_{n-\ell+1}\big)^p\mbox{\bf 1}_{\{\xi \ge Y^{(n)}_{n-\ell+1}\}} \Big)&\le&  \xi ^p \Prob(\xi \ge Y^{(n)}_{n-\ell+1})\\
&\le& \xi^{p(1+\delta)}  \E (Y^{(n)}_{n-\ell+1})^{-p\delta}.
\end{eqnarray*}
Note that 
\begin{eqnarray*}
\E (Y^{(n)}_{n-\ell+1})^{-p\delta} &=&\frac{\Gamma(n+1)}{\Gamma(n-\ell+1)\Gamma(\ell)}\int_0^1 (1-v)^{n-\ell}v^{\ell+p-1}dv =\frac{\Gamma(n+1)}{\Gamma(\ell)} \frac{\Gamma(\ell+p)  }{\Gamma(n+p+1)}\\
&\sim& \frac{\Gamma(\ell+p)}{\Gamma(\ell)}n^{-p} = O(n^{-p}).
\end{eqnarray*}
Finally, for every $ \xi \ge 1$, 
\[
(c)_{n-\ell+1}\le \kappa_{p,\delta}^{(3)}\xi^{p(1+\delta)}n^{-p}.
\]
Consequently, there exists a real constant  $\kappa_{p,\eta} =
\max_{j=0,\ldots,3} \kappa^{(j)}_{p, \delta}>0$ such that for
every $n\ge n_{p,\eta}= \ell(\eta,p)\vee 3$,
\[
 \forall\, \xi \ge 1,\qquad n^p \,\E\,
 \bar F^p_{n}(\xi, Y^{(n)}_0,\ldots,Y^{(n)}_{\widetilde n+1})\le
 \kappa_{p,\eta} \,\xi^{p+\eta}
\]
since $p\,\delta \le \eta$. Hence for every r.v.  $X$, we derive by integrating in $\xi\!\in [1,+\infty)$ with respect to $\Prob_{_X}(d\xi)$:
\[
n^p \inf_{(1,x_2,\ldots,x_n)\in {\cal I}_n}\E\, \bar F^p_{n}(X,1,x_2,\ldots,x_n)\le  n^p \E\, \bar F^p_{n}(X,Y^{(n)}_0,\ldots,Y^{(n)}_{\widetilde n+1})\le \kappa_{p,\eta}\,\E \,X^{p+\eta}.
\]

 \noindent {\sc Step~3.} If $X$ is a non-negative random variable, applying the second step to $X+1$ and using the scaling property $(i)$ satisfied by $F_{p,n}$ yields for $n\ge  n_{p,\eta}$ (as defined in Step~2), 
\begin{eqnarray*} 
 \inf_{ (0,x_2,\ldots,x_n)\in {\cal I}_n}\|\bar F_{p,n}(X,0,x_2,\ldots,x_n)\|_{L^p} &=& \inf_{ (1,x_2,\ldots,x_n)\in {\cal I}_n}\|\bar F_{p,n}(X+1,1,\ldots,x_n)\|_{L^p} \\
 &\le&  \kappa^{1/p}_{p,\eta}\frac{\|1+X\|_{L^{p+\eta}}^{1+\frac{\eta}{p}}}{n}\\
 &\le&  C^{(0)}_{p,\eta}\frac{(1+ \|X\|_{L^{p+\eta}}^{1+\frac{\eta}{p}})}{n}\; \mbox{ with } C^{(0)}_{p,\eta}= (2^{1+\eta}\kappa_{p,\eta})^{\frac 1p}.
 \end{eqnarray*}
 We may assume that $\|X\|_{L^{p+\eta}}\!\in(0,\infty)$. Then, applying the above bound to the non-negative random variable $\widetilde X= \frac{X}{\|X\|_{L^{p+\eta}}}$ taking again advantage of the scaling property $(i)$, we obtain
 \begin{eqnarray*}
  \inf_{ (0,x_2,\ldots,x_n)\in {\cal I}_n}\|\bar
  F_{p,n}(X,0,x_2,\ldots,x_n)\|_{L^p}&=&  \|X\|_{L^{p+\eta}} \inf_{
  (0,x_2,\ldots,x_n)\in {\cal I}_n}\|\bar F_{p,n}(\widetilde
  X,0,x_2,\ldots,x_n)\|_{L^p}\\  &\le & \|X\|_{L^{p+\eta}}
  C^{(0)}_{p,\eta}\frac{1+1}{n} = 2C^{(0)}_{p,\eta}
  \,\|X\|_{L^{p+\eta}} \frac{1}{n}.
 \end{eqnarray*}
%

 \noindent {\sc Step~4.} Let $X$ be a real-valued random variable  and let for every integer $n\ge 1$, $x_1,\ldots,x_n\!\in (-\infty,0)$,  $x_{n+1}=0$  and $x_{n+2},\ldots,x_{2n+1}\!\in (0,+\infty)$. It follows from the additivity property that  that 
\begin{eqnarray*}  
 \nonumber \bar F^p_{2n+1}(X, x_1,\ldots,x_{2n+1}) &=&\bar F^{p}_{n+1}(X_+, x_{n+1},\ldots,x_{2n+1})\mbox{\bf 1}_{\{X\ge 0\}}  \\
 &&+  \bar F^{p}_{n+1}(-X_-, x_{1},\ldots,x_{n+1})\mbox{\bf 1}_{\{X<0\}} \\
\label{IneqA} &=& \bar F^{p}_{n+1}(X_+, x_1,\ldots,x_{n+1})\mbox{\bf 1}_{\{X
\ge 0\}}   +  \bar F^{p}_{n+1}(X_-, -x_{n+1},\ldots,-x_{1})\mbox{\bf 1}_{\{X<0\}}.
 \end{eqnarray*} 
Consequently, using that $X_+\times X_- \equiv 0$ and that $x_{n+1}=0$, we get 
\begin{eqnarray*}  
  \!\!\!\!\inf_{\substack{(x_1,\ldots,x_{2n+1})\in {\cal
  I}_{2n+1}\\ x _{n+1}=0}}\|\bar F_{p,2n+1}(X,
  x_1,\ldots,x_{2n+1})\|^p_{L^p} \!\!&\!\!\le\!\!&\!\!  \inf_{ (0,y_2,\ldots,y_{n+1})\in {\cal I}_{n+1}} \|\bar F_{p,n+1}(X_+,0,y_2,\ldots,y_{n+1}) \|^p_{L^p} \\
  &&+ \inf_{ (0,y_2,\ldots,y_{n+1})\in {\cal I}_{n+1}}\|\bar F_{p,n}(X_-,0,y_2,\ldots,y_{n+1}) \|^p_{L^p}  .
  \end{eqnarray*} 
   Hence,  it follows from Step~2 that, for every  $n\ge n_{p,\eta}-1$,
\begin{eqnarray*}  
 \inf_{ (x_1,\ldots,x_{2n+1})\in {\cal I}_{2n+1}}\|\bar
 F_{p,2n+1}(X,x_1,\ldots,x_{2n+1})\|^p_{L^p}   &\le & \Big(
 \|X_-\|^p_{L^{p+\eta}} +\|X_+\|^p_{L^{p+\eta}}
 \Big)\Big(\frac{2C^{(0)}_{p,\eta} }{n+1}\Big)^p.
 \end{eqnarray*} 
 Now using that $(a+b)\le2^{1-\frac 1q} (a^q+b^q)^{\frac 1q}$, $a,b\ge 0$, with $q=1+\frac{\eta}{p}\ge 1$, we derive that
 \[
  \|X_-\|^p_{L^{p+\eta}} +\|X_+\|^p_{L^{p+\eta}} \le 2^{\frac{\eta}{p+\eta}} \Big(\|X_-\|^{p+\eta}_{L^{p+\eta}} +\|X_+\|^{p+\eta}_{L^{p+\eta}} \Big)^{\frac{p}{p+\eta}}= 2^{\frac{\eta}{p+\eta}}  \|X\|_{L^{p+\eta}}^p
 \]
 since $X_-\times X_+\equiv 0$. 
Now, the monotonicity property~(\ref{monotonie1})  implies that, for every $n\ge 2\,n_{p,\eta}$,
 \[
 \bar d_{n,p}(X)=  \inf_{ (x_1,\ldots,x_{n})\in {\cal I}_{n}}\|\bar F_{p,n}(X,x_1,\ldots,x_{n})\|_{L^p}\le 2^{\frac{\eta}{p(p+\eta)}}2C^{(0)}_{p,\eta}\frac{\|X\|_{L^{p+\eta}} }{n}.
 \]
Still calling upon~(\ref{monotonie1}), we note that, for every $n\!\in\{1,\ldots,2n_{p,\eta}\}$, $\bar d_{n,p}(X)\le \bar d_{1,p}(X) = \inf_{x \in \R}\|X-x_1\|_{L^p}\le \|X\|_{L^p}$ so that
\[
\bar d_{n,p}(X)\le 2n_{p,\eta} \frac{ \|X\|_{L^{p+\eta}}}{n}
\]
which completes the proof by setting $C_{p,\eta} = \max\big(2n_{p,\eta},2^{1+\frac{\eta}{p(p+\eta)}}C^{(0)}_{p,\eta}\big)$.
%
 \end{proof}

 \subsection{A $d$-dimensional non-asymptotic upper-bound for the dual
 quantization error}\label{sec:ddimBound}

Now, combining Theorem~\ref{Pierce0} and Proposition~\ref{prop:rappels}$(b)$, we are in position to show Proposition~\ref{PdtQErrop} (the
$d$-dimensional version of the extended Pierce Lemma)  which provides a non-asymptotic upper-bound at the exact rate for dual quantization error moduli.

%
 
\medskip
\noindent {\it Proof of Proposition~\ref{PdtQErrop}.} $(a)$ First note that $ \bar d_{n,p}(X)= \bar d_{n,p}(X-a)$, $a\!\in \R^d$ (invariance by translation)    so we may assume that $X$  is $L^{p+\eta}$-centered $i.e.$ $\sigma_{p+\eta,\|.\|}(X)= \|X\|_{L^{p+\eta}}$. When $d=1$, Theorem~\ref{Pierce0} solves the problem.

%
%
Let $d\ge 2$. Let  $X=(X^1,\ldots,X^d)$ ($X^i$ components of $X$).   It follows form Proposition~\ref{prop:rappels}
 that, if $\Gamma=\prod_{1\le i \le d} \Gamma_i$, with $\Gamma_i\subset \R$, $|\Gamma_i|=n_i$ with $n_1\cdots n_d\le n$. Then for every $\xi=(\xi^1,\ldots,\xi^d)\!\in \R^d$
 \[
  \bar F^p_{\norm{.}}(\xi;\Gamma) \le C_{p, \norm{.}}\bar F^{p}_{\ell^p}(\xi;\Gamma) = \sum_{j=1}^d\bar F^p(\xi^j, \Gamma_j) 
 \]
where $C_{p,\norm{.}}\!=\! \sup_{|\xi|_{\ell^p}=1}\|\xi\|^p$. Integrating with respect to the distribution of $X$ yields  $\displaystyle \bar d^p (X,\Gamma)\!\le C_{p, \norm{.}} \sum_{j=1}^d \bar d^p(X^j, \Gamma_j)$ 
which in turn easily implies  
\[
\bar d_n^p(X)\le C_{p, \norm{.}} \sum_{j=1}^d \bar d_{n_j}^p(X^j). 
\]
Now set $n_j = \lfloor n^{\frac 1d}\rfloor$, $j=1,\ldots,d$. It follows from Theorem~\ref{Pierce0} that
%
 \begin{eqnarray*}
 \bar  d_{n}^p(X) & \le &  C^p_{p, \norm{.}}C_{p,\eta}  \sum_{j=1}^d \|X^{j}\|^p_{L^{p+\eta}}\lfloor n^{\frac 1d}\rfloor^{-p} \\
 & \le &  C_{p, \norm{.}} C_{p,\eta} \sup_{k\ge 2}\Big(\frac{k^{\frac 1d}}{k^{\frac 1d}-1}\Big)^p n^{-\frac pd} \sum_{j=1}^d \|X^{j}\|^p_{L^{p+\eta}}\\
& \le &  C_{p, \norm{.}} C_{p,\eta} 2^p n^{-\frac pd}d^{\frac{\eta}{p+\eta}}\E |X|^{p+\eta}_{\ell^{p+\eta}}\\
&\le&  d^{\frac{\eta}{p+\eta}} C_{p, \norm{.}} C_{p,\eta} \,2^p \widetilde C_{\norm{.}, p+\eta}  \|X\|_{L^{p+\eta}}^{p+\eta}n^{-\frac pd}
 \end{eqnarray*}
 where $\widetilde C_{\norm{.}, r}=\sup_{\|x\|=1}|x |^r_{\ell^r}$, $r>0$. 
 
\smallskip
\noindent $(b)$ Let $C$ be  the smallest hypercube withe edges
parallel to the coordinate axis containing $\conv({\rm Supp}(\Prob_{_X}))$. Up
to a translatation, which  leaves $d_{n,p}(X)$ invariant, we may assume that $C=[0,L]^d$ where $0\le L\le {\rm diam}_{\norm{.}}({\rm Supp}(\Prob_{_X}))$. The conclusion follows by integrating Inequality~(\ref{eq:prodFp}) with respect to $\Prob_{_X}(d\xi)$ with $m=\lfloor n^{\frac 1d} \rfloor$ and following the lines of the proof of claim~$(a)$. $\qquad \Box$ 

\section{Proof of the sharp rate theorem}\label{sec:rate}

On the way to proving the sharp rate theorem, we have to establish few additional 
propositions.

\begin{prop}[Sub-linearity]\label{prop:subLin}
Let $\Probb = \sum_{i=1}^m s_i \Probb_i$ where $s_1,\ldots,s_m\!\in [0,1]$,  $\sum_{i=1}^m s_i = 1$ and let $n_1,\ldots,n_m\!\in \N$ such that $\sum_{i=1}^m n_i \leq n$.
Then
\[
d_{n}^p(\Probb) \leq \sum_{i=1}^m s_i\, d_{n_i}^p(\Probb_i).
\]
\end{prop}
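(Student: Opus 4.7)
The strategy is the standard one for super/sub-additive quantization bounds: build a good grid for the mixture by taking the union of good grids for each component, and exploit the monotonicity of $F_p$ under grid inclusion (Proposition \ref{prop:PtsInsertion}).

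First, I may assume $d_{n_i,p}(\Probb_i)<+\infty$ for every $i$, otherwise the right hand side is $+\infty$ and there is nothing to prove; this ensures in particular, via Theorem \ref{thm:DQLinkStat}, that $\Probb_i$ is compactly supported and that $n_i\ge d+1$. Fix $\varepsilon>0$ and pick for each $i\in\{1,\ldots,m\}$ a grid $\Gamma_i\subset\R^d$ with $|\Gamma_i|\le n_i$, $\supp(\Probb_i)\subset\conv(\Gamma_i)$, and
\[
d_p^p(\Probb_i;\Gamma_i)\le d_{n_i,p}^p(\Probb_i)+\varepsilon.
\]
Set $\Gamma := \bigcup_{i=1}^m \Gamma_i$, so that $|\Gamma|\le\sum_i n_i\le n$ and $\supp(\Probb)\subset\bigcup_i \supp(\Probb_i)\subset \conv(\Gamma)$.

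Next, by the monotonicity of the local dual quantization error under grid enlargement (Proposition \ref{prop:PtsInsertion}), for every $\xi\in\conv(\Gamma_i)$,
\[
F_p^p(\xi;\Gamma)\le F_p^p(\xi;\Gamma_i).
\]
Since $\Probb_i(\conv(\Gamma_i))=1$, integrating against the mixture decomposition $\Probb=\sum_i s_i\Probb_i$ yields
\[
d_p^p(\Probb;\Gamma)=\int F_p^p(\xi;\Gamma)\,\Probb(d\xi)=\sum_{i=1}^m s_i\int F_p^p(\xi;\Gamma)\,\Probb_i(d\xi)\le \sum_{i=1}^m s_i\int F_p^p(\xi;\Gamma_i)\,\Probb_i(d\xi)=\sum_{i=1}^m s_i\,d_p^p(\Probb_i;\Gamma_i).
\]
Using $|\Gamma|\le n$ on the left and the choice of $\Gamma_i$ on the right,
\[
d_{n,p}^p(\Probb)\le d_p^p(\Probb;\Gamma)\le \sum_{i=1}^m s_i\bigl(d_{n_i,p}^p(\Probb_i)+\varepsilon\bigr).
\]
Letting $\varepsilon\downarrow 0$ gives the claim.

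The argument is essentially mechanical once one observes that the dual quantization functional $F_p(\cdot;\Gamma)$ is monotone non-increasing in $\Gamma$; the only subtle point is the convex-hull constraint, which is why I emphasize $\supp(\Probb_i)\subset\conv(\Gamma_i)\subset\conv(\Gamma)$ to keep all local errors finite. No issue of measurable selection or optimality of $\Gamma_i$ arises since an $\varepsilon$-optimization followed by $\varepsilon\downarrow0$ is enough.
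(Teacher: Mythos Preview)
Your proof is correct and follows essentially the same route as the paper's: take $\varepsilon$-optimal grids $\Gamma_i$ for each $\Probb_i$, use the union $\Gamma=\bigcup_i\Gamma_i$ as a candidate grid for $\Probb$, apply the monotonicity of $F_p$ under grid enlargement (Proposition~\ref{prop:PtsInsertion}) componentwise in the mixture, and let $\varepsilon\to 0$. The only cosmetic differences are your use of an additive $\varepsilon$ (versus the paper's multiplicative $(1+\varepsilon)$) and your explicit handling of the convex-hull/finiteness issue, which the paper leaves implicit.
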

\begin{proof}
For $\varepsilon > 0$ and every $i = 1, \ldots, m$, choose $\Gammai \subset
\R^d,\,\abs{\Gammai} \leq n_i$ such that 
\[
d^p(\Probb_i; \Gammai) \leq (1+\varepsilon)\, d_{n_i}^p(\Probb_i).
\]
%
Set $\Gamma =
\bigcup_{i=1}^m \Gammai$~; from Proposition~\ref{prop:PtsInsertion} we get
\begin{eqnarray*}
\begin{split}
  d_{n}^p(\Probb) & \leq d_{n}^p(\Probb; \Gamma) = \sum_{i=1}^m s_i \int F^p(\xi; \Gamma)\,\Probb_i(d\xi)\\
  & \leq \sum_{i=1}^m s_i \int F^p(\xi; \Gammai)\,\Probb_i(d\xi)  \leq (1+\varepsilon) \sum_{i=1}^m s_i\, d_{n_i}^p(\Probb_i).
\end{split}
\end{eqnarray*}
Letting $\varepsilon\to 0$ completes the proof.
\end{proof}

\begin{remark}
Proposition~\ref{prop:subLin} does not hold for $\bar d_n^p$ since $\bar F^p$ is not decreasing for the inclusion order on grids.
%
This induces substantial difficulties in the proof of the
sharp rate compared to the regular quantization setting.
\end{remark}

\begin{prop}[Scaling property]\label{prop:scaleProp}
Let $C= a+\rho [0,1]^d$ ($a\!\in \R^d$, $\rho>0$) be a $d$-dimensional hypercube, with edges parallel to the coordinate axis and edge-length $\rho > 0$. Then
\[
	d_{n,p}(\U(C)) = \rho \cdot d_{n,p}(\Unifd).
\]
\end{prop}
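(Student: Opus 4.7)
The plan is to exploit the affine bijection between $C$ and $[0,1]^d$ given by $\phi(\zeta) = a + \rho\,\zeta$, and to show that this bijection induces a grid-preserving isomorphism of the local dual quantization problems that scales errors exactly by $\rho$.

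First, I would check the local identity at the level of $F_p$. Given any finite grid $\Gamma' = \{y_1,\ldots,y_n\} \subset [0,1]^d$, set $\Gamma := \phi(\Gamma') = \{a + \rho y_1, \ldots, a + \rho y_n\} \subset C$. For $\xi = a + \rho\zeta \in C$, a vector $\lambda \in [0,1]^n$ with $\sum_i \lambda_i = 1$ satisfies $\sum_i \lambda_i (a + \rho y_i) = \xi$ if and only if $\sum_i \lambda_i y_i = \zeta$, because $\sum_i \lambda_i = 1$ allows one to cancel the $a$-contribution. Under this bijection of feasible $\lambda$'s, each summand in the objective transforms as
\[
\lambda_i \norm{\xi - (a+\rho y_i)}^p = \lambda_i\,\rho^p\,\norm{\zeta - y_i}^p,
\]
so taking the infimum yields $F_p^p(\xi;\Gamma) = \rho^p\, F_p^p(\zeta;\Gamma')$, i.e. $F_p(\xi;\Gamma) = \rho \cdot F_p(\zeta;\Gamma')$.

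Next, I would integrate against the distributions. If $X \sim \U(C)$, then $Y := \rho^{-1}(X-a) \sim \U([0,1]^d)$, so
\[
d_p^p(X;\Gamma) = \E\, F_p^p(X;\Gamma) = \rho^p\,\E\, F_p^p(Y;\Gamma') = \rho^p\, d_p^p(Y;\Gamma').
\]
Hence $d_p(\U(C);\Gamma) = \rho \cdot d_p(\U([0,1]^d);\Gamma')$.

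Finally, I take infima over grids of cardinality at most $n$. The map $\Gamma' \mapsto \phi(\Gamma')$ is a bijection between finite subsets of $[0,1]^d$ and finite subsets of $C$ that preserves cardinality, so ranging $\Gamma'$ over grids of size $\leq n$ in $[0,1]^d$ is equivalent to ranging $\Gamma = \phi(\Gamma')$ over grids of size $\leq n$ in $C$. We may also restrict the infimum defining $d_{n,p}(\U(C))$ to grids $\Gamma \subset C$ (grids outside $C$ can only fail to contain $\supp(\U(C))$ in their convex hull or else can be replaced without increasing the error, but in any case the infimum over all $\Gamma \subset \R^d$ of size $\leq n$ coincides with the infimum over $\Gamma \subset C$ up to the usual convex-hull requirement ensuring finiteness). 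Taking infima thus yields $d_{n,p}(\U(C)) = \rho \cdot d_{n,p}(\U([0,1]^d))$. The only subtle point is this last reduction to $\Gamma \subset C$; I expect no real obstacle, since for any optimal or near-optimal grid the support of $\U(C)$ must lie in $\conv(\Gamma)$, and one can then assume without loss of generality that $\Gamma \subset \conv(\Gamma) \subset C$ by projecting excess points onto $C$ (which can only decrease or preserve $F_p$).
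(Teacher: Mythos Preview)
Your core computation --- the local identity $F_p(\xi;\phi(\Gamma'))=\rho\,F_p(\zeta;\Gamma')$ and its integration against $\U(C)$ --- is correct and is exactly what the paper does. The gap is in your last paragraph. By starting with $\Gamma'\subset[0,1]^d$ you force yourself to argue that the infimum defining $d_{n,p}(\U(C))$ can be restricted to grids $\Gamma\subset C$, and your justification for that is shaky: the requirement is $C\subset\conv(\Gamma)$, not $\conv(\Gamma)\subset C$, and ``projecting excess points onto $C$'' can shrink $\conv(\Gamma)$ and change the feasibility set for $\lambda$, so there is no obvious monotonicity of $F_p$ under that operation. Proving that one may take $\conv(\Gamma)=C$ without loss is in fact a nontrivial statement (compare the firewall lemma later in the paper).

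The repair is immediate and makes the whole detour unnecessary: the affine map $\phi(\zeta)=a+\rho\zeta$ is a bijection of $\R^d$ onto itself, so $\Gamma'\mapsto\phi(\Gamma')$ is a cardinality-preserving bijection between \emph{all} finite subsets of $\R^d$, not just those inside $[0,1]^d$ and $C$. Nothing in your derivation of $F_p(\xi;\phi(\Gamma'))=\rho\,F_p(\zeta;\Gamma')$ uses that the $y_i$ lie in $[0,1]^d$; it holds for arbitrary $\Gamma'\subset\R^d$. Hence $d_p(\U(C);\phi(\Gamma'))=\rho\,d_p(\U([0,1]^d);\Gamma')$ for every finite $\Gamma'\subset\R^d$, and taking the infimum over $|\Gamma'|\le n$ on both sides gives $d_{n,p}(\U(C))=\rho\,d_{n,p}(\U([0,1]^d))$ directly, which is precisely how the paper concludes.
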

\noindent {\it Proof.} Keeping in mind that $\lambda_d([0,\rho]^d)=\rho^d$, it
holds that
\begin{eqnarray*}
\begin{split}
\hskip 1,5 cm  \dqp(\U(C); \{a+\rho x_1, \ldots, a+ \rho x_n\}) & = \int_{[0, \rho]^d} \underset{
  \text{s.t. } \left[  \begin{smallmatrix} \rho x_1 & \ldots & \rho x_n\\
          1 & \ldots & 1\\
        \end{smallmatrix}  \right] \lambda =
      \left[\begin{smallmatrix}
         \xi\\ 1\\
        \end{smallmatrix} \right],\,
      \lambda \geq 0 }{\min_{\lambda\in \R^n }\sum_{i=1}^n \lambda_i
      \,
  \norm{\xi-\rho x_i}^p}
  \frac{\lambda_d(d\xi)}{\lambda_d\bigl([0,\rho]^d\bigr)}\\ & =  \int_{[0,
  1]^d} \underset{ \text{s.t. } \left[  \begin{smallmatrix} \rho x_1 & \ldots &
  \rho x_n\\ 1 & \ldots & 1\\
        \end{smallmatrix}  \right] \lambda =
      \left[\begin{smallmatrix}
         \rho u\\ 1\\
        \end{smallmatrix} \right],\,
      \lambda \geq 0 }{\min_{\lambda\in \R^n }\sum_{i=1}^n \lambda_i
      \,
  \norm{\rho u-\rho x_i}^p} \lambda_d(du)\\
  & = \rho^p \int_{[0, 1]^d} \LP{u} \lambda_d(du)\\
  & = \rho^p \cdot \dqp(\Unifd; \{x_1, \ldots, x_n\}).\hskip 3 cm \Box
\end{split} 
\end{eqnarray*}

The following lemma shows that also for $\bar d_{n,p}$ the convex hull 
spanned by a sequence of ``semi-optimal'' quantizers asymptotically covers the
interior of $\supp(\ProbX)$. This fact is trivial for $d_{n,p}$ if $X$ has  a compact
support.  

\begin{lemma}\label{lem:radius} Let $K  = \conv\{a_1, \ldots, a_k\} \subset  \opSuppP$ be a set with $\mathring
K \neq \emptyset$ and let $\Gamman$ be a sequence of quantizers such that $\bar
d_{n,p}(\Probb,\Gamman)\to 0$ as $n\to +\infty$. Then there exists $n_0\in\N$ such that for all $n\geq n_0$
\[
K \subset \conv(\Gamman).
\]
\end{lemma}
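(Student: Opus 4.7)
The plan is to argue by contradiction, using the elementary inequality $\bar F_p(\xi;\Gamma) \ge \dist(\xi, \conv(\Gamma))$ for every $\xi\in\R^d$ (trivial when $\xi\in\conv(\Gamma)$, and a consequence of $\Gamma\subset\conv(\Gamma)$ otherwise). Integrating against $\Probb$ shows that the hypothesis $\bar d_p(\Probb,\Gamma_n)\to 0$ implies $\int \dist(\xi,\conv(\Gamma_n))^p\,d\Probb(\xi)\to 0$, which is the only consequence of the hypothesis I would use.

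Now suppose for contradiction that $K\not\subset\conv(\Gamma_n)$ along some subsequence. Since $K=\conv\{a_1,\ldots,a_k\}$ and $\conv(\Gamma_n)$ is convex, this forces at least one vertex $a_{j_n}\notin\conv(\Gamma_n)$; by pigeonhole, passing to a further subsequence, I may assume the excluded vertex is a fixed $a_j$ for every $n$. Let $\pi_n$ denote the metric projection of $a_j$ onto $\conv(\Gamma_n)$ and set $v_n := (a_j-\pi_n)/\|a_j-\pi_n\|$. The hyperplane orthogonal to $v_n$ through $\pi_n$ supports $\conv(\Gamma_n)$, so any $\eta$ with $\langle\eta-\pi_n, v_n\rangle\ge t$ satisfies $\dist(\eta,\conv(\Gamma_n))\ge t$.

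Next I would use that $a_j\in\opSuppP$ to fix $\rho>0$ with $B(a_j,\rho)\subset\supp(\Probb)$ and define $B_n:=B(a_j+(\rho/2)v_n,\rho/4)$. Every $\eta\in B_n$ lies in $B(a_j,3\rho/4)\subset\supp(\Probb)$ and satisfies $\langle\eta-\pi_n,v_n\rangle\ge \rho/4$, so $\dist(\eta,\conv(\Gamma_n))\ge\rho/4$. Hence $\int\dist(\xi,\conv(\Gamma_n))^p\,d\Probb(\xi)\ge (\rho/4)^p\,\Probb(B_n)$, and the contradiction will follow as soon as I produce a lower bound $\Probb(B_n)\ge c>0$ independent of $n$.

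This uniform bound is the only genuine difficulty: $\Probb$ may be singular and the displaced centre $a_j+(\rho/2)v_n$ moves over the unit sphere with $n$. I would handle it by a compactness argument on $S^{d-1}$: choose a finite cover $\{U_i\}_{i=1}^m$ of $S^{d-1}$ fine enough that, for each $i$ and each $v\in U_i$, the ball $B(a_j+(\rho/2)v_i,\rho/8)$ is contained in $B(a_j+(\rho/2)v,\rho/4)$, where $v_i$ is a fixed representative of $U_i$. Each inner ball is centred in $B(a_j,\rho)\subset\supp(\Probb)$ and thus carries positive $\Probb$-mass, so $c:=\min_i\Probb(B(a_j+(\rho/2)v_i,\rho/8))>0$ gives $\Probb(B_n)\ge c$ for every $n$. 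This closes the contradiction and yields the claimed $n_0$.
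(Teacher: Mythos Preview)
Your argument is correct and takes a genuinely different route from the paper's. The paper does not argue by separation. Instead, it first dilates $K$ about its centroid to a slightly larger polytope $\tilde K=\conv\{\tilde a_1,\ldots,\tilde a_k\}$ still contained in $\supp(\Probb)$, and then shows---by a simple \emph{fixed}-ball argument (if $B(\tilde a_i,\varepsilon_0)\cap\conv(\Gamma_{n'})=\emptyset$ along a subsequence, integrate $\bar F^p$ over $B(\tilde a_i,\varepsilon_0/2)$)---that each $\tilde a_i$ is the limit of a sequence $a_i^n\in\conv(\Gamma_n)$. It finishes by selecting an affinely independent subfamily and carrying out an explicit barycentric-coordinate computation to verify that every $\xi\in K$ is a \emph{convex} combination of the $a_j^n$ for large $n$, whence $K\subset\conv(\Gamma_n)$.

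Your approach trades the dilation-plus-barycentric step for a direct separating-hyperplane argument at an excluded vertex, at the cost of having to control a family of balls whose centres move on a sphere; the compactness argument you sketch handles this cleanly. The paper's approach, by contrast, avoids the moving-ball issue altogether (its ball $B(\tilde a_i,\varepsilon_0/2)$ is fixed once and for all) but must then do some linear algebra at the end. One minor remark: your projection/orthogonality step is Euclidean, whereas the ambient norm $\norm{\cdot}$ in the paper is arbitrary; this is harmless by equivalence of norms on $\R^d$, but you should make the passage explicit.
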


\noindent {\it Proof.} Set $a_0 = \frac{1}{k} \sum_{i=1}^k a_i$ and define for every $\rho > 0$
\[
\tilde K(\rho) = \conv\{\tilde a_1(\rho), \ldots, \tilde a_k(\rho)\}
\quad \text{with}\quad
\tilde a_i(\rho)= a_0 + (1+\rho) (a_i - a_0).
\]

Since $K\subset\opSuppP$ there exists  $\rho_0 > 0$ such that $\tilde K =
\tilde K(\rho_0) \subset \supp(\Probb)$. From now on, we denote $\tilde
a_i(\rho_0)$ by $\tilde a_i$.
Since moreover $\tilde a_i \!\in \supp(\Probb)$, there exists a sequence
$(a^{n}_i)_{n\ge 1}$ having values in $\conv(\Gamman)$ and converging to
$\tilde a_i$. Otherwise there would exist $\varepsilon_0>0$ and a subsequence
$(n')$ such that $B(\tilde a_i,\varepsilon_0)\subset (\conv(\Gamma_{\!n'}))^c$.
Then 
$$
\bar d_{n'}^p(X,\Gamma_{\!n'}) \ge \E\, {\rm dist}(X,\Gamma_{\!n'})^p\mbox{\bf 1}_{\{X\in B(\tilde a_i,\varepsilon_0/2)\}}
\left(\frac{\varepsilon_0}{2}\right)^p \Probb(B(\tilde a_i,\varepsilon_0/2))>0
$$
since $\tilde a_i\!\in {\rm supp}(\Probb)$. This contradicts the assumption on the sequence
$(\Gamman)_{n\ge 1}$.  

Since $K$ has a nonempty interior, it follows that $\adim
\{a_1, \ldots, a_k\} = \adim \{\tilde a_1, \ldots, \tilde a_k\} = d$. Consequently,
we may choose a subset $I^\ast \subset\{1, \ldots, k\}, \,\abs{I^\ast} = d+1$, so
that $\{\tilde a_j: j\in I^\ast\}$ is an affinely independent system in $\R^d$
and furthermore there exists  $n_0 \in \N$ such that the same holds for
$\{a^n_j: j\in I^\ast\}$, $n \geq n_0$.
Hence, we may write for $n\geq n_0$
\begin{equation}\label{eq:lemmaoneproofLS}
 \tilde a_i= \sum_{j\in I^\ast} \mu^{n,i}_ja^n_j,\quad
 \sum_{j\in I^\ast}\mu^{n,i}_j=1,\quad i=1,\ldots,k. 
 \end{equation}
 This linear system has the unique asymptotic solution $\mu^{\infty,i}_j =
 \delta_{ij}$ (Kronecker symbol), which implies $\mu^{n,i}_j\to \delta_{ij}$
when  $n\to+\infty$.

Now let $\xi\in K\subset \tilde K$ and write
\[
	\xi = \sum_{i=1}^k \lambda_i a_i\;
	\text{ for some }\; \lambda_i \geq 0, \sum_{i=1}^k \lambda_i = 1.
\]
One easily checks that it also holds 
\[
	\xi = \sum_{i=1}^k \tilde \lambda_i \tilde a_i\, \text{ with  } \, \tilde
	\lambda_i = \frac{\rho_0}{k(1+\rho_0)} + \frac{\lambda_i}{ 1+\rho_0} \geq \frac{\rho_0}{k(1+\rho_0)} > 0 \quad  \text{and}\quad \sum_{i=1}^k
	\tilde \lambda_i = 1.
\]

Furthermore, we may choose  $n_1 \geq n_0$ such that, for every $n\geq n_1$,
\[
\mu^{n,i}_i > \frac{1}{2} \quad \text{ and } \quad  \forall j \neq i,\; \abs{\mu^{n,i}_j} \leq
\frac{\rho_0}{4k(1+\rho_0)}.
\]

Using (\ref{eq:lemmaoneproofLS}),  this leads to
\[
 \xi = \sum_{j\in I^\ast} \Bigl(\sum_{i=1}^k \tilde \lambda_i  \mu^{n,i}_j
 \Bigr) a^n_j
\]
and 
\[
	\sum_{i=1}^k \tilde \lambda_i  \mu^{n,i}_j > \tilde \lambda_j  \mu^{n,j}_j -
	\sum_{i=1,i\neq j}^k \tilde \lambda_i  \abs{\mu^{n,i}_j} >
	\frac{\rho_0}{2k(1+\rho_0)} - \frac{\rho_0}{4k(1+\rho_0)} =
	\frac{\rho_0}{4k(1+\rho_0)} > 0, \;j \in I^\ast.
\]
Finally, one completes the proof by noting that 
$\displaystyle 	\sum_{j\in I^\ast} \sum_{i=1}^k \tilde \lambda_i  \mu^{n,i}_j = \sum_{i=1}^k \tilde
	\lambda_i \sum_{j\in I^\ast} \mu^{n,i}_j = 1.  \quad \Box$

As already said, Proposition~\ref{prop:subLin} does not hold anymore for $\bar
d_{n,p}$. As a consequence we have to establish a 
``firewall Lemma",
which will be a useful tool to overcome this problem  in the
non-compact setting.

\begin{lemma}[Firewall]\label{lem:firewall} Let $K \subset \R^d$ be compact and convex with $\mathring K \neq \emptyset$.
Moreover, let $\varepsilon > 0$ be small enough so that 
\[
	K_\varepsilon = \{ x\in K : \dist_{\ell^{\infty}}(x, K^c) \geq \varepsilon \} \neq \emptyset.
\]
Let $\Gae$ be a subset of the lattice $\alpha \Z^d$ with edge-length $\alpha > 0$
satisfying
\[
 K \setminus K_\varepsilon \subset \conv(\Gae)\;\mbox{ and }\; \forall\, x\!\in K\setminus K_{\varepsilon},\;{\rm
dist}_{\norm{\cdot}}(x,\Gamma_{\alpha,\varepsilon})\le C_{\norm{\cdot}}\alpha
\] 
where $C_{\norm{\cdot}} > 0$ is  a real  constant   only  depending on the norm $\norm{\cdot}$.

Then, for every grid $\Gamma\subset \R^d$ containing $K$ and every $\eta \in (0,1)$, it holds
\[
\forall\, \xi \in
K_\varepsilon,\quad 	\Fp(\xi; \Gamma) \geq \frac{1}{(1+\eta)^{p+d+1}} \Fp(\xi; (\Gamma \cap \mathring K)
	\cup \Gae )   - (1+\eta)^{-d-1}\eta^{-p} 
	(d+1)\, C^p_{\norm{\cdot}}  \alpha^p.
\]
\end{lemma}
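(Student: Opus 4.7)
My plan is to transform a near-optimal convex representation of $\xi$ by points of $\Gamma$ into one using only points of $\Gamma' := (\Gamma \cap \mathring K) \cup \Gae$, with a controlled inflation of the cost. Start from a near-minimizer $\xi = \sum_i \lambda_i x_i$ of $\Fp(\xi;\Gamma)$ and split the index set into $I_1 = \{i : x_i \in \mathring K\}$ and $I_2 = \{i : x_i \notin \mathring K\}$. For every $i \in I_2$, the segment $[\xi, x_i]$ exits $K$ at a unique point $y_i = \xi + s_i(x_i - \xi)$ with $s_i \in (0,1]$, and $y_i \in \partial K \subset K \setminus K_\varepsilon \subset \conv(\Gae)$. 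A local simplicial subdivision of $K \setminus K_\varepsilon$ induced by the lattice $\alpha \Z^d$ provides a convex representation $y_i = \sum_{w \in W_i}\mu_{i,w}\,w$ with $W_i \subset \Gae$ and $\|y_i - w\| \leq C_{\norm{\cdot}}\alpha$ for every $w \in W_i$.

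Solving $x_i = \xi + (1/s_i)(y_i - \xi)$, substituting into $\xi = \sum_i \lambda_i x_i$ together with the lattice expansion of each $y_i$, and regrouping yields
\[
\xi = \frac{1}{T}\Bigl[\sum_{i \in I_1}\lambda_i x_i + \sum_{i \in I_2}\sum_{w \in W_i}\frac{\lambda_i \mu_{i,w}}{s_i}\,w\Bigr], \qquad T := \sum_{i \in I_1}\lambda_i + \sum_{i \in I_2}\frac{\lambda_i}{s_i} \ge 1,
\]
which is a genuine convex combination of points in $\Gamma'$. Hence $\Fp(\xi;\Gamma')$ is bounded above by the resulting $p$-th moment. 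Combining the triangle inequality $\|\xi - w\| \leq \|\xi - y_i\| + \|y_i - w\|$ with the elementary bound $(a+b)^p \leq (1+\eta)^p a^p + (1+1/\eta)^p b^p$ (for $p \geq 1$; the case $p \in (0,1)$ is handled by subadditivity) and the identity $\|\xi - y_i\|^p = s_i^p \|\xi - x_i\|^p$, I reduce the estimate to
\[
\Fp(\xi;\Gamma') \leq (1+\eta)^p \frac{1}{T}\sum_i \lambda_i \|\xi - x_i\|^p + (1+1/\eta)^p (C_{\norm{\cdot}}\alpha)^p \frac{1}{T}\sum_{i \in I_2}\frac{\lambda_i}{s_i}.
\]
Both $1/T \leq 1$ and $T^{-1}\sum_{i \in I_2}\lambda_i/s_i \leq 1$ follow from the definition of $T$; letting the approximation slack vanish then gives $\Fp(\xi;\Gamma') \leq (1+\eta)^p \Fp(\xi;\Gamma) + (1+1/\eta)^p \tilde C_{\norm{\cdot}}\alpha^p$, which is exactly the firewall inequality after rearrangement.

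The main obstacle is to tame the coefficient $1/s_i$, which is unbounded in general. For $p \geq 1$, it is absorbed by the factor $s_i^p$ coming from $\|\xi - y_i\|^p$, leaving the harmless factor $s_i^{p-1} \leq 1$; this is what makes the cost analysis in the $I_2$-term collapse cleanly onto $(1/T)\sum_i \lambda_i \|\xi - x_i\|^p$. A second, more geometric, subtlety is the local character of the decomposition $y_i = \sum_w \mu_{i,w} w$: I need the supporting lattice points $w$ to be within distance $O(\alpha)$ of $y_i$, which is slightly stronger than the bare distance hypothesis on $\Gae$, but follows from any standard simplicial subdivision of the slab $K \setminus K_\varepsilon$ along the lattice $\alpha \Z^d$.
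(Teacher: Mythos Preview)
Your argument is correct and follows the same strategy as the paper's proof: pull each $x_i \notin \mathring K$ back along the segment $[\xi,x_i]$ to a point of $K\setminus K_\varepsilon$, expand that point in the lattice $\Gae$, and control the resulting cost via the inequality $(a+b)^p \le (1+\eta)^p a^p + (1+1/\eta)^p b^p$ together with $\|\xi-y_i\|^p = s_i^p\|\xi-x_i\|^p$ and $s_i^{p-1}\le 1$ for $p\ge 1$. The only substantive difference is in the bookkeeping: the paper replaces the bad points \emph{iteratively}, one at a time (at most $d$ of them, since the optimal support has $|I|\le d+1$), which produces the extra factor $d$ in its constant $\tilde C_{\norm{\cdot}} = d\,C_{\norm{\cdot}}^p$; you instead replace them \emph{simultaneously} and absorb the blow-up through the single normalization $T = \sum_{I_1}\lambda_i + \sum_{I_2}\lambda_i/s_i \ge 1$ and the observation $T^{-1}\sum_{I_2}\lambda_i/s_i \le 1$. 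Your version is a little cleaner and yields a slightly better constant. Your closing caveat about needing a \emph{local} lattice expansion $y_i=\sum_w \mu_{i,w} w$ with $\|y_i-w\|\le C_{\norm{\cdot}}\alpha$ is exactly the point the paper addresses by an explicit product construction around the nearest lattice neighbour of $y_i$.
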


\begin{remark} The lattice $\Gamma_{\alpha,\varepsilon}$ and its size will be carefully defined and estimated  for the specified compact sets $K$ when calling upon the firewall lemma  in what follows.
\end{remark}

{\sc Proof.}
Let $\Gamma = \{ x_1, \ldots, x_n\}$ and let $\xi\!\in K_\varepsilon$.
Then we may choose $I=I(\xi)\subset \{1, \ldots, n\}$, $|I|\le d+1$ such that
\[
\Fp(\xi; \Gamma) = \sum_{i\in I} \lambda_j \norm{\xi - x_i}^p, \quad
\sum_{i\in I} \lambda_i x_i= \xi,\, \lambda_i \geq 0,\, \sum_{i\in I} \lambda_i
= 1.
\]
If for every $x_i \!\in \Gamma \setminus \mathring K$ $\lambda_i=0$ then $F^p(\xi,\Gamma)= F^p(\Gamma\cap \mathring K)$ and our claim is trivial.
Therefore, let $J(\xi)=\{i\,:\, x_i \!\in \Gamma \setminus \mathring K, \,
\lambda_i>0\}\subset I(\xi)$ and choose one fixed  $i_0 \in J(\xi)$.
 Let  $\theta= \theta(i_0)\! \in (0,1)$ such that 
 \[ \tilde x_{i_0} = \xi +
\theta (x_{i_0} - \xi) \in K \setminus K_\varepsilon \quad\mbox{and}\quad \frac{\theta^{p\wedge 1}}{\theta+\lambda_{i_0}(1-\theta)} \le 1+\eta
\]
(when $p\ge 1$ the   right constraint is
empty).
Setting
\[
	\tilde \lambda^0_i = \frac{\lambda_i \theta}{\theta + \lambda_{i_0}(1-\theta)},\,
	i \in I\setminus\{i_0\}, \quad \tilde \lambda^0_{i_0} =
	\frac{\lambda_{i_0}}{\theta + \lambda_{i_0}(1-\theta)}
\]
we arrive at
\[ 
\tilde \lambda^0_{i_0} \tilde x_{i_0} +\sum_{i\in I \setminus \{i_0\}} \tilde
\lambda^0_i x_i = \xi, \;\tilde \lambda^0_i \geq 0, \;\sum_{i\in I}
\tilde \lambda^0_i = 1.
\] 
Consequently 
\begin{equation*}
\begin{split}
\tilde \lambda^0_{i_0} \norm{\xi - \tilde x_{i_0}}^p + \sum_{j\in I \setminus
\{i_0\}} \tilde \lambda^0_i \norm{\xi - x_i}^p  &=
\frac{\lambda_{i_0}\theta^p}{\theta + \lambda_{i_0}(1-\theta)} \norm{\xi -
x_{i_0}}^p
+ \sum_{i\in I \setminus
		\{i_0\}}\frac{\lambda_i \theta}{\theta + \lambda_{i_0}(1-\theta)}\norm{\xi - x_i}^p \\
& \leq \frac{\theta^{p\wedge 1}}{\theta + \lambda_{i_0}(1-\theta)} \sum_{i\in I}\lambda_i
		\norm{\xi - x_i}^p\\
&		\le(1+\eta)\sum_{i\in I}\lambda_i
		\norm{\xi - x_i}^p.
\end{split}
\end{equation*}
Repeating the
procedure 
for every $i\!\in J(\xi)$ 
finally yields by induction the existence of $\tilde{x}_i\!\in K\setminus
K_{\varepsilon}$ and  $\tilde \lambda_i$, $i\in I$ such that
 \[ \sum_{i\in I :
x_i\notin \mathring{K}} \tilde \lambda_{i} \tilde x_{i} +\sum_{i\in I : x_i\in
\mathring{K}} \tilde \lambda_i x_i = \xi, \;\tilde \lambda_i \geq 0,
\;\sum_{i\in I} \tilde  \lambda_i = 1 
\] 
and 
\begin{equation}\label{eq:FirewallProofIneq}
(1+\eta)^{|J(\xi)|} \Fp(\xi; \Gamma)\ge \sum_{i\in I :
x_i\notin \mathring{K}} \tilde \lambda_{i} \norm{\xi - \tilde x_{i}}^p
+\sum_{i\in I : x_i\in \mathring{K}} \tilde \lambda_i \norm{\xi -  x_{i}}^p. 
\end{equation}

Let us denote $\Gae = \{a_1, \ldots, a_m\}$ and let   $i_0\!\in J(\xi)$ so that $\tilde x_{i_0}$ is a
``modified" $x_{i_0}$ (originally lying in $\Gamma\setminus \mathring K$). By
construction $\tilde x_{i_0}\in K \setminus K_\varepsilon\subset\conv(\Gae)$ and there is
$J_{i_0}\subset \{1, \ldots, m\}$ such that 
\[ \Fp(\tilde x_{i_0}, \Gae) =
\sum_{j\in J_{i_0}} \mu^{i_0}_j \norm{\tilde x_{i_0} - a_j}^p, \; \sum_{j\in
J_{i_0}} \mu^{i_0}_j x_j = \tilde x_{i_0},\, \mu^{i_0}_j \geq 0,\, \sum_{j\in
J_{i_0}} \mu^{i_0}_j = 1 
\] 
and 
\[ \forall\, j\!\in J_{i_0},\quad \norm{\tilde
x_{i_0}-a_j}\le C_{\norm{\cdot}}\,\alpha. 
\] 


 \smallskip Using the elementary inequality
\[ 
\forall\,p>0, \; \forall \eta > 0, \;\forall\, u,v \geq 0,\quad (u+v)^p \leq (1+\eta)^p u^p +
\Bigl(1+\frac{1}{\eta} \Bigr)^p v^p, 
\] 
we derive that for every $j\!\in J_{i_0}$
\begin{equation*}
  \norm{\xi - a_j}^p  \leq \bigl( \norm{\xi - \tilde x_{i_0}} + \norm{\tilde
  x_{i_0} - a_j}  \bigr)^p  \leq (1+\eta)^p \norm{\xi - \tilde x_{i_0}}^p + \Bigl(1+\frac{1}{\eta}
  \Bigr)^p\, C^p_{\norm{\cdot}}\, \alpha^p.
\end{equation*}

As a consequence, 
\[ 
\sum_{j\in J_{i_0}} \mu^{i_0}_j \norm{\xi - a_j}^p  \leq
(1+\eta)^p \norm{\xi - \tilde x_{i_0}}^p + \Bigl(1+\frac{1}{\eta}
  \Bigr)^p\, C^p_{\norm{\cdot}}\, \alpha^p
\] 
which in turn implies
 \[
\norm{\xi - \tilde x_{i_0}}^p\ge \frac{1}{(1+\eta)^p}\sum_{j\in J_{i_0}} \mu^{i_0}_j \norm{\xi - a_j}^p- \eta^{-p}\, C^p_{\norm{\cdot}}\, \alpha^p.
\] 
Plugging this inequality in~(\ref{eq:FirewallProofIneq}) yields and using that
$|J(\xi)|\le d+1$, we finally get
\begin{equation*}
\begin{split}
 (1+\eta)^{|J(\xi)|} \Fp(\xi; \Gamma) 
  \ge &  \sum_{i\in I : x_i\in \mathring{K}} \tilde \lambda_i  \norm{\xi -  x_{i}}^p  +  \frac{1}{(1+\eta)^p}\sum_{i\in I : x_i\notin \mathring{K}}\tilde \lambda_i\sum_{j\in J_{i}}\mu^i_j\norm{\xi-a_j}^p\\
		&- |J(\xi)|\eta^{-p} d\, C^p_{\norm{\cdot}}\, \alpha^p\\
   \geq  &\; \frac{1}{(1+\eta)^p} \Fp\bigl(\xi; (\Gamma \cap \mathring K\})
  \cup \Gae \bigr) -\eta^{-p}\,(d+1)\,   C^p_{\norm{\cdot}}\, \alpha^p.\hfill \Box
\end{split}
\end{equation*}

Now we can establish
the sharp rate for the uniform distribution $U([0,1]^d)$.

\begin{prop}[Uniform distribution]\label{prop:rateU} For every $p\ge 1$,
\[
\Qpn:= \inf_{n\geq 0} n^{1/d}\,
d_{n,p}\bigl(\Unifd \bigr) = \limn n^{1/d}\, d_{n,p}\bigl(\Unifd \bigr).
\]
\end{prop}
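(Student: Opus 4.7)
The plan is to establish the standard Fekete-type identity ``$\liminf = \inf$ and $\limsup \leq \inf$'' for the sequence $a_n := n^{1/d}\,d_{n,p}(U([0,1]^d))$, by combining the sublinearity of $d_{n,p}^p$ (Proposition~\ref{prop:subLin}), the scaling property (Proposition~\ref{prop:scaleProp}), and the monotonicity $d_{n+1,p}\leq d_{n,p}$ (which follows directly from Proposition~\ref{prop:PtsInsertion}, since adding points to a grid can only decrease $F_p$).

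First I would derive a geometric subadditivity bound. Fix integers $k\geq 1$ and $m\geq d+1$, and partition $[0,1]^d$ into $k^d$ closed hypercubes $C_1,\ldots,C_{k^d}$ of edge length $1/k$. Writing
\[
\Unifd = \sum_{i=1}^{k^d} k^{-d}\,\U(C_i),
\]
Proposition~\ref{prop:subLin} (with $s_i=k^{-d}$, $n_i=m$, total budget $n=k^d m$) combined with Proposition~\ref{prop:scaleProp} (which gives $d_{m,p}(\U(C_i))=k^{-1}d_{m,p}(\Unifd)$) yields
\[
d_{k^d m, p}^p(\Unifd) \leq \sum_{i=1}^{k^d} k^{-d}\,k^{-p}\,d_{m,p}^p(\Unifd) = k^{-p}\,d_{m,p}^p(\Unifd).
\]
Taking the $p$-th root and multiplying by $(k^d m)^{1/d}=k\,m^{1/d}$ gives $a_{k^d m}\leq a_m$.

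Second I would bridge the gaps between the subsequence indices $k^d m$. Given any $n\geq m$, choose $k$ with $k^d m \leq n < (k+1)^d m$. Monotonicity yields $d_{n,p}(\Unifd)\leq d_{k^d m,p}(\Unifd)$, and since $n^{1/d}\leq (k+1)\,m^{1/d}$,
\[
a_n \;\leq\; (k+1)\,m^{1/d}\,d_{k^d m, p}(\Unifd)\;\leq\; \frac{k+1}{k}\,a_m.
\]
As $n\to\infty$ with $m$ fixed, $k\to\infty$ and $\tfrac{k+1}{k}\to 1$, so $\limsn a_n \leq a_m$ for every admissible $m$. Taking the infimum over $m$ gives $\limsn a_n \leq \inf_m a_m$, while the reverse $\inf_m a_m \leq \limin a_n$ is automatic, so the limit exists and equals the infimum.

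The only mild subtlety is making sure $a_m$ is finite for the starting index $m$ used in the subadditivity step: this requires a grid $\Gamma\subset[0,1]^d$ of size $m$ with $\conv(\Gamma)\supset[0,1]^d$, which is achieved e.g.\ by taking the $2^d$ vertices of $[0,1]^d$ for any $m\geq 2^d$ (and the one-dimensional explicit formula in Theorem~\ref{thm:DQRate}$(c)$, or the $d$-dimensional Pierce bound of Proposition~\ref{PdtQErrop}, also guarantees finiteness). No genuine obstacle is expected here; the argument is a direct translation of the classical Fekete/quantization-theoretic scheme to the dual setting, with Proposition~\ref{prop:subLin} playing the role of regular quantization's analogous additivity.
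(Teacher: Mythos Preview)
Your proof is correct and follows essentially the same route as the paper's own argument: partition $[0,1]^d$ into $k^d$ subcubes, apply sublinearity (Proposition~\ref{prop:subLin}) and scaling (Proposition~\ref{prop:scaleProp}) to get $d_{n,p}\le k^{-1}d_{m,p}$ for $n\ge k^d m$, then use $n^{1/d}\le (k+1)m^{1/d}$ and let $k\to\infty$. The only cosmetic difference is that you first prove $a_{k^d m}\le a_m$ and then invoke monotonicity to bridge to general $n$, whereas the paper applies Proposition~\ref{prop:subLin} directly with total budget $n$ (which already allows $\sum n_i\le n$); the two are equivalent.
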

{\sc Proof.}
Let $n,m \in \N,\, m<n$ and set $k = k(n,m) = \left\lfloor
\bigl(\frac{n}{m}\bigr)^{1/d} \right\rfloor \ge 1$.

Covering the unit hypercube $[0,1]^d$ by $k^d$ translates $C_1, \ldots,
C_{k^d}$ of the hypercube $\bigl[0,\frac{1}{k}\bigr]^d$,
we arrive at $\Unifd = k^{-d} \sum_{i=1}^{k^d} \U(C_i)$. Hence, Proposition~\ref{prop:subLin} yields
\[
d_{n,p}^p\bigl(\Unifd \bigr) \leq k^{-d}\sum_{i=1}^{k^d} \dqp_m(\U(C_i)).
\]
Furthermore, Proposition~\ref{prop:scaleProp} implies
\[
d_{m,p}(\U(C_i)) = k^{-1}\, d_{m,p}\bigl(\Unifd \bigr),
\]
so that we may conclude for all $n,m\in\N,\, m<n$,
\[
	d_{n,p}\bigl(\Unifd \bigr) \leq k^{-1}\, d_{m,p}\bigl(\Unifd \bigr).
\]
Thus, we get 
\begin{eqnarray*}
\begin{split}
  n^{1/d}\,d_{n,p}\bigl(\Unifd \bigr) & \leq k^{-1}\, n^{1/d}\,d_{m,p}\bigl(\Unifd
  \bigr)\\
  	& \leq  \frac{k+1}{k} \, m^{1/d}\, d_{m,p}\bigl(\Unifd
  \bigr), 
\end{split}
\end{eqnarray*}
which yields for every fixed integer $m \geq 1$
\[
	\limsn n^{1/d}\, d_{n,p}\bigl(\Unifd \bigr) \leq m^{1/d}\, d_{m,p}\bigl(\Unifd
	\bigr),
\]
since $\limn k(n,m) = +\infty$. This finally implies
\[
\limn n^{1/d}\,d_{n,p}\bigl(\Unifd \bigr)  = \inf_{m\geq 0} m^{1/d}\,
d_{m,p}\bigl(\Unifd \bigr).\hskip 3 cm  \Box
\]

\begin{prop}\label{prop:rateUNN} For every $p\ge 1$, 
\[ 
\Qpn = \limn n^{1/d}\, d_{n,p}\bigl(\Unifd \bigr)  = \limn n^{1/d}\,
\bar d_{n,p}\bigl(\Unifd \bigr) 
\]
\end{prop}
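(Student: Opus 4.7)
The inequality $\limsn n^{1/d}\bar d_{n,p}(\Unifd) \le \Qpn$ is immediate: for any grid $\Gamma$ with $\conv(\Gamma)\supset[0,1]^d$ one has $\bar F_p = F_p$ on $[0,1]^d$, hence $\bar d_{n,p}(\Unifd) \le d_{n,p}(\Unifd)$, and Proposition~\ref{prop:rateU} gives the claim. The content lies in the reverse inequality $\Qpn \le \liminf_n n^{1/d}\bar d_{n,p}(\Unifd)$, and the plan is to promote a near-optimal $\bar d_{n,p}$-grid into an admissible $d_{n,p}$-grid by stapling on a small lattice firewall, using Lemma~\ref{lem:firewall}.

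Concretely, pick $\Gamman^{\ast}$ with $|\Gamman^{\ast}|\le n$ and $\bar d^p(\Unifd;\Gamman^{\ast}) \le \bar d_{n,p}^p(\Unifd) + 1/n$. Proposition~\ref{PdtQErrop} yields $\bar d_{n,p}(\Unifd)\to 0$, hence $\bar d^p(\Unifd;\Gamman^{\ast})\to 0$. For $\varepsilon>0$ small, set $K_\varepsilon := \{x\in[0,1]^d: \dist(x,([0,1]^d)^c)\ge\varepsilon\}$; this is a compact cube with nonempty interior contained in $(0,1)^d = \opSuppP$ (for $\Probb=\Unifd$), so Lemma~\ref{lem:radius} gives $K_\varepsilon\subset \conv(\Gamman^{\ast})$ for all $n$ large enough, and therefore $F_p(\xi;\Gamman^{\ast}) = \bar F_p(\xi;\Gamman^{\ast})$ on $K_\varepsilon$. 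Given $\alpha>0$, let $\Gae\subset\alpha\Z^d$ be the lattice firewall of the remark following Lemma~\ref{lem:firewall} (so $[0,1]^d\setminus K_\varepsilon \subset \conv(\Gae)$ and $|\Gae| \le C_d\,\varepsilon/\alpha^d$), and form $\tilde\Gamman := (\Gamman^{\ast}\cap(0,1)^d)\cup\Gae$, which satisfies $|\tilde\Gamman|\le n+|\Gae|$ and $\conv(\tilde\Gamman)\supset[0,1]^d$.

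For $\eta\in(0,1)$, rearranging Lemma~\ref{lem:firewall} yields on $K_\varepsilon$
\[
F^p(\xi;\tilde\Gamman) \le (1+\eta)^p\, \bar F_p^p(\xi;\Gamman^{\ast}) + C_\eta\,\alpha^p,
\]
while on $[0,1]^d\setminus K_\varepsilon$ the explicit $\alpha$-cube structure of $\Gae$ and Proposition~\ref{prop:productConstruction} give $F^p(\xi;\tilde\Gamman)\le F^p(\xi;\Gae)\le C\alpha^p$. Using $\lambda^d([0,1]^d\setminus K_\varepsilon)\le 2d\varepsilon$ and integrating,
\[
d^p(\Unifd;\tilde\Gamman) \le (1+\eta)^p\, \bar d^p(\Unifd;\Gamman^{\ast}) + C'_\eta\,\alpha^p.
\]
Now choose $\alpha_n,\varepsilon_n\to 0$ with $n^{1/d}\alpha_n\to 0$ and $|\Gae|/n \le C_d\,\varepsilon_n/(n\alpha_n^d)\to 0$ (e.g. $\alpha_n=n^{-1/d-\beta}$, $\varepsilon_n=n^{-\gamma}$ with $0<d\beta<\gamma$). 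Writing $m_n:=n+|\Gae|\sim n$, one gets
\[
m_n^{p/d}\, d_{m_n,p}^p(\Unifd) \le (m_n/n)^{p/d}\bigl[(1+\eta)^p\, n^{p/d}\,\bar d_{n,p}^p(\Unifd) + o(1)\bigr].
\]
Proposition~\ref{prop:rateU} says the left-hand side tends to $\Qpn^p$; taking $\liminf$ on the right yields $\Qpn \le (1+\eta)\liminf_n n^{1/d}\bar d_{n,p}(\Unifd)$, and $\eta\to 0$ closes the argument.

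The principal obstacle is the coupled scaling of $(\alpha_n,\varepsilon_n)$: we need $\alpha_n\ll n^{-1/d}$ to kill the firewall error $\alpha_n^p\,n^{p/d}$, yet $\Gae$ carries $\asymp \varepsilon_n/\alpha_n^d$ additional points that must remain $o(n)$. These constraints are jointly satisfiable only because $\varepsilon_n$ may tend to $0$ arbitrarily slowly, providing enough slack between the two thresholds. A secondary but essential preliminary is the use of Lemma~\ref{lem:radius}: without it, $F_p(\cdot;\Gamman^{\ast})$ could be $+\infty$ on part of $K_\varepsilon$ and the firewall comparison would carry no information.
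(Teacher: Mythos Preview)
Your argument is correct in spirit but considerably heavier than what the paper does, and it carries one technical loose end. The paper never invokes the firewall lemma here: it simply restricts to an inner cube. For fixed $\varepsilon\in(0,1/2)$, set $C_\varepsilon=[\varepsilon/2,1-\varepsilon/2]^d$; Lemma~\ref{lem:radius} gives $C_\varepsilon\subset\conv(\Gamman)$ for $n\ge n_\varepsilon$, so on $C_\varepsilon$ one has $\bar F_p=F_p$ and hence
\[
(1+\varepsilon)\,\bar d^p_{n,p}(\Unifd)\ \ge\ \int_{C_\varepsilon}F^p(\xi;\Gamman)\,d\xi\ =\ (1-\varepsilon)^d\,d^p\bigl(\U(C_\varepsilon);\Gamman\bigr)\ \ge\ (1-\varepsilon)^{d+p}\,d^p_{n,p}(\Unifd),
\]
the last step by Proposition~\ref{prop:scaleProp}. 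Passing to $\liminf$ and letting $\varepsilon\to0$ finishes in three lines. What your firewall construction buys is a comparison between $\bar d$ and $d$ \emph{without shrinking the domain}, which is genuinely needed later (Proposition~\ref{prop:rateCubewise}(b)) when several disjoint cubes must be handled simultaneously and scaling no longer collapses the problem; for a single cube the restriction-plus-scaling trick is enough and avoids the delicate $\alpha_n$--$\varepsilon_n$ balancing entirely.

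The loose end in your write-up is the passage from fixed $\varepsilon$ to $\varepsilon_n\to0$. Lemma~\ref{lem:radius} delivers an $n_\varepsilon$ with no quantitative control, so there is no reason $K_{\varepsilon_n}\subset\conv(\Gamman^\ast)$ should hold for the prescribed polynomial rate $\varepsilon_n=n^{-\gamma}$. The repair is a diagonal extraction: for each $k$ pick $n_k$ so that $K_{1/k}\subset\conv(\Gamman^\ast)$ for $n\ge n_k$, set $\varepsilon_n=1/k$ on $[n_k,n_{k+1})$, and then take, say, $\alpha_n=\varepsilon_n^{1/(2d)}n^{-1/d}$, which satisfies both $n^{1/d}\alpha_n\to0$ and $\varepsilon_n/(n\alpha_n^d)=\varepsilon_n^{1/2}\to0$. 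With this adjustment your route goes through, but the paper's shortcut makes all of it unnecessary for the uniform case.
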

\begin{proof}
Since for every compactly supported distribution $\Probb$ we have $\bar d_{n,p}(\Probb) \leq  d_{n,p}(\Probb)$
it remains to show
\[
	\Qpn \leq \limin n^{1/d}\, \bar d_{n,p}\bigl(\Unifd \bigr).
\]


For $0 < \varepsilon < 1/2$ let $C_\varepsilon= (1/2,\ldots,1/2)+
\frac{1-\varepsilon}{2}[-1,1]^d$ be the centered hypercube in $[0,1]^d$ with
edge-length $1-\varepsilon$ and midpoint $(1/2,\ldots,1/2)$.
Moreover let $(\Gamman)$ be a sequence of quantizers such that, for every $n\ge 1$, 
\[
\bar d_{p}(\Unifd; \Gamman) \leq (1+\varepsilon) \bar d_{n,p}(\Unifd).
\]

Owing to Lemma~\ref{lem:radius}, as $C_{\varepsilon} \subset (1,1)^d$, there is an integer $n_\varepsilon \in \N$ such
that \[ \forall n \geq n_\varepsilon, \quad C_\varepsilon \subset
\conv(\Gamman) . 
\]

We therefore get for any $n\geq n_\varepsilon$
 \begin{eqnarray*}
 (1+\varepsilon)^d\bar d^p_{n}\bigl(\Unifd \bigr)& \geq &\bar d^p\bigl(\Unifd;\Gamman \bigr)\\
 &\ge & \int _{C_{\varepsilon}}\bar F^p(\xi,\Gamma_n)^pd\xi=\int _{C_{\varepsilon}} F^p(\xi,\Gamma_n)^pd\xi= \lambda_d(C_{\varepsilon}) d^p \bigl(\U(C_\varepsilon), \Gamma_n\bigr) \\
&\ge& (1 - \varepsilon)^{d} d^p_{n}\bigl(\U(C_\varepsilon)\bigr)  = (1 - \varepsilon)^{d+p} d^p_{n}\bigl(\Unifd \bigr) 
\end{eqnarray*}
where we used the scaling property (Proposition~\ref{prop:scaleProp}) in the last line.

Hence, we obtain for all $0 < \varepsilon < 1/2$
\[
\limin n^{1/d}\, \bar d_{n,p}\bigl(\Unifd \bigr) \geq \frac{(1 -
\varepsilon)^{1+d/p}}{(1+\varepsilon)^{d/p}}\, \Qpn,
\]
so that letting $\varepsilon \to 0$ completes the proof. 
\end{proof}

\begin{prop}\label{prop:rateCubewise}
Let $\Probb = \sum_{i=1}^m s_i\, \U(C_i),\, \sum_{i=1}^m s_i = 1$,  $s_i>0$, $i=1,\ldots,m$,
where $C_i=a_i+[0,l]^d$,  $i=1,\ldots,m$, are pairwise disjoint hypercubes in $\R^d$ with common edge-length
$l$. Set 
\[
h := \frac{d\Probb}{d\lambda_d}= \sum_{i=1}^m s_i
l^{-d}\ind{C_i}.
\]
Then
\[
 \limn n^{1/d}\, d_{n,p}(\Probb)=  \limn n^{1/d}\, \bar d_{n,p}(\Probb)= \Qpn\cdot \normdp{h}^{\frac 1p} .
\]
\end{prop}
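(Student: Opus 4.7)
My plan is to prove (a) and (b) separately: (a) follows from sub-linearity (Proposition~\ref{prop:subLin}), while (b) requires the Firewall Lemma because sub-linearity fails for $\bar d_{n,p}$, as the remark after Proposition~\ref{prop:subLin} emphasizes.

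For (a), I allocate points among the cubes by the Lagrange-optimal rule $n_i = \lfloor n s_i^{d/(d+p)}/S\rfloor$ with $S:=\sum_j s_j^{d/(d+p)}$, so that $\sum_i n_i\leq n$ and $n/n_i\to S\,s_i^{-d/(d+p)}$. Combining Proposition~\ref{prop:subLin} and the scaling identity $d_{n_i,p}(\U(C_i))=l\,d_{n_i,p}(\Unifd)$ (Proposition~\ref{prop:scaleProp}) gives
\[
n^{p/d}d_{n,p}^p(\Probb)\leq l^p\sum_i s_i\,(n/n_i)^{p/d}\,n_i^{p/d}d_{n_i,p}^p(\Unifd).
\]
Since $n_i^{p/d}d_{n_i,p}^p(\Unifd)\to(\Qpn)^p$ by Proposition~\ref{prop:rateU} and since the block-constant form of $h$ immediately gives $\normdp{h}=l^pS^{(d+p)/d}$, passing to the limsup yields $\limsup_n n^{p/d}d_{n,p}^p(\Probb) \leq (\Qpn)^p\normdp{h}$, which is (a).

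For (b), let $(\Gamman)$ be near-optimal for $\bar d_{n,p}(\Probb)$. Fix $\varepsilon,\eta\in(0,1)$ and $c>0$, and apply Lemma~\ref{lem:firewall} separately on each cube $C_i$ with $\alpha=c\,n^{-1/d}$: pick $\Gae^i\subset\alpha\Z^d$ of cardinality $O(\varepsilon c^{-d}n)$ so that its convex hull contains both $C_i\setminus(C_i)_\varepsilon$ and $C_i$ itself. Set $\tilde\Gamma_n^i=(\Gamman\cap\mathring C_i)\cup\Gae^i$ and $n_i=|\tilde\Gamma_n^i|$; disjointness of the cubes yields $\sum_i n_i\leq N:=n(1+O(\varepsilon c^{-d}))$. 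Lemma~\ref{lem:radius} gives $(C_i)_\varepsilon\subset\conv(\Gamman)$ for $n$ large, so $\bar F^p(\cdot;\Gamman)=F^p(\cdot;\Gamman)$ on each $(C_i)_\varepsilon$. Integrating the Firewall inequality over $(C_i)_\varepsilon$ against $\U(C_i)$, using scaling together with $n_i^{1/d}d_{n_i,p}(\Unifd)\geq\Qpn$, summing with weights $s_i$, and invoking the Lagrange bound $\sum_i s_i n_i^{-p/d}\geq N^{-p/d}S^{(d+p)/d}$ produces
\[
n^{p/d}\bar d_{n,p}^p(\Probb)\geq\frac{(1-2\varepsilon/l)^d(l-2\varepsilon)^p}{(1+\eta)^p}(\Qpn)^p(n/N)^{p/d}S^{(d+p)/d}-\Bigl(1+\tfrac{1}{\eta}\Bigr)^p\tilde C\,c^p.
\]

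The main obstacle is the tension between Firewall error and firewall cardinality: forcing $\alpha=o(n^{-1/d})$ would kill the error but inflate the firewall to $\omega(n)$ points. The borderline scaling $\alpha=cn^{-1/d}$ keeps both $O(1)$ in $n$; letting then $n\to\infty$, $\varepsilon\to 0$, $c\to 0$, $\eta\to 0$ in this order sends every parasitic factor $(n/N)^{p/d}$, $(1-2\varepsilon/l)^d$, $(l-2\varepsilon)^p/l^p$, $(1+\eta)^{-p}$ to $1$ and $(1+1/\eta)^p\tilde C\,c^p$ to $0$, yielding $\liminf_n n^{p/d}\bar d_{n,p}^p(\Probb)\geq(\Qpn)^p l^p S^{(d+p)/d}=(\Qpn)^p\normdp{h}$ and hence (b) after taking $p$-th roots.
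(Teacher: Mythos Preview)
Your proof is correct and follows essentially the same strategy as the paper: part~(a) via sub-linearity, scaling, and Proposition~\ref{prop:rateU}; part~(b) via the Firewall Lemma applied cubewise with a lattice of spacing $\alpha\sim n^{-1/d}$, followed by the Lagrange/H\"older bound $\sum_i s_i y_i^{-p/d}\ge N^{-p/d}S^{(d+p)/d}$ under $\sum_i y_i\le N$.

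There is one noteworthy streamlining in your argument for~(b). The paper passes to a subsequence along which $n_i/n\to v_i$, then invokes part~(a) to rule out $v_i=0$ before applying the Lagrange bound to the limits $\tilde v_i$. You bypass this entirely by using the \emph{infimum} characterization $n_i^{p/d}d_{n_i,p}^p(\Unifd)\ge(\Qpn)^p$ from Proposition~\ref{prop:rateU}, which lets you apply the Lagrange bound directly to the integers $n_i$ (automatically positive since each firewall grid is nonempty). This avoids both the subsequence extraction and the contradiction argument, and makes the order of limits $n\to\infty$, $\varepsilon\to0$, $c\to0$, $\eta\to0$ completely transparent.
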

\begin{proof}Since $d_{n,p}(\Probb)\ge \bar d_{n,p}(\Probb)$ it suffices to show that 
\[
 \limsn n^{1/d}\, d_{n,p}(\Probb) \leq \Qpn\cdot \normdp{h}^{\frac 1p} \quad 
 \mbox{ and } \quad \limin n^{1/d}\, \bar d_{n,p}(\Probb) \geq \Qpn\cdot \normdp{h}^{\frac 1p} .
\]
For $n\in\N$, set
\[
t_i = \frac{s_i^{d/(d+p)}}{\sum_{j=1}^m s_j^{d/(d+p)}} \qquad\text{ and }\qquad
n_i = \lfloor t_i n\rfloor, \,1\leq i \leq m.
\]

Then, by Proposition~\ref{prop:subLin} and Proposition~\ref{prop:scaleProp}, we
get for every $n\geq \max_{1\leq i \leq m}(1/t_i)$
\[
d_{n}^p(\Probb) \leq \sum_{i=1}^m s_i\, d_{n}^p(\U(C_i)) = l^p \sum_{i=1}^m s_i\,
d^p_{n_i}(\Unifd).
\]

Proposition~\ref{prop:rateU} then yields
\[
n^{\frac pd}\, \dqp_{n_i}(\Unifd) = \biggl(\frac{n}{n_i}\biggr)^{\frac pd}\,
n_i^{\frac pd}\,\dqp_{n_i}(\Unifd) \longrightarrow  t_i^{-\frac pd} \Qpn \quad\text{ as } n\to+\infty.
\]
Noting that  $\normdp{h} = l^p \Bigl( \sum s_i^{d/(d+p)} 
\Bigr)^{(d+p)/d}$,
we get
\[
\limsn n^{\frac pd}\, \dqpn(\Probb) \leq \Qpn  l^p \sum_{i=1}^m s_i\, t_i^{-\frac pd} = \Qpn\cdot
\normdp{h}.
\]

\noindent $(b)$ Let $\varepsilon \in(0,l/2)$ and let $\Cie$ denote  the closed hypercube with
the same center as $C_i$ but with edge-length $l-\varepsilon$.
For $\alpha \in (0, \varepsilon/2)$, we set $\tilde \alpha = \frac{l}{\lceil l/\alpha\rceil }$ and we define the lattice 
$$
 \Gamma_{\alpha, \varepsilon,i} = \big(a_i+ \tilde \alpha\Z^d\big)\cap (C_i\setminus C_{i,\varepsilon}\big) \bigcup \{\mbox{vertices of }  C_i\}.
$$

It is clear that $\conv(\Gamma_{\alpha,\varepsilon,i}) = C_i \subset C_i\setminus C_{i,\varepsilon}$ since it contains the vertices of $C_i$.
Moreover, for every $\xi\!\in C_i\setminus C_{i,\varepsilon}$, ${\rm dist}_{\ell^{\infty}}(\xi,\Gamma_{\alpha,\varepsilon,i})\le \alpha$ so that there exists a real constant $C_{\norm{\cdot}}>0$ only depending on the norm $\|.\|$ such that ${\rm dist}_{\|.\|}(\xi,\Gamma_{\alpha,\varepsilon,i})\le C_{\norm{\cdot}} \alpha$. Consequently the lattice $\Gamma_{\alpha,\varepsilon,i}$ satisfies the assumption of the firewall lemma (Lemma~\ref{lem:firewall}).
 
 On the other hand, easy combinatorial arguments show that number of points $m_i$ of $\Gamma_{\alpha,\varepsilon,i}$ falling in $C_i$ satisfies $\lceil \frac{l}{\tilde \alpha}\rceil^d \le m_i \le \big(\lceil \frac{l}{\tilde \alpha}\rceil+1\big)^d +2^d$ whereas the number $m_{i,\varepsilon}$ of points falling in $C_{i,\varepsilon}$ satisfies  $\big(\lceil \frac{l-\varepsilon}{\tilde \alpha}\rceil-1\big)^d \le m_{i,\varepsilon} \le \big(\lceil \frac{l-\varepsilon}{\tilde \alpha}\rceil+1\big)^d$ so that
 \[
\Big\lceil\frac{l}{\tilde \alpha}\Big\rceil^d - \Big(\Big\lceil\frac{l-\varepsilon}{\tilde \alpha}\Big\rceil+1\Big)^d \le  |\Gamma_{\alpha,\varepsilon,i}| \le \Big(\Big\lceil\frac{l}{\tilde \alpha}\Big\rceil+1\Big)^d +2^d-\Big(\Big\lceil\frac{l-\varepsilon}{\tilde \alpha}\Big\rceil-1\Big)^d.
 \]
%
%
We define for every $\varepsilon\in(0,l/2), \alpha\in(0,\varepsilon/2)$
\[
	g_{l,\varepsilon}(\alpha) = \alpha^d \abs{\Gamma_{\alpha,\varepsilon,i}}.
\]
Since $\frac{\alpha}{\tilde{\alpha}} \to 1$ and $2\alpha \Bigl\lceil
	\frac{\varepsilon/2}{\tilde{\alpha}} \Bigr\rceil \to \varepsilon$ as $\alpha
	\to 0$, we conclude from the above inequalities that
	\begin{equation}\label{eq:proofFirewallDefgl}
	\forall \varepsilon \in (0, l/2),\quad	\lim_{\alpha \to 0} g_{l,\varepsilon}(\alpha) = l^d - (l-\varepsilon)^d.	 
	\end{equation}	
Let $\eta\in(0,1)$ and denote by $\Gamman$ a sequence of
$n$-quantizers such that $\bar d^p(\Probb; \Gamman) \leq (1+\eta)
d^p_n(\Probb)$. It follows from Proposition~\ref{PdtQErrop} that
$\bar d^p(\Probb; \Gamman) \to 0$ for $n\to \infty$ so that 
Lemma~\ref{lem:radius} yields the existence of $n_\varepsilon\in\N$ such that for any $n\geq n_\varepsilon$
\[
\bigcup_{1\leq i \leq m} \Cie \subset \conv(\Gamman).
\]

We then derive from Lemma~\ref{lem:firewall} (firewall) 
\begin{equation*}
\begin{split}
  \dqbp(\U(C_i); \Gamman) & = l^{-d} \int_{C_i} \Fbp(\xi; \Gamman)\,
  \lambda_d(d\xi) \\
  & \geq l^{-d} \int_{\Cie} \Fbp(\xi; \Gamman)\,
  \lambda_d(d\xi) = l^{-d} \int_{\Cie} \Fp(\xi; \Gamman)\,
  \lambda_d(d\xi) \\
  & \geq \frac{l^{-d }\,(l-\varepsilon)^d}{(1+\eta)^{p+d+1}}\, \dqp\bigl(\U(\Cie);
  (\Gamma_n\cap\mathring C_i)\cup \Gamma_{\alpha,\varepsilon,i}\bigr) -
  l^{-d}\,(l-\varepsilon)^d \frac{(1+\eta)^{-d-1}}{\eta^{p}}\, (d+1) C_{\norm{\cdot}}\cdot\alpha^p.
\end{split}
\end{equation*}

At this stage, we set for every $\rho > 0$
\begin{equation}\label{eq:proofFirewallDefalpha}
\alpha_n=\alpha_n(\rho) = \Bigl( \frac{m}{\rho n} \Bigr)^{1/d}
\end{equation}
and denote
\[
	n_i = \abs{(\Gamman\cap\mathring C_i)\cup \gaei}.
\]
Proposition~\ref{prop:scaleProp} yields $d_{n_i,p}(\U(\Cie)) =
(l-\varepsilon) d_{n_i,p}(\Unifd) $, so that we get 
\begin{equation}\label{eq:proofRateCubewiseb}
  \begin{split}
      n^{\frac pd}d_{n}^p(\Probb) & \geq \frac{1}{1+\eta} \sum_{i=1}^m s_i\,
      n^{\frac pd}\,\dqbp(\U(C_i);\Gamman)\\
      & \geq \frac{l^{-d }\,(l-\varepsilon)^d}{(1+\eta)^{p+d+2}}\, \sum_{i=1}^m s_i\, n^{\frac pd}\, \dqp\bigl(\U(\Cie);
  (\Gamma_n\cap\mathring C_i)\cup \gaei\bigr) \\
  & \qquad -  l^{-d}\,(l-\varepsilon)^d
  \frac{(1+\eta)^{-d-2}}{\eta^p} \sum_{i=1}^m
  s_i\, (d+1)\, C_{\norm{\cdot}}\cdot\alpha^p \cdot
  n^{\frac pd} \\
  &\geq \frac{l^{-d }\,(l-\varepsilon)^{d+p}}{(1+\eta)^{p+d+2}}\, \sum_{i=1}^m s_i\,
      n^{\frac pd}\, \dqpnn{n_i}\bigl(\Unifd\bigr) 
  -  l^{-d}\,(l-\varepsilon)^d
  \frac{(1+\eta)^{-d-2}}{\eta^p}  (d+1)\,
       C_{\norm{\cdot}} 
  \Bigl(\frac{m}{\rho}\Bigr)^{\frac pd}.
  \end{split}
\end{equation}

Since
\[
\frac{n_i}{n} \leq \frac{\abs{\Gamman\cap\mathring C_i}}{n} +
\frac{g_{l,\varepsilon}(\alpha_n)}{n\alpha_n^d} = \frac{\abs{\Gamman\cap\mathring
C_i}}{n} + \frac{\rho}{m} g_{l,\varepsilon}(\alpha_n),
\]
we conclude from (\ref{eq:proofFirewallDefgl}) and (\ref{eq:proofFirewallDefalpha}) that
\[
	\limsn\sum_{i=1}^m \frac{n_i}{n} \leq 1 + \rho \bigl(l^d -
	(l-\varepsilon)^d\bigr).
\]
We may choose a subsequence (still denoted by $(n)$), such that
\[
n^{1/d}\, \bar
d_{n,p}(\Probb) \to \limin n^{1/d}\, d_{n,p}(\Probb)
 \qquad\text{ and }\qquad 
\frac{n_i}{n} \to v_i \in [0,1 + \rho (l^d -
	(l-\varepsilon)^d)].
\]

As a matter of fact, $v_i>0,$ for every $i=1,\ldots  m$: otherwise
Proposition~\ref{prop:rateU} would yield
\begin{equation*}
\begin{split}
n^{\frac pd}\, \dqbpn(\Probb)  \geq& \frac{l^{-d}\,(l-\varepsilon)^{d+p}}{(1+\eta)^{p+d+2}}
\sum_{i=1}^m s_i\, \Bigl(\frac{n_i}{n}\Bigr)^{-\frac pd} n_i^{\frac pd}\,
\dqpnn{n_i}\bigl(\Unifd\bigr)\\
& \;-\,  l^{-d}\,(l-\varepsilon)^d
  \frac{(1+\eta)^{p-d-2}}{\eta^p} (d+1) C_{\norm{\cdot}}\cdot
  \Bigl(\frac{m}{\rho}\Bigr)^{\frac pd}\\
  & \to +\infty \quad \mbox{ as } n\to +\infty
  \end{split}
\end{equation*}
which contradicts $(a)$. Consequently, we may normalize the $v_i$'s by setting
\[
	\widetilde v_i = \frac{v_i}{1+\rho (l^d - (l-\varepsilon)^d)}, \; i=1,\ldots,m,
\]
so that $\sum_{i=1}^m \widetilde v_i \leq 1$. We derive from 
Proposition~\ref{prop:rateU} that
\begin{equation*}
\begin{split}
  \limin \sum_{i=1}^m s_i\, n^{\frac pd}\,
  \dqpnn{n_i}\bigl(\Unifd\bigr) 
  &\ge\sum_{i=1}^m s_i\,v_i^{-\frac pd}   n_i^{\frac pd}\,
  \dqpnn{n_i}\bigl(\Unifd\bigr) \\ 
  & = \Qpn (1+\rho (l^d - (l-\varepsilon)^d)^{-\frac pd}\sum_{i=1}^m s_i\, \widetilde v_i^{\,-\frac pd}\\
 &\geq \Qpn(1+\rho (l^d - (l-\varepsilon)^d)^{-\frac pd}\inf_{\sum_i y_i\le 1, y_i\ge 0} \sum_{i=1}^m s_i y_i^{-\frac pd}
\\
 &= \Qpn(1+\rho (l^d - (l-\varepsilon)^d)^{-\frac pd} \biggl(\sum_{i=1}^m s_i^{d/(d+p)} \biggr)^{(d+p)/d}.
  \end{split}
\end{equation*}

Hence, we derive from 
(\ref{eq:proofRateCubewiseb})
\begin{equation*}
\begin{split}
  \limin n^{\frac pd}\, \dqbpn(\Probb) & 
  \geq \frac{l^{-d }\,(l-\varepsilon)^{d+p}}{(1+\eta)^{p+d+2}\bigl(1+\rho (l^d -
  (l-\varepsilon)^d)\bigr)^{\frac pd}}\,\Qpn\,\biggl(\sum_{i=1}^m s_i^{d/(d+p)}
  \biggr)^{(d+p)/d}
  \\ 
 & \qquad -  l^{-d}\,(l-\varepsilon)^d
  \frac{(1+\eta)^{-d-2}}{\eta^p} (d+1)\,        C_{\norm{\cdot}}\cdot
  \Bigl(\frac{m}{\rho}\Bigr)^{\frac pd}. \\ 
\end{split}
\end{equation*}
Letting $\varepsilon\to 0$ implies
\begin{equation*}
\begin{split}
  \limin n^{\frac pd}\, \dqbpn(\Probb) & \geq
  \frac{l^{p}}{(1+\eta)^{p+d+2}}\, \Qpn \,\biggl(\sum_{i=1}^m s_i^{d/(d+p)}
  \biggr)^{(d+p)/d}
 \, - \,  \frac{(1+\eta)^{-d-2}}{\eta^p}(d+1)\,  
       C_{\norm{\cdot}} 
  \Bigl(\frac{m}{\rho}\Bigr)^{\frac pd}\\
  & =  \frac{1}{(1+\eta)^{p+d+2}}\,\Qpn \cdot\normdp{h} 
 \, - \, \frac{(1+\eta)^{-d-2}}{\eta^p}d\,
       C_{\norm{\cdot}} 
  \Bigl(\frac{m}{\rho}\Bigr)^{\frac pd}
\end{split}
\end{equation*}
and, finally, letting successively  $\rho$ go to $+\infty$ and $\eta$ go to $0$
completes the proof.
\end{proof}

\begin{prop}\label{prop:rateCompact}
Assume that $\Probb$
is  absolutely continuous w.r.t. $\lambda_d$ with  compact support. Then
\[
 \limn n^{\frac pd}\,d_{n,p}(\Probb) = \limin n^{\frac pd}\, \bar d_{n,p}(\Probb)= \Qpn\cdot \normdp{h}^{\frac 1p}
\]
\end{prop}
 {\sc Proof.} Since $d_{n,p}(\Probb)\ge \bar d_{n,p}(\Probb)$ it suffices to show that 
\[ \limsn n^{\frac pd}\,d_{n,p}(\Probb) \leq \Qpn\cdot \normdp{h}^{\frac 1p}\; \mbox{ and }\;\displaystyle   \limin n^{\frac pd}\, \bar d_{n,p}(\Probb) \geq \Qpn\cdot \normdp{h} ^{\frac1p}.
\]
\noindent {\em Preliminary step.} Let $C=[-l/2,l/2]^d$ be a  closed hyper hypercube centered at the origin, parallel to the coordinate axis  with
edge-length $l$, such that $\supp(\Probb) \subset C$.
For $k\in\N$ consider the tessellation of $C$ into $k^d$ closed hypercubes with
common edge-length $l/k$.
To be precise, for every $\underline i=(i_1,\ldots,i_d)\!\in \Z^d$, we set
\[
C_{\underline i} =\Prod_{r=1}^d\Big [-\frac l2+\frac{i_rl}{k},-\frac l2+\frac{(i_r+1)l}{k}\Big ].
\]
Then, set \begin{equation} 
h = \frac{d\Probb}{d\lambda_d}\;\mbox{ and }\; 
  \Probb_k  = \sum_{\substack{\underline i\in \Z^d\\ 0\leq i_r < k}}
  \Probb(C_{\underline i}) \, \U(C_{\underline i}),\;h_k =
  \frac{d\Probb_k}{d\lambda_d} = \sum_{\substack{\underline i\in \Z^d\\ 0\leq i_r < k}}
  \frac{\Probb(C_{\underline i})}{\lambda_d(C_{\underline i})}
  \ind{C_{\underline i}}, \; k\ge 1.
\end{equation}

By differentiation of measures we obtain $h_k \to h$, $\lambda_d$-$a.s.$ as $k\to\infty$.
Which in turn implies, owing to  Scheff\'e's Lemma,
\[
\lim_{k\to + \infty} \norm{h_k-h}_1 = 0.
\]
Furthermore, 
\[
\lim_{k\to +\infty} \normdp{h_k} = \normdp{h}
\] 
since $\normdp{h_k - h} \leq \Bigl( \lambda_d(C) \Bigr)^{\frac pd} \norm{h_k
- h}_1$ by  Jensen's Inequality applied to the probability measure
$\frac{\lambda_{d\,|C}}{\lambda_d(C)}$. Moreover, by
Proposition~\ref{prop:rateCubewise} we have
\begin{equation}\label{eq:limit_hk}
 \limn n^{1/d}\, d_{n,p}(\Probb_k) = \Qpn\, \normdp{h_k}^{\frac1p}.
\end{equation}

Likewise, we define an inner approximation of $\Probb$: 
denote by
\[
C^k = \bigcup_{C_{\underline i} \subset \opSuppP} C_{\underline i}
\]
the union of the hypercubes $C_{\underline i}$ lying  in
the interior of $\supp(\Probb)$.
Setting
\begin{eqnarray*}
\begin{split}
  \mathring \Probb_k  = \sum_{\substack{C_{\underline i} \subset \opSuppP}}
  \Probb(C_{\underline i} ) \, \U(C_{\underline i} )\quad &\mbox{ and }\quad 
  \mathring h_k &= \frac{d\mathring \Probb_k}{d\lambda_d} = h_k \ind{C^k},
\end{split}
\end{eqnarray*}
we have as above that 
\[
\mathring h_k \to h, \quad \lambda_d\mbox{-}\text{a.s.}\quad\text{ as } k\to +\infty.
\]
Consequently we also have
\[
\lim_{k\to\infty} \norm{\mathring h_k-h}_1 = 0 \quad\text{ and }\quad
\lim_{k\to\infty} \normdp{\mathring h_k} = \normdp{h}.
\]

We   get likewise by Proposition~\ref{prop:rateCubewise} that, for every
$k\in\N$,
\begin{equation}\label{eq:limit_ohk}
 \limn n^{1/d}\, d_{n,p}(\mathring \Probb_k) = \Qpn\cdot \normdp{\mathring
 h_k}^{\frac 1p}.
\end{equation}

\noindent $(a)$ Let $0 < \varepsilon < 1$ and $n \geq 2^d/\varepsilon$.
If we divide each edge of the hypercube $C$ into
\[
 m = \bigl\lfloor (\varepsilon n)^{1/d} \bigr\rfloor - 1
\]
intervals of equal length $l/m$, the interval endpoints define $m+1$ grid
points on each edge. Denoting by $\Gammaone = \Gammaone(\varepsilon, n)$ the product quantizer made up
by this procedure, we clearly have
\[
	\abs{\Gammaone} = (m+1)^d = \bigl\lfloor (\varepsilon n)^{1/d} \bigr\rfloor^d =:
	n_1.
\]
For this product quantizer it follows from  Proposition~\ref{prop:rappels} 
that, for all $\xi\in C$, 
 $$
  F^p(\xi; \Gammaone)  \leq C_{\norm{\cdot}} \sum_{i=1}^d
  \Big(\frac{l}{2m}\Big)^p  \leq C_{\norm{\cdot},p,d}\, \frac{l^p}{(\varepsilon n)^{\frac pd}}.
$$


For $n_2 = \lfloor(1-\varepsilon)n\rfloor$ let $\Gammatwo$ be an $n_2$-quantizer
such that $d^p(\Probb_k; \Gammatwo) \leq (1+\varepsilon) d^p_{n_2}(\Probb_k)$.
We clearly
have $\abs{\Gammaone \cup \Gammatwo}\leq n$ and
\begin{eqnarray*}
\begin{split}
 \hskip -0.5cm  n^{\frac pd} \biggabs{\int  F^p(\xi; \Gammaone \cup \Gammatwo) 
  d\Probb_k(\xi) - \int F^p(\xi; \Gammaone \cup \Gammatwo) d\Probb(\xi) } & \leq n^{\frac pd} \int F^p(\xi; \Gammaone \cup \Gammatwo) \abs{ h_k(\xi) - h(\xi) } d\lambda_d\xi\\ & \leq  C_{\norm{\cdot},p,d}\, \frac{l^p}{\varepsilon^{\frac pd}} \norm{h_k -
  h}_1= c_{1,\varepsilon} \norm{h_k -  h}_1 
\end{split}
\end{eqnarray*}
for $k\in\N$ and $n \geq \max\Bigl\{\frac{2^d}{\epsilon}, \frac{1}{1-\varepsilon}
\Bigr\}$. This implies
\begin{eqnarray*}
\begin{split}
  n^{\frac pd} d_n^p (\Probb) & \leq  n^{\frac pd} \int F^p(\xi; \Gammaone \cup
  \Gammatwo) d\Probb(\xi)\\ 
  &  \leq n^{\frac pd} \int F^p(\xi; \Gammaone \cup \Gammatwo) d\Probb_k(\xi) +
  c_1 \norm{h_k - h}_1\\ 
  & \leq n^{\frac pd} \int F^p(\xi; \Gammatwo) d\Probb_k(\xi) + c_1 \norm{h_k -
  h}_1\\ 
  & \leq (1+\varepsilon)\, n^{\frac pd} d_{n_2}^p(\Probb_k) + c_{1,\varepsilon}
  \norm{h_k - h}_1,
\end{split}
\end{eqnarray*}
so that we can conclude from~(\ref{eq:limit_hk}) that
\[
\limsn n^{\frac pd} d_{n}^p (\Probb) \leq
\frac{1+\varepsilon}{(1-\varepsilon)^{\frac pd}} (\Qpn)^p \normdp{h_k} +
c_{1,\varepsilon} \norm{h_k - h}_1.
\]

Letting first $k$ go to infinity and then letting $\varepsilon$ go to zero yields
\[
\limsn n^{1/d} d_n^p(\Probb) \leq \Qpn \normdp{h_k}^{\frac 1p}.
\]

\noindent $(b)$ Assume now  that $\Gammathree$ is an $n_2$-quantizer such that $\bar
d^p(\Probb;\Gammathree) \leq (1+\varepsilon) \,\bar d^p_{n_2}(\Probb)$.
%
%
Again it holds $\abs{\Gammaone \cup \Gammathree} \leq n$ and
we derive as above
\begin{equation}\label{eq:boundOpenhk}
  n^{\frac pd} \biggabs{\int  F^p(\xi; \Gammaone \cup \Gammathree) 
  d\mathring\Probb_k(\xi) - \int F^p(\xi; \Gammaone \cup \Gammathree)
  d\Probb(\xi) } \leq c_{2,\varepsilon} \norm{\mathring h_k - h}_1.
\end{equation}

Moreover, Lemma~\ref{lem:radius} yields for every $k\in\N$ the existence of
$n_{k,\varepsilon}\in\N$ such that, for all $n\geq n_{k,\varepsilon}$,
\begin{eqnarray*}
\begin{split}
 (1+\varepsilon)\, \bar d_{n_2}^p(\Probb) & \geq \bar d^p(\Probb; \Gammathree)
 \geq \int_{\conv(\Gammathree)} F^p(\xi; \Gammathree) d\Probb(\xi)\\
  & \geq \int_{C^k}
  F^p(\xi; \Gammathree) d\Probb(\xi) \geq \int_{C^k}
  F^p(\xi; \Gammaone \cup \Gammathree) d\Probb(\xi).
\end{split}
\end{eqnarray*}
Thus, we derive from~(\ref{eq:boundOpenhk}) that, for every $n\geq
\max\Big(n_{k,\varepsilon}, \frac{2^d}{\varepsilon}, \frac{1}{1-\varepsilon} \Big)$,
\begin{eqnarray*}
\begin{split}
  (1+\varepsilon)\,n^{\frac pd}\, \bar d_{n_2}^p(\Probb) & \geq n^{\frac pd}
  \int_{C^k} F^p(\xi; \Gammaone \cup \Gammathree) d\Probb(\xi)  \\
  & \geq n^{\frac pd} \int_{C^k}  F^p(\xi; \Gammaone \cup \Gammathree)
  d\mathring \Probb_k(\xi) - c_{2,\varepsilon} \norm{\mathring h_k - h}_1\\
  & \geq n^{\frac pd} d_n^p(\mathring \Probb_k) - c_{2,\varepsilon} \norm{\mathring h_k -
  h}_1,
\end{split}
\end{eqnarray*}
which yields, once  combined with~(\ref{eq:limit_ohk}),
\[
\frac{1+\varepsilon}{(1-\varepsilon)^{\frac pd}} \limin n_2^{\frac pd} \, \bar
d_{n_2,p}^p(\Probb) \geq  \Qpn \normdp{\mathring h_k}- c_{2,\varepsilon}
\norm{\mathring h_k - h}_1.
\]
Letting first $k$ go to $\infty$ and then letting $\varepsilon$ go to $0$, we get
\[
\limin n^{\frac 1d} \, \Dqbpn(\Probb) \geq  \Qpn
\normdp{h}^{\frac 1p}.\hfill \Box
\]

\begin{prop}[Singular distribution]\label{prop:rateSingular}
Assume that $\Probb$ is singular with respect to $\lambda_d$ and has compact
support. Then
 \[
  \limsn n^{\frac pd}\, \bar d_{n,p}(\Probb) = 0.
 \]

\end{prop}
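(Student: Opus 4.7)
The singularity $\Probb\perp\lambda^d$ lets us concentrate almost all the $\Probb$-mass on a compact set of vanishing Lebesgue measure, which I cover by finitely many small hypercubes and quantize with a product grid inside each. A coarse background product grid on a bounding cube $C\supseteq\supp(\Probb)$ absorbs the residual $\varepsilon$-mass and forces $\supp(\Probb)\subset\conv(\Gamma)$, eliminating nearest-neighbour tails in $\dqbp$.

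Fix $\varepsilon\in(0,1)$. Using inner regularity of $\Probb$ and outer regularity of $\lambda^d$, I pick a compact set $K\subset\supp(\Probb)$ with $\Probb(K)\geq 1-\varepsilon$ and $\lambda^d(K)=0$, then a finite family of closed hypercubes $C_1,\ldots,C_M$ of edge-lengths $\ell_1,\ldots,\ell_M$ covering $K$ with $\sum_{i=1}^M \ell_i^d\leq \varepsilon$. Set $A_i=C_i\setminus\bigcup_{j<i}C_j$, so that $\sum_i\Probb(A_i)\leq 1$, and fix $C=a+L[0,1]^d\supseteq\supp(\Probb)$. Form $\Gamma=\Gamma_0\cup\bigcup_{i=1}^M\Gamma_i$, where $\Gamma_0$ is the product grid on $C$ with $\lfloor n/2\rfloor$ nodes and $\Gamma_i$ is the product grid on $C_i$ with $(q_i+1)^d$ nodes, the integers $q_i$ being chosen by the standard Lagrange-multiplier rule $q_i\propto(\Probb(A_i)\ell_i^p)^{1/(d+p)}$ and rescaled so that $\sum_i(q_i+1)^d\leq \lfloor n/2\rfloor$; thus $\abs{\Gamma}\leq n$.

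Since $\conv(\Gamma_0)\supseteq C\supseteq\supp(\Probb)$, we have $\dqbp(\Probb;\Gamma)=\int F^p(\xi;\Gamma)\,d\Probb(\xi)$. Propositions~\ref{prop:PtsInsertion} and~\ref{prop:productConstruction} give $F^p(\xi;\Gamma)\leq C_{d,\norm{\cdot}}(\ell_i/2)^p q_i^{-p}$ on each $A_i$ and $F^p(\xi;\Gamma)\leq C_{d,\norm{\cdot}}L^p(n/2)^{-p/d}$ on $C\setminus\bigcup_i C_i$. The discrete H\"older inequality with conjugate exponents $(d+p)/d$ and $(d+p)/p$ yields
\[
\Bigl(\sum_{i=1}^M(\Probb(A_i)\ell_i^p)^{d/(d+p)}\Bigr)^{(d+p)/d}\leq\Bigl(\sum_i\Probb(A_i)\Bigr)\Bigl(\sum_i\ell_i^d\Bigr)^{p/d}\leq \varepsilon^{p/d},
\]
so the optimal allocation of the $q_i$'s produces $\sum_i\Probb(A_i)\ell_i^p q_i^{-p}\leq 2^{p/d}\,n^{-p/d}\,\varepsilon^{p/d}$. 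Assembling both contributions gives $n^{p/d}\dqbpn(\Probb)\leq c_d\,\varepsilon^{p/d}+c'_{d,L}\,\varepsilon$; sending first $n\to\infty$ and then $\varepsilon\downarrow 0$ finishes the proof.

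The main obstacle is the failure of sub-linearity for $\dqbp$ (see the remark after Proposition~\ref{prop:subLin}): the residual $\varepsilon$-mass of $\Probb$ outside the small cover cannot simply be neglected and would otherwise contribute an $n$-independent term of order $\varepsilon\cdot\mathrm{diam}(\supp(\Probb))^p$. Dedicating half of the grid points to the coarse background grid $\Gamma_0$ on $C$ circumvents this by providing a uniform $O(n^{-p/d})$ bound on $F^p$ over the whole support, at the price of a harmless factor $2^{p/d}$.
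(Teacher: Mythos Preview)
Your proof is correct and follows essentially the same strategy as the paper's: cover a $\lambda^d$-small set carrying (almost) all of $\Probb$ by finitely many small hypercubes, place a product grid on each together with a coarse background grid on a bounding cube $C\supseteq\supp(\Probb)$, and split the $n$ points roughly in half between the two tasks. The only differences are cosmetic --- you extract a compact $K$ with $\Probb(K)\ge 1-\varepsilon$ while the paper truncates a countable half-open-cube cover of the full null set, and you allocate the $q_i$ via the Lagrange rule (needing a H\"older step), whereas the paper takes the simpler choice $n_k\propto\ell_k$, which makes $\ell_k/n_k$ constant in $k$ and bypasses H\"older entirely.
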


\begin{proof} 
Let $A$  be a Borel set such that $\Probb(A)=1$
and $\lambda_d(A)=0$. Let $\varepsilon>0$; by the outside regularity of
$\lambda_d$, there exists an open set $O=O(\varepsilon)\supset  A$ such that
$\lambda_d(O)\le \varepsilon$ (and $\Probb(O)=1$). Let $C$ be an open hypercube with
edges parallel to the coordinate axis, edge-length $\ell$ and  containing the closure of
$A$.

Let $C_k= \prod_{i=1}^d [c_{k,i}, c_{k,i}+\ell_i)$, $k\!\in \N$, be a countable
partition of $O$ consisting of nonempty half-open hypercubes, still with edges
parallel to the coordinate axis (see,  $e.g.$ Lemma 1.4.2 in~\cite{COH}).

Let $m=m(\varepsilon)\!\in \N$ such that $\displaystyle \sum_{k\ge m+1}\Probb(C_k)\le
\varepsilon^{\frac pd} \ell^{-p}$.

Let $n\!\in \N$, $n\ge 2^{d+1}$ and let $n_1, \ldots, n_d\ge 2$ be integers such
that the product $n^d_1 + \cdots + n_m^d \le n/2$. One designs a grid $\Gamma$
as follows.

For every $k\!\in \{1,\ldots,m\}$, we consider the lattice of $C_k$ of size $n_i
^d$ defined by 
\[ 
\Prod_{i=1}^d \Bigl\{c_{k,i}+ \frac{r_i}{n_k-1}\ell_i,\,
r_i=0,\ldots, n_k-1,\, i=1,\ldots, d\Bigr\}. 
\]

Then, one defines likewise the lattice of $C$ of size $n_{m+1}^d \le n/2$ 
\[
\Prod_{i=1}^d \Bigl\{c_{k,i}+ \frac{r_i}{n_{m+1}-1}\ell_i,\, r_i=0,\ldots,
n_{m+1}-1,\, i=1,\ldots,d\Bigr\}. 
\] 
The grid $\Gamma$ is made up with all the points
of the $m+1$ above finite lattices.

Now let $\xi \!\in A$. It is clear from the definition of the function $F_p$
that 
\[ F_p(\xi; \Gamma) \le \left\{\begin{array}{ll}
C_{\norm{.}} \big(\ell_k/n_k\big)^p& \mbox{if } \; \xi \!\in \bigcup_{k=1}^mC_k \\
C_{\norm{.}} \big(\ell/n_{m+1}\big)^p& \mbox{if } \; \xi \!\in C \setminus 
\bigcup_{k=1}^mC_k        \end{array}\right. 
\] where $C_{\norm{.}} >0$ is a
real constant only depending on the norm. As a consequence
\begin{eqnarray*}
d_{n}^p(\Probb)&=& \sum_{k=1}^m \int_{C_k}F^p(\xi;\Gamma)d\Probb(\xi) +\int_{C \setminus  \bigcup_{k=1}^mC_k } F^p(\xi; \Gamma)d\Probb(\xi)\\
&\le& C_{\norm{.}}  \Big( \sum_{k=1}^m (\ell_k/n_k)^p\Probb(C_k) + (\ell/n_{m+1})^p\Probb (C \setminus  \bigcup_{k=1}^mC_k )\Big).
\end{eqnarray*}

Set for every $k \!\in \{1,\ldots,m\}$, $\displaystyle n_k =\left \lfloor \frac{\ell_k(n/2)^{\frac 1d}}{(\sum_{k'=1}^d\ell_{k'}^d)^{\frac 1d}}\right \rfloor$ 
and $\displaystyle n_{m+1} = \lfloor (n/2)^{\frac 1d} \rfloor$. Note that 
\[
\sum_{k'=1}^d\ell_{k'}^d= \sum_{k=1}^m \lambda_d(C_k) \le \lambda_d(O)\le \varepsilon.
\]
Elementary computations show that for large enough $n$,  all the integers $n_k$ are greater
than $1$ and that 
\begin{eqnarray*} 
\sum_{k=1}^m (\ell_k/n_k)^p\Probb(C_k) + (\ell/n_{m+1})^p\Probb(C \setminus  \bigcup_{k=1}^mC_k ) & \le & (\sum_{k'=1}^d\ell_{k'}^d)^{\frac pd}(n/2)^{-\frac pd} \Probb\big(\cup_{1\le k\le m} C_k\big) + \\
&& +(n/2)^{-\frac pd} \ell^p \Probb\big(C \setminus  \bigcup_{k=1}^mC_k \big)\\ 
\end{eqnarray*}
so that \[ \limsup_n n^{\frac pd} d_{n}^p(\Probb) \le C_{\norm{.}} (\varepsilon/2)^{\frac pd}\]
which in turn
implies, by letting $\varepsilon $ go to $0$, that $\displaystyle \limsup_n n^{\frac pd}
d_{n}^p(\Probb)=0$. 
\end{proof}

 {\sc Proof of Theorem~\ref{thm:DQRate}:}
 Claim~(a) follows directly from Propositions~\ref{prop:rateCompact} , 
~\ref{prop:rateSingular} and  Proposition~\ref{prop:subLin}: Assume $\Probb = \rho \Probb_a +(1-\rho)\Probb_s$ where $\Probb_a= \frac {h}{\rho}\lambda_d$ and $\Probb_s$ denote the absolutely continuous and singular part of $\Probb$ respectively. The following inequalities hold true 
\[
 \rho \bar d_{n,p} (\Probb_a)\le \bar d_{n,p}(\Probb)\le \rho \bar d_{n_1,p}(\Probb_a)+(1-\rho) \bar d_{n_2,p}(\Probb_s)
 \]
 for every triplet of integers $(n_1,n_2,n)$ with $n_1+n_2\le n$. Set $n_1= n_1(n)= \lfloor (1- \varepsilon n) \rfloor$, $n_2= n_2(n)= \lfloor  \varepsilon n \rfloor$. Then we derive that 
 \[
\hskip -1cm\rho  \Qpn\cdot \normdp{\frac{h}{\rho} }^{\frac 1p} \liminf_n n^{\frac p d}\bar d_{n,p} (\Probb_a)\le  \liminf_n n^{\frac p d}\bar d_{n,p} (\Probb)\le  \limsup_n n^{\frac p d}\bar d_{n,p} (\Probb)\le \rho (1-\varepsilon)^{-\frac p d}  \Qpn\cdot \normdp{\frac{h}{\rho}}^{\frac 1p}
 \]
 Letting $\varepsilon $ go to $0$ completes the proof.
 
 Furthermore, part $(c)$ 
 was derived in~\cite{dualStat}, Section~5.1. 
 Hence, it remains to prove $(b)$ 
  \begin{proof} {\sc Step 1.} (Lower bound)
If $X$ is compactly supported, the assertion follows from Proposition
\ref{prop:rateCompact}. Otherwise, set for every $R\!\in(0,\infty)$, 
$$
C_{_R} = [-R,R]^d\; \mbox{ and }\; \Probb(\cdot | C_k) = \frac{h\ind{C_k}}{\Probb(C_k)} \lambda_d,\; k\in\N.
$$
Proposition~\ref{prop:rateCompact} yields again
\begin{equation}\label{eq:limitCk}
  \limn n^{\frac 1d} \, \bar d_{n,p}(\Probb(\cdot | C_k)) =  \Qpn \cdot
\normdp{h\ind{C_k}/\Probb(C_k)}^{\frac 1p},
\end{equation}
so that $\dqbpn(\Probb) \geq \Probb(\cdot | C_k) \dqbpn(\Probb(\cdot |
C_k))$ implies for all $k\in\N$
\[
\limin n^{\frac 1d} \, \bar d_{n,p}(\Probb) \geq  \Qpn \cdot
\normdp{h\ind{C_k}}^{\frac 1p}.
\]
Sending $k$ to infinity, we get at
\[
\limin n^{\frac 1d} \, \bar d_{n,p}(\Probb) \geq  \Qpn \cdot
\normdp{h}^{\frac 1p}.
\]

\noindent  {\sc Step 2} ({\em Upper bound}, ${\rm supp}(\Probb)= \R^d$). Let
$\rho\!\in(0,1)$. Set $K=C_{k+\rho}$ and $K_{\rho}= C_k$. Let
$\Gamma_{k,\alpha,\rho}$ be the lattice grid associated to $K\setminus K_{\rho}$
with edge $\alpha>0$ as defined  in the proof of Proposition~\ref{prop:rateCubewise}.
It is straightforward that there exists a real constant $C>0$ such that
\[
\forall\, k\in\N, \forall\, \rho\in(0,1), \forall\,\alpha\in(0,\rho):\quad
|\Gamma_{\alpha,\rho}|\le C d\rho k^{d-1}\alpha^{-d}  .
\] 
Let $\varepsilon\!\in (0,1)$. For every $n\ge 1$, set $\alpha_n = \tilde
\alpha_0 n^{-\frac 1d}$ where $\tilde \alpha_0\in(0,1)$ is a real constant and
\[ 
n_0= |\Gamma_{k,\alpha_n,\rho}|,\quad n_1=\lfloor
(1-\varepsilon)(n-n_0)\rfloor,\quad n_2= \lfloor \varepsilon (n-n_0)\rfloor, 
\]
so that $\alpha_n\in(0.\rho)$, $n_0+n_1+n_2\le n$ and $n_i\ge1$ for large enough
$n$. 

\smallskip For every $\xi\!\in K_{\rho}=C_k$, for every grid $\Gamma\subset
\R^d$ containing $K_{\rho}$, we know by the ``firewall" 
Lemma~\ref{lem:firewall} that
\[ 
F^p(\xi;(\Gamma\cap
\mathring{K})\cup\Gamma_{\alpha,\rho})\le (1+\eta)^p F^p(\xi;\Gamma)
+(1+\eta)^p(1+1/\eta)^pC_{\norm{.}}\alpha^p. 
\]

Let $\Gammaone=\Gammaone(n_1,k)$ be an $n_1$ quantizer such that
$d^p_{n_1}(\Probb(.|C_k); \Gammaone) \leq (1+\eta) d^p_{n_1}(\Probb(.|C_k))$.
Set $\Gammaone'=((\Gammaone\cap \mathring C_{k+\rho})\cup
\Gamma_{k,\alpha_n,\rho})$. One has $\Gammaone'\subset C_{k+2\rho}$ for large
enough $n$ (so that $\alpha_n<\rho$).

Let moreover $\Gammatwo=\Gammatwo(n_2,k)$ be an $n_2$ quantizer such that
$\bar d^p_{n_2}(\Probb(.|C_k^c); \Gammatwo) \leq (1+\eta)
\bar d^p_{n_2}(\Probb(.|C_k^c))$. 
For $n\ge n_{\rho}$, we may assume that
$C_{k+2\rho}\subset \conv{\Gammatwo}$ owing to Lemma~\ref{lem:radius} since
$C_{k+2\rho} =\conv(C_{k+2\rho}\setminus C_{k+\frac 32\rho})$ and
$C_{k+2\rho}\setminus C_{k+\frac 32\rho}\subset \mathring{\overbrace{{\rm supp}
\Probb(.|C_k^c)}}$.
As a consequence $\Gammaone'\subset \conv(\Gammatwo)$ so that
$\conv(\Gammaone')\subset \conv (\Gammatwo)=\conv(\Gamma)$ where
$\Gamma=\Gammaone'\cup \Gammatwo$and \[ C_{k+\rho}\subset
\conv(\Gamma)=\conv(\Gammatwo). 
\] 
Now
\begin{eqnarray*}
\bar d_{n}^p(\Probb) & \le & \int_{C_k}\Big(F^p(\xi;\Gamma)\mbox{\bf
1}_{\{\xi \in \conv (\Gammatwo)\}}+ \underbrace{ d(\xi,\Gamma)^p\mbox{\bf
1}_{\{\xi \notin \conv (\Gammatwo)\}} }_{=0}\Big) d\Probb(\xi)\\ &&+
\int_{C^c_k}\left(F^p(\xi;\Gamma)\mbox{\bf 1}_{\{\xi \in \conv (\Gammatwo)\}}+
d(\xi,\Gamma)^p \mbox{\bf 1}_{\{\xi \notin \conv
(\Gammatwo)\}}\right)d\Probb(\xi).
\end{eqnarray*}
Using that, for every $\xi\!\in C_k$, 
\begin{eqnarray*}
F^p(\xi;\Gamma) &\le& F^p(\xi;\Gammaone')\\
&\le& (1+\eta)^p\Big(F^p(\xi;\Gammaone)+
(1+1/\eta)^p\,C_{\norm{.}}\,\alpha_n^p\Big)
\end{eqnarray*}
implies
\begin{eqnarray*}
\bar d_{n}^p(\Probb) & \le & \Probb(C_k) (1+\eta)^p\Big(
(1+\eta)\,d^p_{n_1}(\Probb(.|C_k)) + (1+1/\eta)^p\,C_{\norm{.}}\,\tilde
\alpha_0 \,n^{-\frac 1d} \Big)\\
 && + \Probb(C^c_k)\,(1+\eta)\, \bar d^p_{n_2}(\Probb(.|C^c_k)).
 \end{eqnarray*}
 Consequently
 \begin{eqnarray*}
n^{\frac pd} \bar d_{n}^p(\Probb) & \le & \Probb(C_k)
(1+\eta)^p\Big[(1+\eta)\, \Big(\frac{n}{n_1}\Big)^{\frac pd}\,n_1^{\frac
pd}\,d^p_{n_1}(\Probb(.|C_k)) 
+ (1+1/\eta)^pC_{\norm{.}}\tilde \alpha_0 \Big]\\ 
 && + (1+\eta)\,
\Big(\frac{n}{n_2}\Big)^{\frac pd}\Probb(C^c_k)\, n_2^{\frac pd}\,\bar
d_{n_2}^p(\Probb(.|C^c_k))
 \end{eqnarray*}
 which in turn implies, using Proposition~\ref{prop:rateCompact} for the modulus $d_{n,p}$ and the $d$-dimensional version of the extended Pierce
 Lemma (Proposition~\ref{PdtQErrop}) for $\bar d_{n,p}$,
 \begin{eqnarray*}
 \limsup_nn^{\frac pd} \bar d_{n}^p(\Probb)  &\le&  \Probb(C_k)
 (1+\eta)^p\left(\left(\frac{(1+\eta)^{-p/d}}{(1-\varepsilon)(1-Cd\rho
 k^{d-1}\tilde \alpha^{-d}_0)}\right)^{\frac pd}Q^{dq}_{\norm{.}}\norm{h\mbox{\bf 1}_{C_k}}_{L^{\frac{d}{d+p}}}\right.\\ &&\left. 
 +  (1+1/\eta)^pC_{\norm{.}}\tilde \alpha_0 \right)\\ 
 &&+ \Probb(C^c_k) \,(1+\eta)\,\,C_{p,d}\,\norm{X \mbox{\bf 1}_{\{X\in C^c_k\}}
 }^p_{L^{p+\delta}} \left(\frac{1}{\varepsilon(1-Cd\rho k^{d-1}\tilde \alpha^{-d}_0)}\right)^{\frac pd}.
 \end{eqnarray*}
 One concludes by letting successively $\rho$, $\tilde \alpha_0$, $\eta$ go
 to $0$,   $k \to \infty$ and finally $\varepsilon$ to  $0$.

\medskip
\noindent {\sc Step 3.} (Upper bound: general case). Let $\rho\!\in (0,1)$. Set
$\Probb_{\rho}= \rho \Probb+(1-\rho)\Probb_0$ where $\Probb_0= {\cal N}(0;I_d)$
($d$-dimensional normal distribution). It is clear from the very definition of
$\bar d_{n,p}$ that $\bar d_{n,p}(\Probb)\le \frac{1}{\rho} \bar
d_{n,p}(\Probb_{\rho})$ since $\Probb\le \frac{1}{\rho} \Probb_{\rho}$. The
distribution $\Probb_{\rho}$ has $h_{\rho}= \rho h+(1-\rho)h_0$ as a density (with obvious
notations) and one concludes by noting that 
\[ 
\lim_{\rho\to
0}\|h_{\rho}\|_{d/(d+p)}= \|h\|_{d/(d+p)} 
\] 
owing to the Lebesgue dominated convergence Theorem.
\end{proof}

{\em Proof of Proposition~\ref{prop:asymptQ}:}
Using H\"older's inequality one easily checks that for $0\leq r\leq p$ and
$x\in\R^d$ it holds
\[
	\abs{x}_{\ell^r} \leq d^{\frac{1}{r} - \frac{1}{p}}\, \abs{x}_{\ell^p}.
\]
 
Moreover, for $m\in\N$ set $n = m^d$ and let $\Gamma'$ be an optimal quantizer
for $d_{m,p}(\Unif)$ (or at least $(1+\varepsilon)$-optimal for $\varepsilon > 0$).
Denoting $\Gamma = \prod_{i=1}^d \Gamma'$, it then follows from Proposition~\ref{prop:rappels}$(b)$ that
\[
n^\frac{p}{d}\, d^p_n\big(\Unifd\big) \leq n^\frac{p}{d}\, d^p\big(\Unifd;\Gamma\big) = m^p
	\sum_{i=1}^d d^p\big(\Unif; \Gamma'\big)  = d\, m^p \,d^p_m\big(\Unif\big).
\]

Combining both results and reminding that $\Qpn$ holds as an infimum, we obtain
for $r\!\in [0,p]$,  
\[
\bigl(Q^{\text{dq}}_{\abs{\cdot}_{\ell^r}, p, d}\bigr)^p \leq d^{\frac{p}{r}-1}
\, n^{\frac{p}{d}}\, d^p_{n, \abs{\cdot}_{\ell^p}}(\Unifd) \leq d^{\frac{p}{r}}
\, m^p \,d^p_m\big(\Unif\big),
\]
which finally proves the assertion by sending $m\to+\infty$.$\qquad \Box$

\medskip
\section{Concluding  remarks and prospects} This result does not complete the theoretical investigations about dual quantization (beyond the existence of optimal dual quantizers in the case $p=1$, left open in~\cite{dualStat}): the first one is to elucidate the asymptotic behaviour of the constant $Q^{dq}_{\norm{.},p,d}$ coming out in Theorem~\ref{thm:DQRate} as $d$ goes to infinity,  most likely by showing that  $\lim_{d\to +\infty} \frac{Q^{dq}_{\norm{.},p,d}}{Q^{vq}_{\norm{.},p,d}}=1$. From a practical point of view, is it possible to evaluate the mean dual quantization error induced by an optimal Voronoi quantization grid? An answer to that question would be very valuable for applications since many optimal quantization grids have been computed for various distributions (see $e.g.$~\cite{Website} for Gaussian distributions).

Many natural questions  solved  in the optimal Voronoi quantization theory remain open. Among others  ``Is there  a counterpart to the empirical measure theorem for (asymptotically) optimal quantizers?" (see~Theorem~7.5, p.96~in~\cite{Foundations})? ``How does dual  quantization  behave with respect to empirical distribution of i.i.d. $n$-samples of a given distribution?".  Is it possible to develop an infinite dimensional ``functional" dual quantization?

\appendix
\section{Numerical results for $\bar d_{n,2}(X)^2$}\label{app:num}
In order to support the heuristic argumentation on the intrinsic and rate optimal growth limitation of the truncation error $\Probb\big(X\notin C_{n}\big)$ induced by the extended dual quantization error modulus,
we consider the two dimensional random variable 
\[
	X = (W_T, \sup_{0\leq t \leq T}W_t),
\]
where $(W_t)_{0\leq t \leq T}$ is a Brownian Motion.

This example is motivated by the pricing of exotic options, where this joint distribution plays an important role.

Using a variant of the CVLQ algorithm (see \cite{dualStat}) adapted for the dual quantization modulus inside $C_n$ and the nearest neighbor mapping outside, we have computed a sequence of optimal grids together with the squared dual quantization error $\bar d_{n,2}(X)^2$ and the truncation error $\Probb\big(X\notin C_{n}\big)$.

These results are reported in Table~\ref{tab:num}. 

\begin{table}[h!]
\centering
\label{tab:num}
\caption{Numerical results for the dual quantization $X$}
\begin{tabular}{l|c|c}
$n$ & $\bar d_{n,2}(X)^2$ & $\Probb\big(X\notin C_{n}\big)$ \\
\hline
50 & 0.04076 & 0.01784\\
100 & 0.01966 & 0.00795 \\
150 & 0.01236 & 0.00412 \\
200 & 0.00931 & 0.00141 \\
\end{tabular}
\end{table}

Furthermore we see in figure \ref{fig:num} a log-log plot for the convergence of the two rates $\bar d_{n,2}(X)^2$ and $\Probb\big(X\notin C_{n}\big)$.

\begin{figure}[h!]
\includegraphics{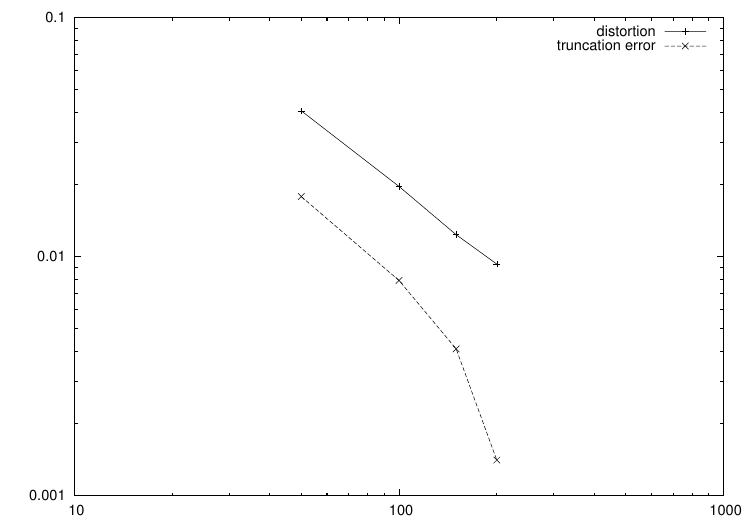}
\caption{log-log plot of $\bar d_{n,2}(X)^2$ and $\Probb\big(X\notin C_{n}\big)$ with respect to the grid size $n$}
\label{fig:num}
\end{figure}

The distortion rate $\bar d_{n,2}(X)^2$ shows here an absolute stable convergence rate (least-squares fit of exponent yields $-1.07192$) which is consistent with the theoretical optimal rate of $n^{-\frac{2}{d}}$.
Moreover, the truncation error $\Probb\big(X\notin C_{n}\big)$ outperforms also in this case the heuristically derived rate of $n^{-1}$ and also outperforms the squared "inside" quantization error, which means that also for such an un-symmetric and non-spherical distribution of the Brownian motion and its supremum, an second order rate can be achieved.

This confirms again the motivation of the extended dual quantization error as the correction penalization constraint on growth of the convex hull in order to preserve second order stationarity.

\small
\bibliography{myliterature}

\end{document}